\newtheorem{theorem}{Theorem}[subsection]
\numberwithin{equation}{theorem}
\newtheorem{lemma}[theorem]{Lemma}
\newtheorem{cor}[theorem]{Corollary}
\newtheorem{prop}[theorem]{Proposition}
\theoremstyle{definition}
\newtheorem{defn}[theorem]{Definition}
\newtheorem{example}[theorem]{Example}
\newtheorem{convention}[theorem]{Convention}
\newtheorem{remark}[theorem]{Remark}
\newtheorem{hypothesis}[theorem]{Hypothesis}
\newtheorem{notation}[theorem]{Notation}
\newtheorem{question}[theorem]{Question}
\def\AAA{\mathbb{A}}
\def\FF{\mathbb{F}}
\def\PP{\mathbb{P}}
\def\QQ{\mathbb{Q}}
\def\RR{\mathbb{R}}
\def\ZZ{\mathbb{Z}}
\newcommand{\calE}{\mathcal{E}}
\newcommand{\calF}{\mathcal{F}}
\newcommand{\calG}{\mathcal{G}}
\newcommand{\calL}{\mathcal{L}}
\newcommand{\calO}{\mathcal{O}}
\newcommand{\gothm}{\mathfrak{m}}
\newcommand{\gotho}{\mathfrak{o}}
\newcommand{\Xbar}{\overline{X}}
\newcommand{\Ybar}{\overline{Y}}
\newcommand{\del}{\partial}
\def\dual{\vee}
\def\bv{\mathbf{v}}
\DeclareMathOperator{\alg}{alg}
\DeclareMathOperator{\Frac}{Frac}
\DeclareMathOperator{\Gal}{Gal}
\DeclareMathOperator{\GL}{GL}
\DeclareMathOperator{\Ind}{Ind}
\DeclareMathOperator{\Maxspec}{Maxspec}
\DeclareMathOperator{\rank}{rank}
\DeclareMathOperator{\Res}{Res}
\DeclareMathOperator{\sep}{sep}
\DeclareMathOperator{\Spec}{Spec}
\DeclareMathOperator{\Spf}{Spf}
\DeclareMathOperator{\Swan}{Swan}
\begin{document}

\title{Swan conductors for $p$-adic differential modules, II: 
Global variation}
\author{Kiran S. Kedlaya \\ Department of Mathematics, Room 2-165 \\ 
Massachusetts
Institute of Technology \\ 77 Massachusetts Avenue \\
Cambridge, MA 02139 \\
\texttt{kedlaya@mit.edu}}
\date{November 24, 2008}

\maketitle

\begin{abstract}
Using a local construction from a previous paper, we
exhibit a numerical invariant, the differential Swan conductor, 
for an 
isocrystal on a variety over a perfect field of positive characteristic
overconvergent
along a boundary divisor; this leads to an analogous construction for
certain $p$-adic and $\ell$-adic
representations of the \'etale fundamental group of 
a variety. We then demonstrate some variational properties
of this definition for overconvergent isocrystals, paying
special attention to the case of surfaces.
\end{abstract}

\section*{Introduction}

This paper is a sequel to \cite{kedlaya-swan1}, which defines a numerical
invariant, called the differential Swan conductor, for certain
differential modules on a rigid  analytic annulus over a $p$-adic field.
In that paper, the key application of the construction is the definition
of a sensible numerical invariant for
 Galois representations with finite local monodromy over an complete
discretely valued field of equal characteristic, without any assumption
of perfectness of the residue field.

In this paper, we adopt a more geometric viewpoint, taking the construction
back to its roots in the theory of $p$-adic cohomology.
We define differential
Swan conductors for an overconvergent isocrystal
on a variety over a perfect field of positive characteristic.
The definition depends on the choice of a boundary divisor along which one
measures the conductor;
we are particularly interested in understanding how the conductor can vary
as a function of this boundary divisor.
We give special attention to the
case of surfaces; one of the variational properties 
loosely resembles subharmonicity for functions on Berkovich analytic curves,
in the sense of Thuillier \cite{thuillier}.
Another resembles a semicontinuity property of \'etale cohomology 
\cite{laumon}. 

The variational properties of differential Swan conductors seem analogous
to properties of the irregularity
of a holomorphic differential module on a complex surface along a 
boundary divisor; indeed, a complex analogue of the semicontinuity
property mentioned above has recently been proved by Andr\'e
\cite{andre}, extending an old result of Deligne (see \cite{mebkhout}).
Variation of the boundary divisor has been studied in that
setting by
Sabbah \cite{sabbah}; our study was motivated by questions
in the $p$-adic realm
analogous to Sabbah's questions about the Stokes phenomenon. These arise
in the study of semistable reduction for overconvergent $F$-isocrystals,
which is the subject of an ongoing series of papers by the author
\cite{kedlaya-part1, kedlaya-part2, kedlaya-part3, kedlaya-part4}; 
in fact, some of the constructions 
used
in \cite{kedlaya-swan1} and in this paper already appear 
in \cite{kedlaya-part3}.
We have begun carrying these techniques over to Sabbah's setting
\cite{kedlaya-goodformal1}.

As in \cite{kedlaya-swan1}, there is
a mechanism for converting certain $p$-adic representations of the
\'etale fundamental group of a smooth variety into
$F$-isocrystals.
This makes it possible to define differential Swan conductors,
and (with some effort) to 
prove some of the corresponding properties, also for
lisse $\ell$-adic \'etale sheaves.

We end this introduction by cautioning that
this paper is not intended to be read
independently from \cite{kedlaya-swan1}. In particular, we freely use
notation and terminology introduced in \cite{kedlaya-swan1}, without
explicit reintroduction except in a few places for emphasis.

\subsection*{Acknowledgments}
Thanks to Yves Andr\'e, Matt Baker and Mattias Jonsson (independently), 
and Kazuhiro Fujiwara
for directing us to the work of Sabbah, Thuillier, and Kato,
respectively.
Thanks also to Liang Xiao for helpful feedback.
Some of this material was first presented at the Hodge Theory conference at
Venice International University in June 2006; that presentation 
was sponsored by the Clay Mathematics Institute.
The author was additionally supported by NSF grant DMS-0400727,
NSF CAREER grant DMS-0545904, a Sloan Research Fellowship,
and the NEC Research Support Fund.

\section{Relative annuli}
\label{sec:relative}

In this section, we gather some facts about the rigid geometry of
relative annuli (products of annuli with other spaces), 
in the vein of \cite[\S 3]{kedlaya-part1}.

\setcounter{theorem}{0}
\begin{hypothesis} \label{H:rational type}
Throughout this paper,
let $K$ be a complete nonarchimedean field of characteristic $0$
equipped with $m$ commuting continuous derivations 
$\del_1,\dots,\del_m$. Assume that $K$ is of \emph{rational type}
in the sense of \cite[Definition~1.5.3]{kedlaya-swan1},
i.e., there exist elements $u_1,\dots,u_m \in K$ such that:
\begin{itemize}
\item
for $i,j \in \{1,\dots,m\}$ with $i \neq j$, $\del_i(u_i) = 1$
and $\del_i(u_j) = 0$;
\item
for $n$ a positive integer, $i \in \{1,\dots,m\}$,
and $x \in F$, $|u_i^n \del_i^n(x)/n!| \leq |x|$.
\end{itemize}
Let $K_0$ be the joint kernel of $\del_1,\dots,\del_n$; it is again
a complete nonarchimedean field. Let $k_0, k$ be the respective 
residue fields of
$K_0, K$, and assume that $k_0, k$ are of characteristic $p>0$.
Let $\gotho_K$ denote the valuation subring of $K$,
let $\gothm_K$ denote the maximal ideal of $\gotho_K$,
and let $\Gamma^*$ be the divisible closure of $|K^\times|$.
\end{hypothesis}

\begin{remark}
Hypothesis~\ref{H:rational type} differs from
the running hypothesis \cite[Hypothesis~2.1.3]{kedlaya-swan1}
from the previous paper, which required one of the following.
(Beware that as written, \cite[Hypothesis~2.1.3(b)]{kedlaya-swan1}
is missing the hypothesis that $k$ is separable over $k_0$.)
\begin{enumerate}
\item[(a)]
$K$ is a finite unramified extension of the completion of
$K_0(u_1,\dots,u_n)$ for the $(1,\dots,1)$-Gauss norm.
\item[(b)]
$K_0$ and $K$ are discretely valued with the same value group,
$k$ is separable over $k_0$,
and $k$ admits a finite $p$-basis over $k_0$.
\end{enumerate}
On one hand, by \cite[Remark~1.5.10]{kedlaya-xiao}, both of these are special
cases of Hypothesis~\ref{H:rational type}. On the other hand,
all results in \cite{kedlaya-swan1} proved assuming
\cite[Hypothesis~2.1.3]{kedlaya-swan1} remain true assuming
Hypothesis~\ref{H:rational type}, with no change in 
the proofs (except that our
$K,K_0$ correspond to the labels $L,K$ in \cite{kedlaya-swan1}).
We will use results from \cite{kedlaya-swan1} generalized in this way
without further comment; see \cite[Theorem~2.6.1]{kedlaya-xiao} for a 
representative example of how the proofs carry over.
\end{remark}

\begin{hypothesis} \label{H:running}
Throughout this section:
\begin{itemize}
\item
let $P$ denote a smooth affine irreducible formal scheme
over $\Spf \gotho_K$, with generic fibre $P_K$ and special
fibre $Z = P_k$;
\item
let $L$ denote the completion of $\Frac \Gamma(P, \calO)$ for the topology
induced by the supremum norm on $P_K$;
\item
let $U$ denote an open dense subscheme of $Z$.
\end{itemize}
\end{hypothesis}

\begin{notation}
For $Z' \hookrightarrow Z$ an immersion, 
we denote by $]Z'[_P$ the inverse image
of $Z'$ under the specialization map $P_K \to Z$; we also refer to
$]Z'[_P$ as the \emph{tube}
of $Z'$ in $P_K$.
\end{notation}

\begin{defn}
We say a subinterval of $[0, +\infty)$ is \emph{aligned} if 
each endpoint at which it is closed belongs to
$\Gamma^* \cup \{0\}$. This is consistent with \cite[Notation~2.4.1]{kedlaya-part1},
which only applied to intervals not containing 0, and with
\cite[Definition~3.1.1]{kedlaya-part1}.
\end{defn}

\subsection{Relative annuli}

\begin{lemma} \label{L:rel annuli}
Let $Y$ be a rigid subspace of $P_K \times A_K[0,1)$. Then the following conditions are 
equivalent.
\begin{enumerate}
\item[(a)]
There exists $\epsilon \in (0,1)$ such that $P_K \times A_K(\epsilon,1)
\subseteq Y$.
\item[(b)]
There exists an affinoid subspace $V$ of $P_K \times A_K[0,1)$ such that
$\{Y,V\}$ forms an admissible covering of $P_K \times A_K[0,1)$.
\item[(c)]
There exist $\rho \in (0,1) \cap \Gamma^*$ and an
affinoid subspace $V$ of $P_K \times A_K[\rho,1)$ such that
$\{Y \cap (P_K \times A_K[\rho,1)),V\}$ forms an admissible covering of $P_K \times A_K[\rho,1)$.
\end{enumerate}
\end{lemma}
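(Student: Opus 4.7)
I would argue the cyclic chain $(a) \Rightarrow (b) \Rightarrow (c) \Rightarrow (a)$, using throughout the standard admissible covers
\[
P_K \times A_K[0,1) = \bigcup_{\sigma \in \Gamma^* \cap (0,1)} P_K \times A_K[0,\sigma], \qquad P_K \times A_K[\rho,1) = \bigcup_{\sigma \in \Gamma^* \cap (\rho,1)} P_K \times A_K[\rho,\sigma],
\]
whose members are affinoid because $P$ is smooth affine (so that $P_K$ is affinoid) and the second factors are affinoid for radii in $\Gamma^*$. The other workhorse is that any affinoid rigid space is quasi-compact, so it sits inside a single member of any such increasing admissible cover.

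For $(a) \Rightarrow (b)$, pick $\rho \in (\epsilon,1) \cap \Gamma^*$ and set $V := P_K \times A_K[0,\rho]$, which is affinoid; set-theoretically $V \cup Y \supseteq P_K \times A_K[0,1)$. To check admissibility of $\{V,Y\}$ it suffices to check it on each affinoid $P_K \times A_K[0,\sigma]$ with $\sigma \in \Gamma^* \cap (\rho,1)$; there, choosing any $\epsilon' \in (\epsilon,\rho) \cap \Gamma^*$, the two-piece cover $\{P_K \times A_K[0,\rho],\, P_K \times A_K[\epsilon',\sigma]\}$ is a standard rational-subdomain cover of $P_K \times A_K[0,\sigma]$, and it refines $\{V,\, Y \cap (P_K \times A_K[0,\sigma])\}$ (since the latter piece contains $P_K \times A_K(\epsilon,\sigma]$ by hypothesis~(a)), so the latter cover is admissible.

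For $(b) \Rightarrow (c)$, quasi-compactness of the affinoid $V$ in (b) gives $V \subseteq P_K \times A_K[0,\rho_0]$ for some $\rho_0 \in \Gamma^* \cap (0,1)$; choose $\rho \in (\rho_0,1) \cap \Gamma^*$. Then $V$ is disjoint from $P_K \times A_K[\rho,1)$, so the covering property in (b) forces $Y \supseteq P_K \times A_K[\rho,1)$; one may then take the affinoid required by (c) to be $V' := P_K \times A_K[\rho,\rho]$, for which admissibility is trivial. For $(c) \Rightarrow (a)$, the same quasi-compactness argument applied inside $P_K \times A_K[\rho,1)$ gives $V \subseteq P_K \times A_K[\rho,\rho']$ for some $\rho' \in (\rho,1) \cap \Gamma^*$, so $V$ is disjoint from $P_K \times A_K(\rho',1)$; the cover in (c) then forces $Y \supseteq P_K \times A_K(\rho',1)$, yielding (a) with $\epsilon = \rho'$. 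The only real subtlety is that the $\epsilon$ in (a) need not lie in $\Gamma^*$, which is exactly why we slip an auxiliary $\rho \in \Gamma^*$ strictly between $\epsilon$ and $1$ at the start of $(a) \Rightarrow (b)$; this keeps all the affinoids we build honestly rational and lets the admissibility reduce to a standard rational-cover check.
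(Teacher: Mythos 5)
Your argument is correct, and the core idea — that an affinoid $V$ is quasi-compact, hence bounded away from $|t|=1$ — is exactly the paper's observation (the paper phrases it as $|t|$ achieving its supremum $\eta < 1$ on $V$ by the maximum modulus principle; quasi-compactness inside the increasing cover by $P_K \times A_K[0,\sigma]$ says the same thing). Where you deviate is in $(b)\Rightarrow(c)$: the paper treats this as genuinely trivial, simply restricting the admissible cover from $(b)$ to $P_K \times A_K[\rho,1)$ and noting that $V \cap (P_K \times A_K[\rho,1))$ is a Laurent domain of $V$, hence affinoid; whereas you run the quasi-compactness argument at this stage to show $Y$ actually contains a whole boundary annulus, which amounts to proving $(b)\Rightarrow(a)$ and then checking $(c)$ trivially. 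Your route is sound and self-contained, just a little longer — and slightly redundant given that you then reuse the same quasi-compactness point for $(c)\Rightarrow(a)$. Your expanded admissibility check in $(a)\Rightarrow(b)$ (via the refinement by a two-piece rational cover of each $P_K \times A_K[0,\sigma]$) is correct detail that the paper compresses to the word ``clear.''
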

\begin{proof}
The implication (a)$\implies$(b) is clear: take $V = P_K \times A_K[0,\rho]$
for any $\rho \in (\epsilon,1) \cap \Gamma^*$.
The implication (b)$\implies$(c) is trivial.
For (c)$\implies$(a), note that the maximum modulus principle implies that
$t$ achieves its supremum $\eta$ on $V$, so $\eta$ must be less than 1;
we can thus satisfy (a) by choosing any $\epsilon \in (\eta,1)$.
\end{proof}

\begin{defn}
Define a \emph{relative annulus} over $P_K$ to be a subspace of 
$P_K \times A_K[0,1)$ satisfying one of the equivalent conditions of
Lemma~\ref{L:rel annuli}.
\end{defn}

\begin{defn} \label{D:generic fibre}
Given a coherent (locally free) sheaf $\calE$ on
a relative annulus $X$ containing
$P_K \times A_K(\epsilon,1)$,
there is a unique coherent (locally free) sheaf
$\calF$ on $A_L(\epsilon,1)$ such that for each closed aligned subinterval
$I$ of $(\epsilon,1)$, we have an identification
\[
\Gamma(A_L(I),\calF) \cong \Gamma(P_K \times A_K(I),\calE) \otimes_{\Gamma(P_K
\times A_K(I),\calO)} \Gamma(A_L(I),\calO),
\]
and these identifications commute with restriction maps.
We call $\calF$ the \emph{generic fibre} of $\calE$.
(See \cite[Definition~5.3.3]{kedlaya-part3} for more details.)
\end{defn}

The following lemma will be useful in consideration of generic fibres.
\begin{lemma} \label{L:same norm}
For $\rho \in (0,1) \cap \Gamma^*$,
suppose that $f \in \Frac \Gamma(P_K \times A_K[\rho,\rho],
\calO)$ can be written as a ratio of two elements of 
$\Gamma(P_K \times A_K[\rho,\rho], \calO)$, neither of which has a zero
in $A_L[\rho,\rho]$. Then the open dense subscheme $U$ of $Z$ can be
chosen so that for any $x \in ]U[_P \times A_K[\rho,\rho]$,
$|f(x)| = |f|_\rho$.
\end{lemma}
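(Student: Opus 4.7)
The plan is to write $f=g/h$ as provided, expand $g$ and $h$ as Laurent series in the annulus parameter $t$, use the unit hypothesis to identify a single dominant term in each, and then choose $U$ so that these dominant coefficients attain their supremum norms on $]U[_P$.

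In more detail: write $g=\sum_{n\in\ZZ} g_n t^n$ and $h=\sum_{n\in\ZZ} h_n t^n$ with $g_n,h_n\in\Gamma(P_K,\calO)$ and $|g_n|_P\rho^n,|h_n|_P\rho^n\to 0$ as $|n|\to\infty$. By the very definition of $L$ as the completion of $\Frac\Gamma(P,\calO)$ under the supremum norm on $P_K$, we have $|g_n|_L=|g_n|_P$ and $|h_n|_L=|h_n|_P$, so the $\rho$-Gauss norms of $g,h$ viewed in $\Gamma(A_L[\rho,\rho],\calO)$ agree with the values $|g|_\rho,|h|_\rho$ computed on $P_K\times A_K[\rho,\rho]$. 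The next step is to invoke the standard $\rho$-Gauss reduction criterion (valid since $\rho\in\Gamma^*\subseteq\sqrt{|L^\times|}$): a Laurent series $\sum a_n t^n\in\Gamma(A_L[\rho,\rho],\calO)$ is a unit if and only if there is a unique index achieving $\max_n |a_n|_L\rho^n$, with strict inequality at all other indices. Applying this to $g$ and $h$, which are units by the no-zero hypothesis, I obtain integers $n_0,m_0$ with $|g_n|_P\rho^n<|g_{n_0}|_P\rho^{n_0}$ for $n\neq n_0$, and analogously for $h$.

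Now I choose $U$. For any $\phi\in\Gamma(P_K,\calO)$ with sup norm $r=|\phi|_P$ nonzero, the locus where $|\phi(y)|=r$ is the specialization preimage of an open dense subscheme of $Z$: if $r\in|K^\times|$, rescale to reduce to $r=1$, so $\phi\in\Gamma(P,\calO)$ has a nonzero reduction $\overline{\phi}\in\Gamma(Z,\calO)$, and $\{\overline{\phi}\neq 0\}\subseteq Z$ works by irreducibility of $Z$; the general case is reduced to this one after a finite extension of $K$, which does not affect $Z$. Let $U_g$ and $U_h$ be the open dense subschemes of $Z$ arising this way for $g_{n_0}$ and $h_{m_0}$ respectively, and set $U=U_g\cap U_h$ (intersected with any previously specified $U$ if desired).

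Finally, I verify the conclusion: for $x=(y,t_0)\in{]U[_P}\times A_K[\rho,\rho]$, the choice of $U$ gives $|g_{n_0}(y)|=|g_{n_0}|_P$, while $|g_n(y)t_0^n|\leq|g_n|_P\rho^n<|g|_\rho$ for $n\neq n_0$; by the ultrametric inequality, $|g(x)|=|g_{n_0}(y)|\rho^{n_0}=|g|_\rho$, and similarly $|h(x)|=|h|_\rho$, whence $|f(x)|=|g|_\rho/|h|_\rho=|f|_\rho$. The main obstacle is the unit characterization in $\Gamma(A_L[\rho,\rho],\calO)$ for arbitrary $\rho\in\Gamma^*$, but this is a routine $\rho$-Gauss reduction argument and can be cited or sketched in a single line; the rest is bookkeeping with the ultrametric inequality and the specialization map.
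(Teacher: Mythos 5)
Your proof is correct and follows essentially the same route as the paper: reduce to a single factor $g$ with no zero in $A_L[\rho,\rho]$, use the Newton polygon/unit criterion (Lazard) to find a unique index at which $|g_n|\rho^n$ is maximized, and shrink $U$ away from the zero locus of the reduction of that coefficient so that the dominant term governs $|g(x)|$ by the ultrametric inequality. Your additional remarks on a finite extension of $K$ to ensure $|g_{n_0}|_P \in |K^\times|$ make explicit a point the paper's proof leaves implicit when it asserts one can choose $\lambda \in K^\times$ with $\lambda c_i \in \Gamma(P,\calO)$ having nonzero image in $\Gamma(Z,\calO)$.
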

\begin{proof}
It  suffices to consider $f \in \Gamma(P_K \times A_K[\rho,\rho], 
\calO)$ having no zero in $A_L[\rho,\rho]$,
since by hypothesis
we can write the original $f$ as a quotient of two such functions.
Since $f$ has no zero in $A_L[\rho,\rho]$, its Newton polygon 
(in the sense of Lazard \cite{lazard}) has no
segment of the corresponding slope; that is, if we write $f = \sum_{i \in \ZZ}
c_i t^i$ with $c_i \in 
\Gamma(P_K, \calO)$, then there is a unique index $i$ with
$|c_i| \rho^i = |f|_\rho$. It thus suffices to check
the given assertion for $f = c_i t^i$, for which it is evident:
choose a scalar $\lambda \in K^\times$ such that $\lambda c_i$ belongs to
$\Gamma(P, \calO)$ and has nonzero image in $\Gamma(Z, \calO)$,
then take $U$ not meeting the zero locus of said image.
\end{proof}

\begin{defn} \label{D:extend Swan}
Let $X$ be a relative annulus over $P_K$ containing
$P_K \times A_K(\epsilon,1)$, let $\calE$ be a $\nabla$-module
on $X$ relative to $K_0$, 
and let $\calF$ be the generic fibre of $\calE$. Then
$\calF$ naturally admits the structure of a $\nabla$-module
on $A_L(\epsilon,1)$ relative to $K_0$, in the sense of
\cite[Definition~2.4.5]{kedlaya-swan1}.
We say that $\calE$ is \emph{solvable at $1$} if $\calF$ is,
in which case we define the \emph{highest break},
\emph{break multiset}, and
\emph{differential Swan conductor} of $\calE$ as the corresponding
items associated to $\calF$.
\end{defn}

\begin{remark} \label{R:extend Swan}
By the results of \cite[\S 2.6]{kedlaya-swan1},
the constructions in Definition~\ref{D:extend Swan} are invariant
under pullback along an automorphism of $P_K \times A_K[\epsilon,1)$
for $\epsilon \in (0,1) \cap \Gamma^*$, even if the automorphism does not
preserve $P_K$ or the projection onto $P_K$.
\end{remark}

\subsection{Fringed relative annuli}

We will have use for a variant of the concept of a relative annulus;
the resulting objects
are related to relative annuli in the same way that the weak formal
schemes of Meredith \cite{meredith}
are related to ordinary formal schemes,
or the dagger spaces of Grosse-Kl\"onne \cite{grosse-klonne}
are related to ordinary rigid spaces.

\begin{defn}
A \emph{strict neighborhood} of $]U[_P$ in $P_K$ is a rigid subspace
$W \subseteq P_K$ such that $\{W,  ]Z\setminus U[_P\}$ is an admissible
covering of $P_K$.
\end{defn}

\begin{lemma} \label{L:strict to generic}
Suppose that $Z \setminus U$ has pure codimension $1$ in $Z$.
Let $W$ be a strict neighborhood of $]U[_P$ in $P_K$.
Then for each $c \in (0,1)$, there exists a strict 
neighborhood $W' \subseteq W$
of $]U[_P$ in $P_K$ such that for each $f \in \Gamma(W, \calO)$,
\[
|f|_{\sup,W'} \leq |f|_{\sup,W}^c |f|_L^{1-c}.
\]
\end{lemma}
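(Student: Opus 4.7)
The plan is to introduce an explicit cofinal family of strict neighborhoods of $]U[_P$ parametrized by a single real parameter in $(0,1)$, then deduce the estimate from a Hadamard three-circles interpolation applied to a Laurent expansion on each piece of a rational cover.

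First I would exploit that $P$ is affine and $Z \setminus U$ has pure codimension $1$ in $Z$ to choose $g_1,\dots,g_r \in \Gamma(P, \calO)$ whose reductions mod $\gothm_K$ cut out $Z \setminus U$ set-theoretically in $Z$; after rescaling by units of $\gotho_K$ we may assume $|g_i|_{\sup,P_K}=1$. For $\lambda \in (0,1) \cap \Gamma^*$ set
\[
W_\lambda := \{x \in P_K : \max_i |g_i(x)| \geq \lambda\}.
\]
Each $W_\lambda$ is an affinoid strict neighborhood of $]U[_P$, with $]Z\setminus U[_P = \{x : \max_i |g_i(x)| < 1\}$, and a standard admissibility/quasi-compactness argument (from Berthelot's theory of strict neighborhoods) produces $\lambda_0 \in (0,1) \cap \Gamma^*$ with $W_{\lambda_0} \subseteq W$; in particular $|f|_{\sup,W_{\lambda_0}} \leq |f|_{\sup,W}$ for every $f \in \Gamma(W,\calO)$.

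Given $c \in (0,1)$ I would then take $W' := W_\lambda$ with $\lambda := \lambda_0^c \in (\lambda_0,1)$. Covering $W_\lambda$ by the rational subdomains $W_\lambda^{(i)} := \{|g_j| \leq |g_i| \text{ for all } j,\ |g_i| \geq \lambda\}$ and refining further by bounds on each $|g_j/g_i|$, $f$ admits a Laurent-type expansion $f = \sum_{n \in \ZZ} c_n g_i^n$. Termwise: for $n < 0$, using $\lambda^n = \lambda_0^{cn}$,
\[
|c_n|\lambda^n = (|c_n|\lambda_0^n)^c \cdot |c_n|^{1-c} \leq |f|_{\sup,W_{\lambda_0}}^{c} \cdot |f|_L^{1-c},
\]
via the Cauchy-type bound $|c_n|\lambda_0^n \leq |f|_{\sup,W_{\lambda_0}}$ on one side and $|c_n| \leq |f|_L$ on the other; and for $n \geq 0$, $|c_n| \leq \min(|f|_{\sup,W_{\lambda_0}}, |f|_L) \leq |f|_{\sup,W_{\lambda_0}}^{c} |f|_L^{1-c}$. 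Summing and taking $\sup$ over $n$ and $i$ yields the bound on $W_\lambda$, which combined with $|f|_{\sup,W_{\lambda_0}} \leq |f|_{\sup,W}$ gives the lemma.

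The main obstacle is producing and controlling the Laurent expansion in the non-principal case $r>1$: one must refine the cover further by bounding the $|g_j/g_i|$, patch the coefficient estimates consistently across the cover, and identify each $|c_n|$ with the norm of an element of $L$. This identification is where the definition of $L$ as the completion of $\Frac \Gamma(P,\calO)$ for the sup norm on $P_K$ is used essentially; combined with the fact that $\{W_\lambda\}_\lambda$ shrinks to $]U[_P$ as $\lambda \to 1^-$, one obtains $\lim_{\lambda \to 1^-} |f|_{\sup,W_\lambda} = |f|_L$, confirming the sharpness of the interpolation as $c \to 0$.
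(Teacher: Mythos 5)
The paper disposes of this lemma with a bare citation to \cite[Proposition~3.5.2]{kedlaya-part1}, so there is no in-paper argument to compare against line by line. Your approach is nonetheless the expected one: introduce the cofinal family $W_\lambda = \{x : \max_i |g_i(x)| \geq \lambda\}$ of standard strict neighborhoods of $]U[_P$, extract $\lambda_0$ with $W_{\lambda_0}\subseteq W$ from the admissibility of the covering $\{W, ]Z\setminus U[_P\}$ of the affinoid $P_K$, set $W' = W_{\lambda_0^c}$, and interpolate via a Hadamard three-circles bound. Your term-by-term estimate in the annular (single $g$, smooth zero locus) case is correct, and it is the same mechanism as Lemma~\ref{L:hadamard}, which is likewise imported from \cite{kedlaya-part1}.

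As you yourself flag, though, the argument as written has a real gap beyond the annular case. A Laurent expansion $f = \sum_n c_n g_i^n$ on the rational piece $W_\lambda^{(i)}$, with coefficients whose norms can be compared to $|\cdot|_L$, is not automatic: one needs $W_\lambda^{(i)}$ to be a relative annulus over a fixed base (essentially, $Z\setminus U$ smooth along the zero locus of $g_i$) together with an isometric embedding of the coefficient ring into $L$, and one also has to justify the asserted limit $|f|_{\sup,W_\lambda}\to|f|_L$ in general. These are precisely where the codimension-$1$ hypothesis and the definition of $L$ actually enter, so they cannot be treated as routine patching. To close the gap, note that the three-circles step really needs only two inputs: (a) $\log|f|_{\sup,W_\lambda}$ is a convex function of $\log\lambda$, and (b) $|f|_{\sup,W_\lambda}\to|f|_L$ as $\lambda\to 1^-$. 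Both can be reduced, via an affine cover of $P$ and Berthelot's fibration theorem (exactly as invoked in Definition~\ref{D:geometric Swan}), to the case where $Z\setminus U$ is cut out by a single $g$ with smooth zero locus, where $W_\lambda$ is genuinely a relative annulus and your Laurent computation applies verbatim; the global sup norm over $W_\lambda$ is then the maximum of the sup norms over the cover, and a maximum of log-convex functions is log-convex.
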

\begin{proof}
See \cite[Proposition~3.5.2]{kedlaya-part1}.
\end{proof}

\begin{defn} \label{D:fringed}
We say a subspace $Y \subseteq P_K \times A_K[0,1)$ is a
\emph{fringed relative annulus} over $]U[_P$ (or better,
over the inclusion $]U[_P \hookrightarrow 
P_K$) if 
$Y$ satisfies the
following property for some $\epsilon \in (0,1)$: for every 
closed aligned
subinterval
$I$ of $(\epsilon,1)$, there is a strict neighborhood $W$ of $]U[_P$
in $P_K$ 
such that $W \times A_K(I) \subset Y$.
\end{defn}

\begin{remark} \label{R:proper fringed}
We can use the same definition to define a fringed relative annulus
over $]U[_P$ for $P$ smooth \emph{proper} over $\Spf \gotho_K$.
We will have occasion to do this in Subsection~\ref{subsec:variation}.
\end{remark}

\begin{defn}
Let $Y$ be a fringed relative annulus over $]U[_P$ in $P_K$.
Then the intersection $Y_0 = Y \cap (]U[_P \times A_K[0,1))$ is 
a relative annulus over $]U[_P$; we call $Y_0$ the \emph{core} of $Y$.
We will extend various properties of relative annuli, or 
sheaves on relative
annuli, to fringed relative annuli by restriction to the core.
\end{defn}

\begin{lemma} \label{L:intersection2}
Suppose that $Z \setminus U$ has pure codimension $1$ in $Z$.
Let $W$ be a strict neighborhood of $]U[_P$.
Let $I$ be a closed aligned interval, let $I'$ be a closed
aligned subinterval of the interior
of $I$, and choose $\rho \in I' \cap \Gamma^*$. Then there exists
a strict neighborhood $W'$ of $]U[_P$ in $P_K$ such that
within $\Gamma(A_L[\rho,\rho], \calO)$,
\[
\Gamma(W \times A_K[\rho,\rho], \calO)
\cap \Gamma(A_L(I),\calO) \subseteq \Gamma(W' \times A_K(I'), \calO).
\]
\end{lemma}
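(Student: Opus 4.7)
The plan is to expand a function $f$ in the intersection as a Laurent series $f = \sum_{i \in \ZZ} c_i t^i$ in $\Gamma(W \times A_K[\rho,\rho], \calO)$ with $c_i \in \Gamma(W, \calO)$, and then to apply Lemma~\ref{L:strict to generic} uniformly to the $c_i$ in order to pass to a smaller strict neighborhood $W'$ on which the sup norm of each $c_i$ is controlled by a geometric average of its norms on $W$ and on $L$.

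Write $I = [\alpha, \beta]$ and $I' = [\alpha', \beta']$; since $\rho \in I'$ and $I'$ lies in the interior of $I$, one has $\alpha < \alpha' \le \rho \le \beta' < \beta$. Setting $M = |f|_{\sup, W \times A_K[\rho,\rho]}$, the Laurent expansion gives $|c_i|_{\sup, W} \le M \rho^{-i}$ for all $i$. On the other hand, since the same $f$ extends to $\Gamma(A_L(I), \calO)$, the coefficients satisfy $|c_i|_L \alpha^i \to 0$ as $i \to -\infty$ and $|c_i|_L \beta^i \to 0$ as $i \to +\infty$. I then pick $c \in (0,1)$ small enough to satisfy both $\beta' \le \rho^c \beta^{1-c}$ and $\alpha' \ge \rho^c \alpha^{1-c}$, which is possible precisely because of the strict inequalities $\alpha < \alpha'$ and $\beta' < \beta$. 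Lemma~\ref{L:strict to generic} applied to this $c$ then furnishes a strict neighborhood $W' \subseteq W$ of $]U[_P$ with $|g|_{\sup, W'} \le |g|_{\sup, W}^c |g|_L^{1-c}$ for all $g \in \Gamma(W, \calO)$.

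Applying this bound to each $c_i$ and multiplying through by $\sigma^i$ for $\sigma \in I'$, the three estimates combine to force $|c_i|_{\sup, W'} \sigma^i \to 0$ as $|i| \to \infty$: for $i \ge 0$ the offending monomial factor $(\sigma \rho^{-c} \beta^{-(1-c)})^i$ stays bounded by $1$ thanks to $\sigma \le \beta' \le \rho^c \beta^{1-c}$, while the factor $M^c (|c_i|_L \beta^i)^{1-c}$ tends to $0$; the analogous estimate handles $i \le 0$ via the $\alpha$-side inequality. Hence the Laurent series converges on $W' \times A_K(I')$, giving $f \in \Gamma(W' \times A_K(I'), \calO)$ as desired.

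The main obstacle is the simultaneous choice of the interpolation parameter $c$: it must be small enough that neither the positive-index nor the negative-index tail of the series is disturbed. This reduces to a short elementary inequality (any positive $c$ below $\min\{\log(\alpha'/\alpha)/\log(\rho/\alpha),\, \log(\beta/\beta')/\log(\beta/\rho)\}$ works, with the convention $0/0 = 1$ if $\rho$ coincides with an endpoint of $I'$), after which the remainder of the proof is a routine convergence check made possible by the fact that the same $W'$ serves for all the $c_i$ at once.
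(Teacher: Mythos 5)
Your proposal is correct and follows essentially the same route as the paper's proof: write $I=[a,b]$, $I'=[a',b']$, pick $c\in(0,1)$ small enough that $\rho^c a^{1-c}<a'$ and $b'<\rho^c b^{1-c}$, invoke Lemma~\ref{L:strict to generic} for that $c$ to produce $W'$, and interpolate coefficientwise to force decay of $|c_i|_{\sup,W'}\sigma^i$ for $\sigma\in I'$. The only cosmetic difference is that you use the crude uniform bound $|c_i|_{\sup,W}\le M\rho^{-i}$ in place of the decay $|c_i|_{\sup,W}\rho^i\to 0$; both give the same conclusion.
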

\begin{proof}
Write $I = [a,b]$ and $I' = [a',b']$, so that $a < a' \leq \rho \leq b' < b$. 
Note that for $c \in (0,1)$
sufficiently close to 0, we have
\begin{equation} \label{eq:rho choice}
\rho^c a^{1-c} < a', \qquad b' < \rho^c b^{1-c}.
\end{equation}
Fix one such $c$; by
Lemma~\ref{L:strict to generic}, we can choose the strict
neighborhood $W'$ so that for any $f \in \Gamma(W,\calO)$,
\begin{equation} \label{eq:convexity}
|f|_{\sup,W'} \leq |f|_{\sup,W}^c |f|_L^{1-c}.
\end{equation}
For $f \in \Gamma(A_L[\rho,\rho], \calO)$, we can write
$f = \sum_{i \in \ZZ} f_i t^i$ with $f_i \in L$. If
$f \in \Gamma(W \times A_K[\rho,\rho],\calO)$, then
$f_i \in \Gamma(W, \calO)$ for each $i$, and
$|f_i|_{\sup,W} \rho^i \to 0$ as $i \to \pm \infty$;
if $f \in \Gamma(A_L(I), \calO)$, then for each $\eta \in I$,
$|f_i|_L \eta^i \to 0$ as $i \to \pm \infty$.
If both containments hold, then by \eqref{eq:convexity},
\[
\lim_{i \to \pm \infty} |f_i|_{\sup,W'} (\rho^c \eta^{1-c})^i = 0 \qquad (\eta \in
I);
\]
by \eqref{eq:rho choice}, this implies that for any $\eta \in I'$,
$|f_i|_{\sup,W'} \eta^i \to 0$ as $i \to \pm \infty$. This proves the claim.
\end{proof}

\begin{lemma} \label{L:proj intersect}
Let 
\[
\xymatrix{
R \ar[r] \ar[d] &  S \ar[d] \\
T \ar[r] & U
}
\]
be a commuting diagram of inclusions of integral domains,
such that the intersection
$S \cap T$ within $U$ is equal to $R$. Let $M$ be a finite
locally free $R$-module. Then the intersection of $M \otimes_R S$
and $M \otimes_R T$ within $M \otimes_R U$ is equal to $M$.
\end{lemma}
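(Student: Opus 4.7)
The plan is to reduce the claim to the case $M = R^n$, where it becomes the hypothesis $S \cap T = R$ read componentwise inside $U^n$.

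First I would record that since $M$ is finite locally free it is finitely generated and projective over $R$, and in particular $R$-flat. Flatness of $M$ together with the given inclusions $R \hookrightarrow S, T \hookrightarrow U$ ensures that each of the natural maps $M \to M \otimes_R S$, $M \to M \otimes_R T$, $M \otimes_R S \to M \otimes_R U$, and $M \otimes_R T \to M \otimes_R U$ is injective, so the intersection in the statement is well-defined and clearly contains $M$; only the reverse inclusion needs work.

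Next I would use projectivity to choose an $R$-module $N$ with $M \oplus N \cong R^n$ for some $n$. Tensoring commutes with direct sums, so the same decomposition survives after $\otimes_R S$, $\otimes_R T$, and $\otimes_R U$, and a purely formal fact about submodules of internal direct sums yields
\[
\bigl((M\oplus N)\otimes_R S\bigr) \cap \bigl((M\oplus N)\otimes_R T\bigr)
= \bigl((M\otimes_R S)\cap(M\otimes_R T)\bigr) \oplus \bigl((N\otimes_R S)\cap(N\otimes_R T)\bigr).
\]
If the lemma holds for the free module $M \oplus N = R^n$, then the left side equals $R^n = M \oplus N$; since each summand on the right already contains $M$ or $N$ respectively, equality is forced in each, giving the lemma for $M$.

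Finally, in the free case $M = R^n$ the three tensor products identify with $S^n$, $T^n$, $U^n$ under componentwise inclusions, and the intersection $S^n \cap T^n$ inside $U^n$ is $(S \cap T)^n = R^n$ by hypothesis. The only step with any real subtlety is the reduction to a direct summand of a free module, which is just the standard characterization of finitely generated projective modules; I do not anticipate any serious obstacle beyond this.
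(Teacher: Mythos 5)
Your proof is correct. The paper itself simply cites \cite[Lemma~2.3.1]{kedlaya-xiao} rather than giving an argument, but the route you take — pass to a complement $N$ with $M \oplus N \cong R^n$, observe that tensoring preserves the decomposition, use the elementary fact that the intersection of ``rectangular'' submodules in a direct sum decomposes componentwise, and then settle the free case by $(S\cap T)^n = R^n$ — is the standard reduction and is almost certainly what the cited lemma does. Two minor points worth making explicit if you were to write this up: first, the identification of ``finite locally free'' with ``finitely generated projective'' is immediate here since all the rings in play are Noetherian (more generally it needs finite presentation, not just finite generation); second, the final deduction that $A := (M\otimes_R S) \cap (M\otimes_R T)$ equals $M$ (given $A \oplus B = M \oplus N$ with $M \subseteq A$, $N \subseteq B$) is cleanest by projecting $U^n = (M\otimes_R U) \oplus (N\otimes_R U)$ onto the first factor, which sends both $A\oplus B$ and $M\oplus N$ to $A$ and $M$ respectively, forcing $A = M$. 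Neither point is a gap; your argument is sound as stated.
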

\begin{proof}
See \cite[Lemma~2.3.1]{kedlaya-xiao}.
%
\end{proof}

\begin{lemma} \label{L:intersection3}
Suppose that $Z \setminus U$ has pure codimension $1$ in $Z$.
Let $Y$ be a fringed relative annulus over $]U[_K$,
choose $\epsilon \in (0,1)$ as in Definition~\ref{D:fringed},
choose $\rho \in (\epsilon,1) \cap \Gamma^*$,
and choose a strict neighborhood $W$ of $]U[_K$ in $P_K$ such that
$W \times A_K[\rho,\rho] \subset Y$.
Let $\calE$ be a coherent locally free sheaf on $Y$, and let
$\calF$ be the generic fibre of $\calE$ on $A_L(\epsilon,1)$.
Suppose that $\bv \in \Gamma(A_L[\rho,\rho], \calF)$ satisfies
\[
\bv \in \Gamma(W \times A_K[\rho,\rho],\calE)
\cap \Gamma(A_L(\epsilon,1), \calF).
\]
Then there exists a fringed relative annulus $Y'$ over $]U[_K$
such that $\bv \in \Gamma(Y', \calE)$.
\end{lemma}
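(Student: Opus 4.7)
The plan is to reduce the statement to Lemma~\ref{L:intersection2} via Lemma~\ref{L:proj intersect}, and then to build the desired fringed relative annulus by patching extensions together over an exhaustive family of aligned intervals. First observe that it suffices to exhibit, for each closed aligned subinterval $I'$ of some $(\epsilon',1)$, a strict neighborhood $W_{I'}$ of $]U[_P$ in $P_K$ with $W_{I'} \times A_K(I') \subseteq Y$ and $\bv \in \Gamma(W_{I'} \times A_K(I'), \calE)$; the union $Y' := \bigcup_{I'} W_{I'} \times A_K(I')$ is then fringed by construction, and the individual extensions agree on overlaps because each restricts to the known element $\bv$ of the generic fibre $\calF$, which determines sections of $\calE$ on the relevant affinoid pieces.

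The heart of the argument treats the case $\rho \in \inte(I')$. Choose a closed aligned $I$ with $I' \subset \inte(I)$ and $I \subseteq (\epsilon,1)$; by fringedness of $Y$, there is a strict neighborhood $W_I$ with $W_I \times A_K(I) \subseteq Y$, and after replacing $W$ by $W \cap W_I$ we may assume $W \subseteq W_I$. Lemma~\ref{L:intersection2}, applied to $W_I$, $I$, $I'$, $\rho$, produces a strict neighborhood $W' \subseteq W_I$ satisfying
\[
\Gamma(W_I \times A_K[\rho,\rho], \calO) \cap \Gamma(A_L(I), \calO) \subseteq \Gamma(W' \times A_K(I'), \calO)
\]
inside $\Gamma(A_L[\rho,\rho], \calO)$. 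To promote this scalar-level inclusion to one for sections of the locally free sheaf $\calE$, invoke Lemma~\ref{L:proj intersect} with $R := \Gamma(W' \times A_K(I'), \calO)$ and $M := \Gamma(W' \times A_K(I'), \calE)$, which is a finite locally free $R$-module since $W' \times A_K(I') \subseteq Y$ is affinoid. The auxiliary rings $S$, $T$, $U$ and their four commuting inclusions are arranged using restriction to the thin annulus $A_K[\rho,\rho]$ and base change to the generic fibre, so that Lemma~\ref{L:intersection2} supplies the equality $S \cap T = R$ inside $U$ verbatim, while the compatibilities of Definition~\ref{D:generic fibre} identify each of $M \otimes_R S$, $M \otimes_R T$, $M \otimes_R U$ with a space of sections of $\calE$ or $\calF$ in which $\bv$ is already known to sit. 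Lemma~\ref{L:proj intersect} then yields $\bv \in M$, i.e., $\bv \in \Gamma(W' \times A_K(I'), \calE)$; take $W_{I'} := W'$.

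For closed aligned $I'$ not containing $\rho$, iterate: given an already-constructed extension $\bv \in \Gamma(W_{I_0} \times A_K(I_0), \calE)$ with $I_0 \ni \rho$, restrict to $\Gamma(W_{I_0} \times A_K[\rho_1,\rho_1], \calE)$ for $\rho_1 \in I_0 \cap \Gamma^*$ chosen near an endpoint of $I_0$, and re-apply the previous argument with $\rho_1$ and $W_{I_0}$ in place of $\rho$ and $W$. The resulting interval $I_1$ protrudes past $I_0$, and finitely many iterations cover any prescribed closed aligned subinterval of $(\epsilon,1)$. The union of the resulting extensions is the desired $Y'$.

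The main obstacle is the careful setup in the application of Lemma~\ref{L:proj intersect}: the four rings and four commuting inclusions must be chosen so that the conclusion of Lemma~\ref{L:intersection2} matches the hypothesis $S \cap T = R$ in $U$ exactly, and so that the various tensor products $M \otimes_R (-)$ correctly realize the sections of $\calE$ and $\calF$ among which $\bv$ is known to live. Once that diagram is in place, the rest of the argument is formal.
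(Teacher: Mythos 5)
Your plan matches the paper's: reduce to a family of closed aligned subintervals, apply Lemma~\ref{L:intersection2} for the scalar-level containment, pass to sections of $\calE$ via Lemma~\ref{L:proj intersect}, then patch. The iteration over intervals $I'$ not containing $\rho$ is unnecessary (though not wrong): since $\rho$ is fixed once and for all, any closed aligned subinterval $I''$ of some $(\epsilon',1)$ with $\epsilon' < \rho$ can be enlarged to a closed aligned $I' \subseteq (\epsilon',1)$ containing $\rho$, and restricting $\bv$ from $W' \times A_K(I')$ handles $I''$; the paper exploits exactly this.

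The substantive issue is your setup for Lemma~\ref{L:proj intersect}. You declare $R := \Gamma(W' \times A_K(I'), \calO)$ and assert that Lemma~\ref{L:intersection2} ``supplies the equality $S \cap T = R$ inside $U$ verbatim.'' Neither half of that is right. First, Lemma~\ref{L:intersection2} gives only the one-sided containment
$\Gamma(W \times A_K[\rho,\rho], \calO) \cap \Gamma(A_L(I), \calO) \subseteq \Gamma(W' \times A_K(I'), \calO)$,
not an equality. Second, with your $R$ the required maps $R \to S$, $R \to T$ do not exist: since $W'$ is strictly smaller than $W$, there is no natural ring homomorphism $\Gamma(W' \times A_K(I'), \calO) \to \Gamma(W \times A_K[\rho,\rho], \calO)$ (restriction on the $W$-factor goes the wrong way), and if you instead shrink $S$ to $\Gamma(W' \times A_K[\rho,\rho], \calO)$ and $T$ to $\Gamma(A_L(I'), \calO)$ so the maps exist, then $S \cap T \supsetneq R$ in general and Lemma~\ref{L:intersection2} no longer helps. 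The way to make the step rigorous is to set $S := \Gamma(W \times A_K[\rho,\rho], \calO)$, $T := \Gamma(A_L(I), \calO)$, $U := \Gamma(A_L[\rho,\rho], \calO)$, and \emph{literally} $R := S \cap T$ inside $U$. Choosing a strict neighborhood $W_I \supseteq W$ with $W_I \times A_K(I) \subseteq Y$, one has $\Gamma(W_I \times A_K(I), \calO) \subseteq R$, so $M := \Gamma(W_I \times A_K(I), \calE) \otimes_{\Gamma(W_I \times A_K(I), \calO)} R$ is a finite locally free $R$-module with $M \otimes_R S \cong \Gamma(W \times A_K[\rho,\rho], \calE)$, $M \otimes_R T \cong \Gamma(A_L(I), \calF)$, $M \otimes_R U \cong \Gamma(A_L[\rho,\rho], \calF)$. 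Lemma~\ref{L:proj intersect} then gives $\bv \in M$, and the containment $R \subseteq \Gamma(W' \times A_K(I'), \calO)$ from Lemma~\ref{L:intersection2} yields $M \subseteq \Gamma(W' \times A_K(I'), \calE)$, which is what you wanted.
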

\begin{proof}
It suffices to show that for each closed 
aligned subinterval $I'$ of $(\epsilon,1)$
containing $\rho$,
there exists a strict neighborhood $W'$ of $]U[_K$ in $P_K$ such that
$\bv \in \Gamma(W' \times A_K(I'), \calE)$.
Choose a closed aligned interval $I$ of $(\epsilon,1)$ containing $I'$ in its interior;
by Lemma~\ref{L:intersection2}, we can choose $W'$ so that
within $\Gamma(A_L[\rho,\rho], \calO)$,
\[
\Gamma(W \times A_K[\rho,\rho], \calO)
\cap \Gamma(A_L(I),\calO) \subseteq \Gamma(W' \times A_K(I'), \calO).
\]
We may then apply Lemma~\ref{L:proj intersect} to deduce the claim.
\end{proof}

\subsection{Globalizing the break decomposition}

The main result of this subsection
(Theorem~\ref{T:break decomp2})
is a globalized version of \cite[Theorem~2.7.2]{kedlaya-swan1}.
To prove it, we use 
the following relative version of \cite[Lemma~2.7.10]{kedlaya-swan1}.
\begin{prop} \label{P:hard case}
Suppose $u_{m+1}, \dots, u_n \in \Gamma(P, \calO)$ are such that
$du_{m+1}, \dots, du_n$ freely generate $\Omega^1_{P/\gotho_K}$ over 
$\gotho_K$. Write
$\del_1, \dots, \del_{n+1}$
for the basis of derivations on $P_K \times A_K[0,1)$ over $K_0$
dual to $du_1, \dots, du_n, dt$. 
Let $Y$ be a fringed relative annulus over $]U[_P$.
Let $\calE$ be a $\nabla$-module on $Y$ which is solvable at $1$.
Let $\calF$ denote the generic fibre of $\calE$, viewed as a 
$\nabla$-module relative to $K_0$, and choose
$i \in \{1,\dots,n+1\}$ such that $\del_i$ is eventually dominant for
$\calF$. Suppose that there exist $\rho \in (0,1)$ arbitrarily close to 
$1$ such that the scale multiset for $\del_i$ on $\calF_\rho$ contains 
more than one element. Then after shrinking $U$ 
(to another open dense subscheme of $Z$) and $Y$
(to a fringed relative annulus over $]U[_P$),
$\calE$ becomes decomposable.
\end{prop}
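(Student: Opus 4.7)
The plan is to apply the absolute version \cite[Lemma~2.7.10]{kedlaya-swan1} to the generic fibre $\calF$ to produce a decomposition, and then to descend this decomposition to $\calE$ itself on a fringed relative annulus over a (possibly shrunk) $]U[_P$. More precisely, I first apply \cite[Lemma~2.7.10]{kedlaya-swan1} to $\calF$, regarded as a $\nabla$-module on $A_L(\epsilon,1)$ relative to $K_0$; the hypotheses hold by assumption, since $\del_i$ is eventually dominant for $\calF$ and the scale multiset for $\del_i$ on $\calF_\rho$ is non-constant for $\rho$ arbitrarily close to $1$. This yields $\epsilon' \in (\epsilon,1)$ and an idempotent endomorphism $e \in \Gamma(A_L(\epsilon',1), \operatorname{End}(\calF))$ commuting with the connection, inducing a nontrivial decomposition of $\calF$ on $A_L(\epsilon',1)$.

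Next, fix $\rho \in (\epsilon',1) \cap \Gamma^*$ and work with the restriction $e_\rho \in \Gamma(A_L[\rho,\rho], \operatorname{End}(\calF))$. By Definition~\ref{D:generic fibre}, every such element arises as a limit of elements of $\Gamma(P_K \times A_K[\rho,\rho], \operatorname{End}(\calE))$ after base change. Using Lemma~\ref{L:same norm}, and shrinking $U$ so that the relevant rational functions realize their $L$-Gauss norms pointwise on $]U[_P \times A_K[\rho,\rho]$, one can arrange the approximations to form a Cauchy sequence in $\Gamma(W_0 \times A_K[\rho,\rho], \operatorname{End}(\calE))$ for some strict neighborhood $W_0$ of $]U[_P$ in $P_K$, with limit (under base change to $L$) equal to $e_\rho$.

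The main obstacle is to upgrade this approximation into a genuine idempotent defined over a strict neighborhood, rather than merely a limiting object. I would handle this by a Newton-type iteration: starting from a sufficiently fine approximation $e_0$ as above, iteratively correct it by enforcing the conditions $e^2 = e$ and $[\nabla, e] = 0$, which in the appropriate Banach setting can be arranged as a contraction. The crucial convergence input is Lemma~\ref{L:strict to generic}: at each step, passing from a strict neighborhood $W$ to a slightly smaller one bounds the new sup-norm by a convex combination of the old sup-norm and the $L$-norm, so the iterates stay bounded in a common Banach space while the successive corrections shrink geometrically. The fixed point $\tilde e$ lies in $\Gamma(W \times A_K[\rho,\rho], \operatorname{End}(\calE))$ for some strict neighborhood $W \subseteq W_0$, satisfies $\tilde e^2 = \tilde e$ and $[\nabla, \tilde e] = 0$, and by uniqueness of the corresponding Fitting idempotent agrees with $e_\rho$ on the generic fibre.

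Having placed $\tilde e$ both in $\Gamma(W \times A_K[\rho,\rho], \operatorname{End}(\calE))$ and in $\Gamma(A_L(\epsilon',1), \operatorname{End}(\calF))$, Lemma~\ref{L:intersection3} extends $\tilde e$ to a section over a fringed relative annulus $Y'$ over $]U[_P$. The image and kernel of $\tilde e$ then give a decomposition of $\calE|_{Y'}$ as a direct sum of $\nabla$-submodules, which is nontrivial because it induces the nontrivial decomposition of $\calF$ on the generic fibre.
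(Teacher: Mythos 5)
Your approach is a genuine departure from the paper's. The paper re-does the construction of \cite[Lemma~2.7.10]{kedlaya-swan1} in the relative setting: it passes to a Frobenius antecedent, chooses a cyclic vector, and invokes Christol's factorization theorem for the twisted polynomial over $W \times A_K[\rho^{p^m},\rho^{p^m}]$, which produces a $\nabla$-compatible projector directly over a strict neighborhood of $]U[_P$. You instead apply \cite[Lemma~2.7.10]{kedlaya-swan1} once and for all to the generic fibre $\calF$ over $A_L(\epsilon',1)$, and then attempt to descend the resulting horizontal idempotent $e_\rho$ from $L$ to a strict neighborhood $W$ by approximation followed by a Newton iteration.

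The gap is the Newton iteration step. The iteration $e \mapsto 3e^2-2e^3$ (or any polynomial lifting of idempotents) does converge to an idempotent in the relevant Banach algebra, but there is no reason for the limit to equal $e_\rho$ after base change to $L$, because in the \emph{non-commutative} algebra $\operatorname{End}(\calF)$ the idempotent $e_\rho$ is not an attracting fixed point: linearizing at $e_\rho$, a perturbation $h$ is sent to $e_\rho h + h e_\rho - 2 e_\rho h e_\rho = e_\rho h (1-e_\rho) + (1-e_\rho) h e_\rho$, so the ``off-diagonal'' parts of $h$ survive at first order. Thus the iteration limit is an idempotent \emph{near} $e_\rho$, but generally a different one, and in particular need not commute with $\nabla$. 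The horizontality constraint $[\nabla,e]=0$ is linear, not enforceable by a polynomial contraction, and you cannot simply project onto the horizontal subspace of $\operatorname{End}(\calE)$ over $W$ — whether that subspace descends to $W$ is exactly the question being posed. The appeal to ``uniqueness of the corresponding Fitting idempotent'' does not repair this: the uniqueness you need is among \emph{horizontal} idempotents realizing the break decomposition, and horizontality of your iteration limit has not been established. The paper sidesteps all of this because the factorization supplied by Christol's theorem (applied to the twisted polynomial attached to a cyclic vector) is intrinsically $\nabla$-equivariant; the projector it yields is horizontal by construction, not by a posteriori verification, and Lemma~\ref{L:intersection3} then extends it to a fringed relative annulus exactly as you propose.

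A secondary omission: the case $i \neq n+1$ requires rotation (pulling back along $t \mapsto t^{p^N}$ and $u_i \mapsto u_i + t$), and the paper explicitly checks that these maps extend to maps of fringed relative annuli (using Lemma~\ref{L:strict to generic} to control convergence of the rotation series). Your argument implicitly delegates this to \cite[Lemma~2.7.10]{kedlaya-swan1}, but that lemma is purely over $L$ and gives no control over strict neighborhoods; the relative extension of the rotation maps would still need to be verified.
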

\begin{proof}
We first treat the case $i=n+1$.
Let $b$ be the highest break of $\calF$.
By \cite[Theorem~2.7.2 and Remark~2.7.7]{kedlaya-swan1},
we may choose $\epsilon \in (0,1)$ such that 
$\calF$ admits a break decomposition over $A_L(\epsilon,1)$,
and for all $\rho \in (\epsilon,1)$, $\del_{n+1}$ is dominant for $\calF_\rho$ 
and $T(\calF,\rho) = \rho^b$.
Pick a closed aligned interval $I \subset (\epsilon,1)$ of
positive length for which there exists a nonnegative integer $m$
such that 
\[
|p|^{p^{-m+1}/(p-1)} < T(\calF, \rho) < |p|^{p^{-m}/(p-1)} \qquad
(\rho \in I).
\]
Let $\calF_m$ be the $\nabla$-module on $A_L(I^{p^m})$ which is the
$m$-fold Frobenius antecedent of $\calF$
in the $t$-direction, as produced by \cite[Theorem~6.15]{kedlaya-mono-over},
so that 
\[
T(\calF_m,\rho^{p^m}) = T(\calF,\rho)^{p^m} < |p|^{1/(p-1)}.
\]
Since the defining inequality for Frobenius antecedents is strict,
Lemma~\ref{L:strict to generic} allows us to correspondingly construct
an $m$-fold Frobenius antecedent $\calE_m$ of $\calE$ on
$W \times A_K(I^{p^m})$ for some strict neighborhood $W$
of $]U[_P$ in $P_K$.

Choose a cyclic vector for $\calE_m$ with respect to
$\del_{n+1}$ over $\Frac \Gamma(W \times A_K(I^{p^m}), \calO)$,
and let $Q = T^d + \sum_{i=0}^{d-1}
a_i T^i$ be the corresponding twisted polynomial.
Pick $\rho \in I \cap \Gamma^*$ such that 
each $a_i$ can be written as a ratio of two elements of 
$\Gamma(W \times A_K(I^{p^m}), \calO)$, neither having any zeroes
in $A_L[\rho^{p^m}, \rho^{p^m}]$; 
the restriction excludes only finitely
many $\rho$.
By Lemma~\ref{L:same norm}, after shrinking $U$,
each of the $a_i$ becomes invertible on $]U[_P \times 
A_K[\rho^{p^m},\rho^{p^m}]$, and 
the norm of $a_i(x)$ for each  $x \in ]U[_P \times A_K[\rho^{p^m},
\rho^{p^m}]$ equals the supremum norm of $a_i$ in
$A_L[\rho^{p^m}, \rho^{p^m}]$.

Let $a_{j}$ be the coefficient at which the Newton polygon of $Q$
with respect to the supremum norm on $]U[_P \times A_K[\rho^{p^m}, \rho^{p^m}]$
has its first breakpoint (i.e., the one separating the segment of least
slope).
By a suitable application of Lemma~\ref{L:strict to generic}, 
we see that after shrinking $W$,
$a_j$ is also a breakpoint (though maybe not the first)
when computing slopes of $Q$
using the supremum norm on
$W \times A_K[\rho^{p^m},\rho^{p^m}]$.
Using Christol's factorization theorem
\cite[Theorem~2.2.2]{kedlaya-course}
for the supremum norm on $W \times A_K[\rho^{p^m},\rho^{p^m}]$
(and otherwise arguing as in \cite[Theorem~6.4.4]{kedlaya-course}),
we deduce that the factorization
of $Q$ provided by \cite[Proposition~1.1.10]{kedlaya-swan1}
that splits off the least $d-j$ slopes (counting multiplicity)
is defined over $W \times A_K[\rho^{p^m},\rho^{p^m}]$.
By performing the same argument again in the opposite twisted polynomial
ring (as in the proof of \cite[Proposition~3.3.10]{kedlaya-part3}),
we obtain a projector in $\calE_m^\dual \otimes \calE_m$
on $W \times A_K[\rho^{p^m},\rho^{p^m}]$.
This pulls back to a projector in $\calE^\dual \otimes \calE$ on
$W \times A_K[\rho,\rho]$; since the projector is already defined on
$A_L(\epsilon,1)$, by Lemma~\ref{L:intersection3} it becomes defined on $Y$
after shrinking $Y$. This proves the desired decomposability.

We now suppose $i \neq n+1$. We perform rotation as 
in the proof of \cite[Lemma~2.7.10]{kedlaya-swan1};
that is, first pull back along a map effecting $t \mapsto t^{p^N}$
for $N$ a suitably large integer,
then along a map effecting $u_i \mapsto u_i + t$. (Note that both of
these extend to maps between suitable fringed relative annuli:
by Lemma~\ref{L:strict to generic}, the series in
\cite[Definition~2.6.2]{kedlaya-swan1}
converges on some fringed relative annulus.)
As in the proof of \cite[Lemma~2.7.10]{kedlaya-swan1},
the decomposition obtained after rotation descends back to $\calE$.
\end{proof}

\begin{theorem} \label{T:break decomp2}
Let $Y$ be a fringed relative annulus over $]U[_P$.
Let $\calE$ be a $\nabla$-module on $Y$ which is solvable at $1$.
Then after shrinking $U$
(to another open dense subscheme of $Z$) and $Y$
(to a fringed relative annulus over $]U[_P$),
there exists a unique decomposition 
\[
\calE = \bigoplus_{b \in \QQ_{\geq 0}} \calE_b
\]
of $\nabla$-modules on $Y$, such that $\calE_b$ has uniform break $b$.
\end{theorem}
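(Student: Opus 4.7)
The plan is to first establish uniqueness, then prove existence by induction on $\rank \calE$, using Proposition~\ref{P:hard case} repeatedly to split off direct summands.

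For uniqueness, let $\calF$ denote the generic fibre of $\calE$ on some $A_L(\epsilon,1)$. Any decomposition $\calE = \bigoplus_b \calE_b$ into sub-$\nabla$-modules of uniform break restricts on the generic fibre to a decomposition of $\calF$ into uniform-break pieces, which by the uniqueness half of \cite[Theorem~2.7.2]{kedlaya-swan1} must coincide with the canonical break decomposition of $\calF$. The associated projectors are idempotents in $\Gamma(Y,\calE^\dual \otimes \calE)$ whose images in $\Gamma(A_L(\epsilon,1),\calF^\dual \otimes \calF)$ are thereby determined; Lemma~\ref{L:proj intersect} (applied as in the proof of Proposition~\ref{P:hard case}) then pins down the projectors themselves, yielding uniqueness.

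For existence I would induct on $\rank \calE$. By \cite[Theorem~2.7.2]{kedlaya-swan1} the generic fibre $\calF$ carries a canonical break decomposition $\calF = \bigoplus_b \calF_b$. If $\calF$ has only one break, then $\calE$ already has uniform break by Definition~\ref{D:extend Swan} and the trivial one-term decomposition works. Otherwise I would aim to split off a proper direct summand of $\calE$ and recurse on each factor. To do this, first shrink $U$ so that on some Zariski open of $P$ meeting $]U[_P$, there exist $u_{m+1},\dots,u_n \in \Gamma(P,\calO)$ with $du_{m+1},\dots,du_n$ freely generating $\Omega^1_{P/\gotho_K}$; this is possible because $P$ is smooth and puts us in the setting of Proposition~\ref{P:hard case}. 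Choose an index $i$ for which $\del_i$ is eventually dominant for $\calF$, which exists by the general theory of \cite[\S2.5]{kedlaya-swan1}. Since $\calF$ has at least two distinct breaks $b < b'$, the summands $\calF_b$ and $\calF_{b'}$ contribute scales of different sizes to the scale multiset of $\del_i$ on $\calF_\rho$ for $\rho$ close to $1$, so this multiset has more than one element for a cofinal set of $\rho$. Proposition~\ref{P:hard case} then produces a nontrivial decomposition of $\calE$ after further shrinking $U$ and $Y$; applying the induction hypothesis to each summand of strictly smaller rank concludes the proof.

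The main obstacle I foresee is the verification that an eventually dominant derivation actually has a non-constant scale multiset whenever $\calF$ has multiple breaks — this is the step that translates the existence of the generic-fibre decomposition into the hypothesis needed to invoke Proposition~\ref{P:hard case}. A secondary, more bookkeeping-style issue is arranging the coordinate hypothesis on $P$ by Zariski localization and then gluing the resulting local decompositions using uniqueness, while carefully tracking the nested shrinkings of $U$ and $Y$ through the induction.
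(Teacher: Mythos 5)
Your proposal is correct and follows essentially the same approach as the paper: localize on $Z$ to arrange the coordinate hypothesis, then use Proposition~\ref{P:hard case} to split off summands until each piece has a uniform break (the paper phrases this as reducing to the indecomposable case, whereas you induct on rank, but these are the same reduction). You also supply an explicit uniqueness argument via the generic fibre and Lemma~\ref{L:proj intersect}, which the paper's terse proof leaves implicit; this is a welcome addition rather than a divergence.
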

\begin{proof}
The claim is local on $Z$, so we may reduce to the case where
there exist $u_{m+1},\dots,u_n \in \Gamma(P,\calO)$ such that
$du_{m+1},\dots,du_n$ freely generate $\Omega^1_{P/\gotho_K}$ over $\gotho_K$.
After shrinking $U$ and $Y$,
we may reduce to the case where $\calE$ remains indecomposable after
further shrinking of $U$ and $Y$.
In this case, $\calE$ is forced to have a uniform break by
Proposition~\ref{P:hard case}.
\end{proof}

\begin{defn}
For $z \in Z$, we say that \emph{the break decomposition of $\calE$
extends across $z$} if we can choose $U$ in Theorem~\ref{T:break decomp2}
to contain $z$.
\end{defn}

We will need a criterion for detecting when the break decomposition
extends across $z$.
\begin{lemma} \label{L:extend break}
With notation as in Theorem~\ref{T:break decomp2},
let $K_\rho, L_\rho$ be the completions of $K(t), L(t)$ 
for the $\rho$-Gauss norm.
Suppose that $z \in U$ and that for each $\rho \in (\epsilon,1)$ sufficiently
close to $1$, the restriction of $\calE$ to $]U[_K \times_K K_\rho$
admits a decomposition whose restriction to $L_\rho$ coincides with
the restriction of the break decomposition of the generic fibre of
$\calE$. Then the break decomposition of $\calE$ extends
across $z$.
\end{lemma}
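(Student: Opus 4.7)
The plan is to construct horizontal idempotents $\tilde{\pi}_b$ on a fringed relative annulus over $]U[_P$ realizing the break decomposition; since $z \in U$ by hypothesis, this will show that the open dense subscheme in Theorem~\ref{T:break decomp2} may be chosen to contain $z$.

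First I would apply Theorem~\ref{T:break decomp2} to produce horizontal idempotents $\pi_b^\circ \in \End(\calE|_{Y^\circ})$ on a fringed relative annulus $Y^\circ$ over $]U^\circ[_P$ for some open dense $U^\circ \subseteq U$ (possibly not containing $z$), restricting on the generic fibre to the break projectors $\pi_b^L \in \End(\calF)$. For each $\rho \in (\epsilon,1) \cap \Gamma^*$ close to $1$, the hypothesis supplies horizontal idempotents $\tilde{\pi}_b^\rho \in \End(\calE|_{]U[_P \times_K K_\rho})$ whose further base change to $L_\rho$ equals $\pi_b^L|_{L_\rho}$. Since the generic fibre separates endomorphisms of $\calE$, the restriction of $\tilde{\pi}_b^\rho$ to $]U^\circ[_P \times_K K_\rho$ necessarily coincides with $\pi_b^\circ$.

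Next I would descend $\tilde{\pi}_b^\rho$ from $]U[_P \times_K K_\rho$ to $]U[_P \times A_K[\rho,\rho]$ using Lemma~\ref{L:proj intersect} applied to the diamond
\[
\xymatrix{
\Gamma(]U[_P \times A_K[\rho,\rho], \calO) \ar[r] \ar[d] & \Gamma(]U[_P, \calO) \widehat{\otimes}_K K_\rho \ar[d] \\
\Gamma(A_L[\rho,\rho], \calO) \ar[r] & L_\rho
}
\]
with $M = \End(\calE|_{]U[_P \times A_K[\rho,\rho]})$. The element $\tilde{\pi}_b^\rho$ lies in $M \otimes_R S$ by the hypothesis, and its image in the lower-right corner equals $\pi_b^L|_{L_\rho}$ coming from $\End(\calF|_{A_L[\rho,\rho]}) = M \otimes_R T$; thus Lemma~\ref{L:proj intersect} produces the desired descent $\tilde{\pi}_b \in M$, contingent on the ring identity $S \cap T = R$. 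This identity should reduce to a Laurent-series comparison: an element of $\Gamma(]U[_P, \calO) \widehat{\otimes}_K K_\rho$ that also lies in $\Gamma(A_L[\rho,\rho], \calO)$ may be expanded as $\sum_{i \in \ZZ} c_i t^i$ with $c_i$ determined both as an element of $L$ (by the $t$-Laurent expansion via $T$) and as an element of $\Gamma(]U[_P, \calO)$ (by the $t$-Laurent expansion via $S$), forcing $c_i \in \Gamma(]U[_P, \calO)$ with $|c_i|_{\sup,]U[_P}\rho^i \to 0$.

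Finally I would glue the descended $\tilde{\pi}_b$ on $]U[_P \times A_K[\rho,\rho]$ with $\pi_b^\circ$ on $W^\circ \times A_K[\rho,\rho]$, where $W^\circ$ is a strict neighborhood of $]U^\circ[_P$ on which $\pi_b^\circ$ is defined: the two agree on $]U^\circ[_P \times A_K[\rho,\rho]$, hence on the entire overlap by generic-fibre separation, yielding an idempotent defined on $W \times A_K[\rho,\rho]$ for a suitable strict neighborhood $W$ of $]U[_P$ in $P_K$ (chosen so that admissibility of $\{W, ]Z \setminus U[_P\}$ holds). Lemma~\ref{L:intersection3} then extends this idempotent to a fringed relative annulus $Y'$ over $]U[_P$; the relations $\tilde{\pi}_b^2 = \tilde{\pi}_b$, $\tilde{\pi}_b \tilde{\pi}_{b'} = 0$ for $b \neq b'$, $\sum_b \tilde{\pi}_b = 1$, and horizontality are closed conditions that hold on the core of $Y^\circ$, hence persist on $Y'$ by separation. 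The main obstacle will be Step~2 --- verifying the intersection identity $S \cap T = R$ for the diamond above --- which requires some care in reconciling the two completed tensor products with the Gauss-norm structure on $L_\rho$; the remaining steps are a fairly routine chaining of Lemmas~\ref{L:proj intersect} and \ref{L:intersection3}.
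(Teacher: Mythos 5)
Your overall strategy matches the paper's: obtain the break idempotents over the tube $]U[_P$ by intersecting the given decomposition over $]U[_P \times_K K_\rho$ with the generic-fibre decomposition, using Lemma~\ref{L:proj intersect}, and then conclude. The difference in route is substantive, though. The paper's proof uses the intersection identity
\[
\Gamma(]U[_K \times_K K_\rho, \calO) \cap \Gamma(A_L(I),\calO) = \Gamma(]U[_K \times A_K(I), \calO)
\]
inside $L_\rho$ for \emph{every} closed aligned interval $I \subset (\epsilon,1)$ containing $\rho$, and applies Lemma~\ref{L:proj intersect} for each such $I$. Taking the union over $I$ delivers the idempotents on the whole relative annulus $]U[_P \times A_K(\epsilon,1)$ in one stroke. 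You instead take only the degenerate interval $[\rho,\rho]$; this leaves you with the idempotents at a single radius, which is why you then need the extra gluing and the appeal to Lemma~\ref{L:intersection3}. Both routes assert an intersection identity without proof, so that part is on equal footing.

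The gap is in your Step~4. You claim the glued idempotent lives on $W \times A_K[\rho,\rho]$ for $W$ a strict neighborhood of $]U[_P$, effectively taking $W = ]U[_P \cup W^\circ$. But $\{]U[_P \cup W^\circ,\ ]Z\setminus U[_P\}$ being an admissible covering of $P_K$ does not follow from what you have: refining the known admissible covering $\{W^\circ,\ ]Z \setminus U^\circ[_P\}$ would require $\{]Z \setminus U[_P,\ ]U \setminus U^\circ[_P\}$ to be an admissible covering of $]Z \setminus U^\circ[_P$, which is exactly a non-overconvergence statement and fails in general (it works if $U \setminus U^\circ$ is closed in $Z \setminus U^\circ$ --- e.g.\ when $Z$ is a curve, which is the case in the paper's applications --- but not otherwise, since $U \setminus U^\circ$ can accumulate on $Z \setminus U$). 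Near $]z'[_P$ for $z' \in U \setminus U^\circ$ your glued section is only known on the tube itself, not on any overconvergent neighborhood --- and an overconvergent neighborhood is precisely the hypothesis of Lemma~\ref{L:intersection3}. Using the paper's general-$I$ form of the intersection identity sidesteps the need to manufacture a strict neighborhood at this stage. Also a small caution on Step~2/3: not every element of $S = \Gamma(]U[_P,\calO)\widehat{\otimes}_K K_\rho$ admits a $t$-Laurent expansion (since $K_\rho \supsetneq \Gamma(A_K[\rho,\rho],\calO)$), so your ``Laurent-series comparison'' should really start from the fact that $f \in T$ already supplies the expansion, and then argue that the coefficients land in $\Gamma(]U[_P,\calO)$ with the right convergence; as written the logic is slightly backwards.
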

\begin{proof}
For each closed interval $I \subset (\epsilon,1)$ containing $\rho$,
inside $L_\rho$ we have
\[
\Gamma(]U[_K \times_K K_\rho, \calO) \cap
\Gamma(A_L(I), \calO) = \Gamma(]U[_K \times A_K(I), \calO).
\]
We may thus deduce the claim from Lemma~\ref{L:proj intersect}.
\end{proof}

\section{Representations, isocrystals, and conductors}

In this section, we define the differential highest break and
Swan conductor
associated to an isocrystal on a $k$-variety $X$ and
a boundary divisor in some compactification of $X$ along which the isocrystal
is overconvergent. We then show how a
special class of overconvergent isocrystals, those admitting
unit-root Frobenius actions, relate closely
to representations of the \'etale fundamental group of $X$.
This allows us to define differential ramification breaks and Swan conductors
for an appropriate class of $p$-adic representations, including
discrete representations (those with open kernel).

\setcounter{theorem}{0}
\begin{convention}
For the rest of this 
paper, a \emph{variety over $k$} will be a reduced separated
(but not necessarily irreducible) scheme of finite type over $k$,
and \emph{points} of a variety will always be closed points 
unless otherwise specified.
\end{convention}

\subsection{Convergent and overconvergent isocrystals}

This is not the place to reintroduce the full theory of convergent and
overconvergent isocrystals; we give here merely a quick summary.
See \cite{kedlaya-part1} for a less hurried
review, or \cite{berthelot} for a full development.

\begin{defn} \label{D:overconvergent}
Let $P$ be an affine formal scheme of finite type over $\Spf \gotho_K$ with
special fibre $Y$. Let $X$ be an open dense subscheme of $Y$ 
such that $Z = Y \setminus X$ is of pure codimension 1 in $Y$,
and $P$ is smooth over $\gotho_K$ in a neighborhood of $X$;
let $Q$ be the open formal subscheme of $P$ with special fibre $X$.
An \emph{isocrystal on $X$ overconvergent along $Z$}
is a $\nabla$-module $\calE$ relative to $K_0$
on a strict neighborhood of $]X[_P$ in $P_K$,
whose formal Taylor isomorphism converges on a strict neighborhood of
$]X[_{P \times P}$ in $P_K \times P_K$;
morphisms between these should likewise be defined on some strict 
neighborhood.
This definition turns out to be canonically independent of the choices of
$P$, so extends to arbitrary pairs $(X, Y)$ where $X$ is an
open dense subscheme of $Y$ smooth over $k$,
and $Y \setminus X$ is of pure codimension 1 in $Y$.
(The codimension 1 condition can be eased with a bit more work.)
If $Y$ is proper, then the category of isocrystals on  $X$
overconvergent along $Y \setminus X$ is independent of the choice of $Y$;
we call such objects \emph{overconvergent isocrystals} on $X$.
If on the other hand $Y = X$, we say $\calE$ is a \emph{convergent isocrystal}
on $X$.
\end{defn}

\begin{remark} \label{R:relative}
The usual definition of an isocrystal involves a $\nabla$-module relative to
$K$, not $K_0$. In fact, there is no harm in adding this
extra data:
the Taylor isomorphism is determined by the connection relative to $K$,
so it is harmless to carry the extra components of the connection through
the arguments in \cite{berthelot}. 
The construction relative to a subfield is useful 
for certain arguments where one wants to reduce the dimension
of a variety without losing critical data about the connection.
See Theorem~\ref{T:polyannuli convex2} for an argument of this form.
\end{remark}

\begin{defn} \label{D:Frobenius struct}
Let $\phi_K$ be a $q$-power Frobenius lift on $K$ acting on $K_0$;
that is, $\phi_K$ is an isometric endomorphism of $K$ acting on $K_0$,
and its action on $k$ is the $q$-power absolute Frobenius. 
With notation as in Definition~\ref{D:overconvergent}, a 
\emph{Frobenius structure}
on an isocrystal $\calE$ on $X$ overconvergent along $Z$
is an isomorphism $F: \phi^* \calE
\cong \calE$, for $\phi$ a $\phi_K$-semilinear $q$-power
Frobenius lift on $Q$;
note that $\phi$ extends to a strict neighborhood of $Q_K$ in $P_K$, so that
it makes sense to require $F$ to be an isomorphism of overconvergent 
isocrystals.
The word \emph{$F$-isocrystal} is shorthand for \emph{isocrystal with
Frobenius structure}.
\end{defn}

\begin{prop} \label{P:full faith}
Let $X \hookrightarrow Y$ be an open immersion of $k$-varieties with dense
image, with $X$ smooth and $Y \setminus X$ of pure codimension $1$ in $Y$. Then
the restriction functor from the category of $F$-isocrystals
on $X$ overconvergent along $Y \setminus X$ to the category
of convergent $F$-isocrystals on $X$ is
fully faithful.
\end{prop}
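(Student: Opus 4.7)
The plan is to reduce Proposition~\ref{P:full faith} to a statement about horizontal Frobenius-invariant sections, and then to propagate such sections from the convergent tube outward to a strict neighborhood using the Frobenius structure. By considering the internal Hom $\calHom(\calE_1,\calE_2) \cong \calE_1^\dual \otimes \calE_2$, which is again an $F$-isocrystal on $X$ overconvergent along $Z = Y \setminus X$, a morphism in either category corresponds bijectively to an $F$-invariant horizontal section of the internal Hom. So it suffices to show that for every $F$-isocrystal $\calE$ on $X$ overconvergent along $Z$, the restriction map
\[
\{\text{$F$-invariant horizontal sections of $\calE$ on some strict neighborhood of $]X[_Q$}\} \longrightarrow \{\text{$F$-invariant horizontal sections of $\calE$ on $]X[_Q$}\}
\]
is bijective.

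Injectivity (faithfulness) is immediate: $]X[_Q$ is Zariski-dense in every rigid strict neighborhood of itself in $P_K$, so any horizontal section vanishing on $]X[_Q$ vanishes on its entire domain of definition.

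For surjectivity (fullness), fix an $F$-invariant horizontal section $s$ on $]X[_Q$. Choose a strict neighborhood $W_0$ of $]X[_Q$ on which both $\phi$ and the Frobenius structure $F : \phi^*\calE \to \calE$ are defined. The Frobenius lift $\phi$ acts as a contraction on $W_0$ toward $]X[_Q$, in the sense that $\phi(W_0)$ is a strict neighborhood properly contained in $W_0$, and the iterates $\phi^n(W_0)$ shrink to $]X[_Q$. The Taylor isomorphism for the convergent isocrystal $\calE|_Q$ extends $s$ to a horizontal section on some (a priori small) neighborhood $V \subseteq W_0$ of $]X[_Q$. One then uses the Frobenius identity $s = F(\phi^* s)$ to propagate: because $\phi$ maps $\phi^{-1}(V) \cap W_0$ into $V$, the expression $F \circ \phi^* s$ makes sense on $\phi^{-1}(V) \cap W_0$, a strictly larger strict neighborhood, and agrees with $s$ on $]X[_Q$ by $F$-invariance. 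Iterating produces an increasing chain of strict neighborhoods on which $s$ extends coherently (by density of $]X[_Q$ and uniqueness of horizontal extensions), and the union is a genuine strict neighborhood of $]X[_Q$.

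The main obstacle is the technical bookkeeping required to verify that the Frobenius iterates $\phi^{-n}(V) \cap W_0$ really do exhaust a strict neighborhood of $]X[_Q$ and that the successive extensions glue into a single section; this amounts to a standard analysis of the dynamics of $\phi$ on the tower of strict neighborhoods of $]X[_Q$, essentially the argument already used by Kedlaya and Tsuzuki for analogous full faithfulness statements (cf.\ \cite[\S 5]{kedlaya-part1}). Once the Frobenius-propagation step is in place, uniqueness of the extension (from the injectivity established above) gives the desired bijection and completes the proof.
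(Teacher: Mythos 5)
The paper's own proof is a one-line reduction: observe that it suffices to check the assertion for $\nabla$-modules relative to $K$, and then cite \cite[Theorem~4.2.1]{kedlaya-part2}, which is precisely the statement in that setting. Your proposal instead attempts to re-derive that cited theorem from scratch via the Frobenius-propagation mechanism (extend a horizontal $F$-invariant section of the internal Hom from the tube outward by iterating $s \mapsto F(\phi^* s)$). That is indeed the method Tsuzuki and Kedlaya use for results of this type, so the conceptual outline is pointed in the right direction, but you explicitly stop at ``the main obstacle is the technical bookkeeping'' and never carry it out. That bookkeeping --- showing that $\phi^{-n}(V) \cap W_0$ exhausts a strict neighborhood of $]X[_Q$, that the successive extensions glue, and that the result is itself a strict neighborhood rather than merely an admissible open --- is the actual content of the theorem, so the argument as written is not a proof but a summary of one.

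There is also a smaller point you do not address. Per Remark~\ref{R:relative}, the isocrystals in this paper carry a connection relative to $K_0$, a subfield of $K$, so the extra derivations $\del_1, \ldots, \del_m$ are additional data on top of the usual (relative to $K$) isocrystal structure. The Frobenius structure and the Taylor isomorphism interact with the connection relative to $K$; to get full faithfulness for the $K_0$-relative objects, one needs to observe separately that once a morphism extends horizontally relative to $K$ to a strict neighborhood, it automatically commutes with the extra derivations there as well (by density of $]X[_Q$, since it commutes with them on the tube). The paper encodes this in the phrase ``it suffices to check relative to $K$''; your proposal passes over it silently. Neither issue is a wrong turn --- your route would work if completed --- but as it stands the proposal reconstructs a sketch of the proof of the theorem the paper simply cites, without closing either gap.
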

\begin{proof}
It suffices to check relative to $K$, in which case this assertion becomes
\cite[Theorem~4.2.1]{kedlaya-part2}.
\end{proof}

\begin{defn}
With notation as in Definition~\ref{D:Frobenius struct}, we say that
$\calE$ is \emph{unit-root} if for each closed point $x \in X$, the
pullback of $\calE$ to $x$, which we may view as a finite dimensional 
$K$-vector
space $V_x$ equipped with a $\phi_K$-semilinear endomorphism $\phi$, admits
a $\gotho_K$-lattice $T$ such that $\phi$ induces an isomorphism $\phi_K^*(T)
\cong T$.
\end{defn}

\subsection{Globalizing the Swan conductor}
\label{subsec:globalization}

Much as the calculations on relative annuli in \cite[\S 3]{kedlaya-part1} were used
later therein
to define notions of constant/unipotent local monodromy for overconvergent
isocrystals, we can define differential Swan conductors for overconvergent
isocrystals as follows. 
(See \cite[\S 4]{kedlaya-part1} for a similar construction.)

\begin{defn} \label{D:geometric Swan}
Let $\overline{X}$ 
be a smooth $k$-variety, let $Z$ be a smooth irreducible
divisor on $\overline{X}$,
and let $\calE$ be an isocrystal on $X = \overline{X} \setminus Z$
overconvergent along $Z$. Suppose for the moment that 
there exists a smooth irreducible affine formal scheme $Q$ over $\Spf
\gotho_K$
with $Q_k \cong \overline{X}$; then $\calE$ can be 
realized as a $\nabla$-module
on some strict neighborhood $V$ of $]X[_Q$ in  $Q_K$,
as in Definition~\ref{D:overconvergent}.
Moreover, $]Z[_Q$ is a relative annulus by Berthelot's fibration theorem  
\cite[Th\'eor\`eme~1.3.7]{berthelot},
\cite[Proposition~2.2.9]{kedlaya-part1}, 
as then is $W = V \cap ]Z[_Q$ after appropriately shrinking $V$.
The overconvergence property forces 
the restriction of $\calE$ to $W$ to be solvable at 1, so $\calE$ admits
a break multiset and Swan conductor (relative to $K_0$)
via Definition~\ref{D:extend Swan}.

Now go back and note that the construction persists under restricting from $X$
to an open neighborhood of any given point of $Z$. 
Moreover, by Remark~\ref{R:extend Swan}, there is no dependence on how
$]Z[_Q$ is viewed as a relative annulus. (This implies independence from 
the choice of $Q$ itself, since $Q$ is unique up to noncanonical isomorphism
by \cite[Proposition~1.4.3]{arabia}.)
Consequently,
the definitions
extend unambiguously even if $\overline{X}$ is reducible or
does not lift globally. We write $b_i(\calE,Z)$ and $\Swan(\calE, Z)$ for the 
differential ramification
breaks (listed in decreasing order as $i$ increases) and
differential Swan conductor of $\calE$ along $Z$.
\end{defn}

\begin{remark} \label{R:diff Swan}
If $k$ is perfect,
$X$ is a smooth irreducible $k$-variety, 
$\calE$ is an overconvergent isocrystal on $X$,
and $v$ is any divisorial valuation on the function field $k(X)$ over $k$,
then we can also define the break multiset and Swan conductor of
$\calE$ along $v$, by blowing up into the case where $v$ is centered
on a generically 
smooth divisor, then applying Definition~\ref{D:geometric Swan}.
If $k$ is imperfect, then the previous discussion applies unless
blowing up gives a divisor which is geometrically nonreduced.
If $\calE$ is only overconvergent along the boundary of some
partial compactification $\overline{X}$ of $X$, then the previous
discussion applies to divisorial valuations which are centered on
$\overline{X}$. (That is, there must exist some blowup of $\overline{X}$
on which the valuation corresponds to the order of vanishing along an
irreducible divisor.)
\end{remark}

\subsection{\'Etale fundamental groups and unit-root isocrystals}

\begin{hypothesis} \label{H:etale1}
Throughout this subsection,
fix a power $q$ of $p$, and assume that the field $k = k_0$ 
is perfect and contains 
$\FF_q$. Assume also that $K = K_0$ is discretely valued, and comes
equipped with a $q$-power Frobenius lift $\phi_K$.
Let $K^\phi$ denote the fixed field of $K$ under $\phi$;
it is a complete discretely valued field with residue field $\FF_q$.
\end{hypothesis}

\begin{hypothesis}\label{H:etale2}
Throughout this subsection,
let $X$ be a smooth irreducible 
$k$-variety and let $\overline{x}$ be a geometric
point of $X$. We write $\pi_1(X, \overline{x})$ for the \'etale fundamental
group of $X$ with basepoint $\overline{x}$.
\end{hypothesis}

\begin{convention}
By a \emph{$p$-adic representation} of $\pi_1(X, \overline{x})$, we will 
mean a continuous homomorphism $\rho: \pi_1(X, \overline{x}) \to \GL(V)$
for $V = V(\rho)$ a finite dimensional $K^\phi$-vector space.
\end{convention}

The following result is due to Crew \cite[Theorem~2.1]{crew}.
\begin{theorem} \label{T:crew functor}
There is a natural equivalence of categories 
(functorial in $X$)
between the category of $p$-adic representations of $\pi_1(X, \overline{x})$
and the category of convergent unit-root $F$-isocrystals on $X$.
\end{theorem}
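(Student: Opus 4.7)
The plan is to construct functors in both directions and verify they are quasi-inverse. For the forward direction, starting from a continuous representation $\rho: \pi_1(X, \overline{x}) \to \GL(V)$, I would fix a $\pi_1$-stable $\gotho_{K^\phi}$-lattice $T \subset V$; letting $\pi$ denote a uniformizer of $K^\phi$, each quotient $T/\pi^n T$ is a finite $\pi_1$-module factoring through a finite étale Galois cover $X_n \to X$ with group $G_n$. Using smoothness of $X$ and the infinitesimal lifting property for étale morphisms, each $X_n$ lifts canonically, locally on $X$, to a finite étale cover $P_n \to P$ of a smooth formal lift $P$ of $X$ over $\Spf \gotho_K$. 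Twisting $T/\pi^n T$ along the $G_n$-torsor $P_n \to P$ produces a locally free sheaf on $P$; taking the inverse limit in $n$, inverting $\pi$, and extending scalars from $K^\phi$ to $K$ yields a convergent isocrystal on $X$. The unique lift of the $q$-power Frobenius on $P$ to $P_n$ compatible with $\phi_K$ equips this isocrystal with a Frobenius structure, and the unit-root condition is automatic because the construction began from a lattice.

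For the reverse direction, given a convergent unit-root $F$-isocrystal $\calE$ on $X$, I would form the étale sheaf on $X$ whose sections over an étale morphism $X' \to X$ consist of the horizontal sections of the pullback of $\calE$ to a lift of $X'$ that are also fixed by Frobenius. The main claim is that this is a lisse étale sheaf of $K^\phi$-vector spaces of rank $\rank \calE$; its stalk at $\overline{x}$ is the desired $p$-adic representation, on which $\pi_1(X, \overline{x})$ acts by monodromy (equivalently, by deck transformations on the trivializing étale cover). The two constructions are inverse essentially by inspection on $P_n$: the isocrystal produced in the forward direction visibly trivializes after pullback to $P_n$, and its horizontal Frobenius-fixed sections recover $T/\pi^n T$ compatibly with $G_n$-actions. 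Full faithfulness reduces to applying this recipe to the internal Hom $\calE^\dual \otimes \calE'$, whose horizontal Frobenius-fixed sections then compute precisely the $\pi_1$-equivariant maps between the associated representations.

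The principal obstacle is establishing the lisseness and correct rank of the sheaf of horizontal Frobenius-fixed sections, i.e., showing that the equation $F(v) = v$ admits $\rank \calE$ linearly independent solutions étale-locally. This is handled by reducing modulo $\pi$: on a unit-root crystal, the resulting equation is of Artin--Schreier type, whose solutions parametrize a Lang torsor for $\GL_n$ over $X$ and hence trivialize after a connected étale cover. One then lifts by successive approximation: given $v_n$ with $F(v_n) \equiv v_n \pmod{\pi^n}$, writing $F(v_n + \pi^n w) = v_n + \pi^n w$ yields a residual Artin--Schreier equation for $w$ modulo $\pi$ that is again solvable étale-locally, and the unit-root assumption guarantees the invertibility of $F$ on the lattice that is needed to carry out the iteration. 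The successive approximations converge, providing the required genuine horizontal Frobenius-fixed solutions, and by construction the sheaf they define is the one whose stalk at $\overline{x}$ inverts the forward construction.
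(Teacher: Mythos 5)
The paper gives no proof of this statement; it simply cites the result as Crew's theorem \cite{crew}. Your proposal correctly reproduces the standard argument (essentially Katz's for unit-root $F$-crystals, which Crew extends): the forward direction by twisting along finite \'etale Galois covers trivializing the successive quotients $T/\pi^n T$, and the reverse direction by sheaves of horizontal Frobenius-fixed sections, with lisseness established by reducing modulo $\pi$ to a Lang torsor and then lifting via successive approximation under the unit-root hypothesis.
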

Crew also posed the question of identifying which $p$-adic representations
correspond to overconvergent unit-root $F$-isocrystals on $X$. For $X$ a curve,
this was answered by Tsuzuki \cite{tsuzuki-finite}; the hard work in the
general case is already present in Tsuzuki's work. 
All we need to add is a bit of analysis
of extendability for overconvergent isocrystals, from \cite{kedlaya-part1}.

\begin{defn}
Let $v$ be a divisorial valuation on the function field $k(X)$ over $k$,
and let $k(X)_v$ be the completion of $k(X)$ under $v$.
Fix a separable closure $k(X)_v^{\sep}$ of $k(X)_v$
and a perfect closure $k(X)_v^{\alg}$ of $k(X)_v^{\sep}$,
and let $\overline{x}$ be the geometric point of $X$
corresponding to the inclusion $k(X) \hookrightarrow k(X)_v^{\alg}$.
Put $\eta = \Spec k(X)_v$; then the morphism
$\eta \to X$ 
corresponding to the inclusion $k(X) \hookrightarrow 
k(X)_v$ induces a homomorphism
$\iota: \pi_1(\eta, \overline{x}) \to \pi_1(X, \overline{x})$,
and the former group may be canonically identified with
$\Gal(k(X)_v^{\sep}/k(X)_v)$.
Let $I_v$ be the inertia subgroup of 
$\Gal(k(X)_v^{\sep}/k(X)_v)$, i.e., the subgroup acting trivially
on the residue field of $k(X)_v^{\sep}$;
we refer to any subgroup of $\pi_1(X, \overline{x})$
conjugate to $\iota(I_v)$ as an \emph{inertia subgroup}
corresponding to $v$.
\end{defn}

\begin{defn}
We say a $p$-adic representation $\rho$ of $\pi_1(X, \overline{x})$ is
\emph{unramified} if every inertia subgroup of $\pi_1(X, \overline{x})$
lies in the kernel of $\rho$.
If $X$ admits a dense
 open immersion into a smooth proper irreducible $k$-variety
$\overline{X}$ (as would be ensured
by a suitably strong form of resolution of singularities in positive 
characteristic), then by Zariski-Nagata purity \cite[Expos\'e~X,
Th\'eor\`eme~3.1]{sga1},
$\rho$ is unramified if and only if $\rho$ factors through
$\pi_1(\overline{X}, \overline{x})$.
We say $\rho$ is 
\emph{potentially unramified} if there exists a finite \'etale cover
$Y$ of $X$ such that for any geometric point $\overline{y}$ of $Y$ 
over $\overline{x}$, the restriction of $\rho$ to $\pi_1(Y, \overline{y})$
is unramified (it suffices to check for a single $\overline{y}$).
\end{defn}

\begin{theorem} \label{T:crew functor2}
The functor of Theorem~\ref{T:crew functor} induces an equivalence between
the category of potentially unramified $p$-adic representations
of $\pi_1(X, \overline{x})$,
and the category of overconvergent unit-root $F$-isocrystals on $X$.
\end{theorem}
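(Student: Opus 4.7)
The plan is to combine the full faithfulness afforded by Proposition~\ref{P:full faith} with Tsuzuki's theorem \cite{tsuzuki-finite} applied locally at each boundary divisor and the extendability machinery of \cite[\S 3]{kedlaya-part1}, so that the only substantive work lies in identifying the essential image of the restriction functor as the potentially unramified representations. Full faithfulness is immediate: by Proposition~\ref{P:full faith}, the restriction from overconvergent to convergent unit-root $F$-isocrystals is fully faithful, and composing with Theorem~\ref{T:crew functor} gives a fully faithful functor into $p$-adic representations of $\pi_1(X, \overline{x})$; it remains to match essential images.

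For the forward direction (overconvergent implies potentially unramified), let $\calE$ be an overconvergent unit-root $F$-isocrystal on $X$ with associated representation $\rho$. For each divisorial valuation $v$ on $k(X)$ centered outside $X$, use Remark~\ref{R:diff Swan} to assume (after a blowup if needed) that $v$ is the order of vanishing along a smooth divisor; completion along the generic point of that divisor realizes $\calE$ locally as an overconvergent unit-root $F$-isocrystal on an annulus over the complete local field at $v$, and Tsuzuki's theorem then forces $\rho(I_v)$ to be finite. Invoking de Jong's alteration theorem, we may reduce to the case where $X$ sits in a smooth proper $\overline{X}$ with boundary $D_1, \dots, D_r$ a simple normal crossings divisor; the finiteness of each $\rho(I_{D_i})$ together with continuity of $\rho$ lets us choose an open subgroup $H \subseteq \pi_1(X, \overline{x})$ with $H \cap I_{D_i} \subseteq \ker \rho$ for every $i$. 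The finite \'etale cover $Y \to X$ defined by $H$ then has $\rho|_{\pi_1(Y, \overline{y})}$ trivial on each inertia subgroup above the $D_i$, and Zariski-Nagata purity (applied to a smooth compactification of $Y$) upgrades this to genuine unramifiedness.

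For the reverse direction (potentially unramified implies overconvergent), let $\rho$ be potentially unramified via a finite \'etale $f: Y \to X$, and let $\calE$ be the convergent unit-root $F$-isocrystal associated to $\rho$ by Theorem~\ref{T:crew functor}. Since $\rho|_{\pi_1(Y, \overline{y})}$ is unramified, the local inertia action on $f^* \calE$ at every boundary divisor of $Y$ is trivial, so locally near each such divisor $f^*\calE$ extends overconvergently: the unit-root structure with trivial inertia descends to an $F$-isocrystal on the residue variety and extends to a constant sheaf on the formal neighborhood of the tube. The extendability results of \cite[\S 3]{kedlaya-part1} then glue these local overconvergent extensions into a global overconvergent structure on $f^*\calE$; Galois descent along $f$ (after replacing $Y$ with its Galois closure over $X$ if needed), which is compatible with the Galois action already present on the underlying convergent $\calE$, yields the desired overconvergent structure on $\calE$ itself.

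The hardest step is the gluing and descent in the reverse direction: passing from purely local inertia data at divisorial valuations to a genuine global overconvergent structure requires the careful analysis of strict neighborhoods developed in \cite{kedlaya-part1}, and the situation is further complicated by the potential absence of smooth compactifications in positive characteristic, which forces us to rely on de Jong's alterations and systematic \'etale descent both in this step and in the compactification arguments of the forward direction.
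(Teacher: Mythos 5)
Your forward direction is reasonable in outline but differs from the paper's cleaner argument: the paper chooses a $\rho$-stable lattice $T$, builds a single finite \'etale Galois cover $Y \to X$ from the kernel of the action on $T/2pT$, and appeals to \cite[Proposition~7.2.1]{tsuzuki-duke} to conclude at once that $\pi_1(Y,\overline{y})$ meets every inertia subgroup inside $\ker\rho$ --- no case-by-case valuation analysis, no appeal to resolution of the boundary, and no worry about whether de Jong's \emph{alterations} (which are not finite \'etale) give back an honest cover of $X$. Your divisor-by-divisor version has to reconcile the alteration with the requirement of a finite \'etale $Y$ in the definition of ``potentially unramified,'' and you do not address that.

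The reverse direction has a genuine gap. You assert that trivial inertia at each boundary divisor of $Y$ makes $f^*\calE$ extend overconvergently ``locally near each such divisor,'' and that these local extensions glue. But overconvergence is not a condition that can be verified divisor-by-divisor on local inertia and then patched: the honest argument needs a global smooth proper model on which the convergent isocrystal lives (so that convergence on the proper model \emph{is} overconvergence on the open part), and $Y$ may not admit one. The paper handles this by shrinking to an open dense $U \subseteq X$, applying de Jong to $f^{-1}(U)$ to get $g: Z \to f^{-1}(U)$ with a smooth proper $\overline{Z}$, restricting the corresponding convergent isocrystal on $\overline{Z}$ (automatically overconvergent, by properness), pushing forward to $U$, and then cutting out the desired object via the adjunction/trace projector together with Proposition~\ref{P:full faith}; finally, and crucially, it extends from $U$ to all of $X$ via \cite[Proposition~5.3.7]{kedlaya-part1}. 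Your write-up neither produces the auxiliary $Z$ with good compactification nor invokes the extension-from-a-dense-open step; the ``extendability machinery of \cite[\S 3]{kedlaya-part1}'' you cite is about relative annuli and is not the relevant tool. Your Galois-descent remark is a reasonable alternative to the projector trick, but it still presupposes a globally overconvergent $f^*\calE$, which is exactly what has not been constructed.
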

\begin{proof}
We first show that every representation $\rho$ corresponding to an 
overconvergent unit-root $F$-isocrystal is potentially unramified.
Choose a $\rho$-stable 
$\gotho_{K^\phi}$-lattice $T$ in $V = V(\rho)$; then there is a unique
finite \'etale Galois cover $Y$ of $X$ such that for any
geometric point $\overline{y}$ of $Y$ over $\overline{x}$,
$\pi_1(Y, \overline{y})$ equals the kernel of the action of $\rho$
on $T/2 p T$. By \cite[Proposition~7.2.1]{tsuzuki-duke}, the
intersection of $\pi_1(Y, \overline{y})$ with any inertia subgroup
of $\pi_1(X, \overline{x})$ belongs to the kernel of $\rho$; hence
$\rho$ is potentially unramified.

We next show that every potentially unramified $\rho$ corresponds to
an overconvergent unit-root $F$-isocrystal.
Let $\calE$ be the convergent unit-root $F$-isocrystal on $X$
corresponding to $\rho$.
Choose a finite \'etale Galois cover $f: Y \to X$ such that
for any geometric point $\overline{y}$ of $Y$ over $\overline{x}$,
the restriction of $\rho$ to $\pi_1(Y, \overline{y})$ is unramified.
By de Jong's alterations theorem \cite[Theorem~4.1]{dejong}, 
there exists an open dense subscheme $U$ of $X$ and a finite
\'etale cover $g: Z \to f^{-1}(U)$ such that $Z$ admits a dense open immersion
into a smooth proper $k$-variety $\overline{Z}$.
There is no harm in moving the basepoints $\overline{x}$ and $\overline{y}$
so that $\overline{x} \in U$; then for any geometric point $\overline{z}$
of $Z$ over $\overline{x}$,
the restriction of $\rho$ to $\pi_1(Z, \overline{z})$ is again unramified,
so factors through $\pi_1(\overline{Z}, \overline{z})$.

By Theorem~\ref{T:crew functor}, this restriction of $\rho$ corresponds to
a convergent unit-root
$F$-isocrystal $\calF$
on $\overline{Z}$. Since $\overline{Z}$ is proper,
there is no distinction between convergent and overconvergent 
on $\overline{Z}$, so we may restrict $\calF$ to an overconvergent
$F$-isocrystal on $Z$. Now put $\calG = f_* g_* \calF$, which is 
an overconvergent unit-root $F$-isocrystal on $U$ (see \cite[\S 5]{tsuzuki-duke}
for the pushforward construction). Let $\sigma$ be the $p$-adic
representation of $\pi_1(U, \overline{x})$
corresponding to $\calG$; then adjunction and trace give
$\pi_1$-equivariant maps $V(\rho) \to V(\sigma) \to V(\rho)$
whose composition is the identity. Composing the other way gives a
projector on $V(\sigma)$, corresponding to a projector on $\calG$ in the
category of convergent unit-root $F$-isocrystals on $U$.
By Proposition~\ref{P:full faith}, this projector actually exists in the
overconvergent category; its image is an overconvergent unit-root
$F$-isocrystal on $U$ which becomes isomorphic to
$\calE$ in the convergent category.
By \cite[Proposition~5.3.7]{kedlaya-part1}, that isomorphism ensures that
$\calE$ is the restriction to $U$ of an overconvergent unit-root $F$-isocrystal
on $X$, as desired.
\end{proof}

Theorem~\ref{T:crew functor2} can also be stated for partially overconvergent
isocrystals.

\begin{defn}
Let $X \hookrightarrow \overline{X}$ 
be an open immersion of $k$-varieties with dense
image, with $X$ smooth irreducible.
 We say a $p$-adic representation $\rho$ of $\pi_1(X, \overline{x})$ is
\emph{unramified on $\overline{X}$} 
if every inertia subgroup of $\pi_1(X, \overline{x})$
corresponding to a divisorial valuation centered on $\overline{X}$
lies in the kernel of $\rho$. We say $\rho$ is \emph{potentially unramified
on $\overline{X}$} 
if there exists a connected finite cover $f: \overline{Y} \to \overline{X}$
\'etale over $X$, such that for any geometric point $\overline{y}$ of
$Y = f^{-1}(X)$, the restriction of $\rho$ to $\pi_1(Y, \overline{y})$
is unramified on $\overline{Y}$.
\end{defn}

\begin{theorem}
The functor of Theorem~\ref{T:crew functor} induces an equivalence between
the category of $p$-adic representations of $\pi_1(X, \overline{x})$
potentially 
unramified on $\overline{X}$,
and the category of unit-root $F$-isocrystals on $X$ overconvergent
along $\overline{X} \setminus X$.
\end{theorem}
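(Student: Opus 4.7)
The plan is to adapt the proof of Theorem~\ref{T:crew functor2} essentially line-by-line, replacing ``overconvergent'' by ``overconvergent along $\overline{X}\setminus X$'' throughout, and tracking the partial compactifications induced on all the intermediate covers. For the direction from isocrystals to representations, let $\calE$ be a unit-root $F$-isocrystal on $X$ overconvergent along $\overline{X}\setminus X$ and let $\rho$ correspond to $\calE$ via Theorem~\ref{T:crew functor}. Choose a $\rho$-stable $\gotho_{K^\phi}$-lattice and form the finite \'etale Galois cover $Y\to X$ trivializing the mod-$2p$ representation; extend this to a cover $\overline{Y}\to\overline{X}$ by normalizing $\overline{X}$ in $Y$. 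Applying \cite[Proposition~7.2.1]{tsuzuki-duke} to each divisorial valuation $v$ centered on $\overline{X}$, whose localization provides a complete discretely valued subring on which the overconvergence of $\calE$ forces the inertia to act through a finite quotient, shows that $\pi_1(Y,\overline{y})$ meets each such inertia subgroup inside $\ker\rho$; hence $\rho$ is potentially unramified on $\overline{X}$.

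For the converse, mirror the second half of the proof of Theorem~\ref{T:crew functor2}. Given $\rho$ potentially unramified on $\overline{X}$, pick the cover $f\colon\overline{Y}\to\overline{X}$ with $\rho|_{\pi_1(Y,\overline{y})}$ unramified on $\overline{Y}$. Apply de Jong's alterations theorem \cite[Theorem~4.1]{dejong} to obtain an alteration $\pi\colon\overline{Z}\to\overline{Y}$ with $\overline{Z}$ smooth and projective and $\pi^{-1}(\overline{Y}\setminus Y)$ a strict normal crossings divisor; set $Z=\pi^{-1}(Y)$. Since $\overline{Y}$ is proper and hence supports every divisorial valuation of $k(\overline{Y})$, every divisorial valuation on $k(\overline{Z})$ restricts to one centered on $\overline{Y}$, so $\rho|_{\pi_1(Z,\overline{z})}$ is unramified on $\overline{Z}$; by Zariski-Nagata purity it factors through $\pi_1(\overline{Z},\overline{z})$. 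Because $\overline{Z}$ is proper the convergent and overconvergent categories coincide, so Theorem~\ref{T:crew functor} produces an $F$-isocrystal $\calF$ on $\overline{Z}$, whose restriction to $Z$ is overconvergent along $\overline{Z}\setminus Z$.

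To return to $X$, restrict to dense open subschemes $U\subseteq X$, $V=f^{-1}(U)$, $V'=\pi^{-1}(V)$ over which $f$ and $\pi$ are both finite \'etale, form the pushforward $\calG=(f\circ\pi)_*(\calF|_{V'})$ as in \cite[\S 5]{tsuzuki-duke}, and observe that $\calG$ is a unit-root $F$-isocrystal on $U$ overconvergent along the trace of $\overline{X}\setminus X$ in a suitable partial compactification of $U$. The adjunction-and-trace $\pi_1$-equivariant maps $V(\rho)\to V(\sigma)\to V(\rho)$ expose the convergent isocrystal attached to $\rho|_{\pi_1(U,\overline{x})}$ as a direct summand of the convergent incarnation of $\calG$, and Proposition~\ref{P:full faith} promotes the corresponding projector to the partially overconvergent category; its image is the sought-for $F$-isocrystal on $U$. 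Finally invoke \cite[Proposition~5.3.7]{kedlaya-part1} (applied divisorial-valuation by divisorial-valuation to the components of $\overline{X}\setminus U$ lying in $X$) to extend the object from $U$ across the interior boundary and obtain the desired $F$-isocrystal on $X$.

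The main obstacle is bookkeeping for the partial compactification under pushforward. One must choose the alteration $\pi$ and the opens $U,V,V'$ so that $\overline{Z}\setminus Z$, $\overline{Y}\setminus Y$, and $\overline{X}\setminus X$ correspond compatibly; in particular, the partial compactification $\overline{U}$ supporting $\calG$ must contain an open neighborhood of $\overline{X}\setminus X$, so that the final extension step yields overconvergence along the \emph{prescribed} boundary rather than a smaller divisor. The \'etaleness of $f$ over $X$ and the normal-crossings control on $\pi^{-1}(\overline{Y}\setminus Y)$ supplied by de Jong are what make this compatibility achievable.
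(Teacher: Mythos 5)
Your overall plan — adapt the proof of Theorem~\ref{T:crew functor2} by tracking partial compactifications — is the right one, and indeed is exactly what the paper intends (its proof consists of the single sentence ``The proof is as in Theorem~\ref{T:crew functor2}''). Your forward direction is fine. However, the backward direction as written contains a genuine error.

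You assert ``Since $\overline{Y}$ is proper and hence supports every divisorial valuation of $k(\overline{Y})$...''. But $\overline{X}$ is \emph{not} assumed proper here (that is precisely the case already covered by Theorem~\ref{T:crew functor2}), and therefore neither is $\overline{Y}$. Worse, the error is already built into your de Jong step: if $\pi\colon\overline{Z}\to\overline{Y}$ is a surjective morphism and $\overline{Z}$ is projective, then $\overline{Y}$, being the image of a projective variety in a separated scheme, is forced to be proper. So one cannot simultaneously have $\overline{Z}$ projective \emph{and} a morphism $\pi$ to a non-proper $\overline{Y}$; the two requirements you impose are incompatible in the general case. Consequently, both ways you use properness — (i) to conclude every divisorial valuation on $k(\overline{Z})$ restricts to one centered on $\overline{Y}$, and (ii) to identify the convergent and overconvergent categories on $\overline{Z}$ — fail.

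The repair is to give up properness of $\overline{Z}$ and instead keep the morphism. Apply de Jong to $\overline{Y}$ to obtain an alteration $\pi\colon\overline{Z}\to\overline{Y}$ with $\overline{Z}$ regular (quasi-projective, not necessarily proper) and $\overline{Z}\setminus Z$ a strict normal crossings divisor, where $Z=\pi^{-1}(Y)$. Then: divisorial valuations on $k(Z)$ centered on $\overline{Z}$ restrict, \emph{via the morphism $\pi$}, to divisorial valuations on $k(Y)$ centered on $\overline{Y}$, so unramifiedness of $\rho|_{\pi_1(Y,\overline{y})}$ on $\overline{Y}$ gives unramifiedness of $\rho|_{\pi_1(Z,\overline{z})}$ on $\overline{Z}$; Zariski--Nagata purity on the regular (non-proper) $\overline{Z}$ then shows $\rho|_{\pi_1(Z,\overline{z})}$ factors through $\pi_1(\overline{Z},\overline{z})$; Theorem~\ref{T:crew functor} produces a \emph{convergent} unit-root $F$-isocrystal on $\overline{Z}$, whose restriction to $Z$ is automatically overconvergent along $\overline{Z}\setminus Z$ (this follows directly from the definition of overconvergence and does not require $\overline{Z}$ proper). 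The rest of your pushforward and projector argument then goes through as you describe.
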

\begin{proof}
The proof is as in Theorem~\ref{T:crew functor2}. Note that 
the case $X = \overline{X}$ is Theorem~\ref{T:crew functor} itself,
while the case where $\overline{X}$
is proper over $k$ is Theorem~\ref{T:crew functor2}.
\end{proof}

\begin{remark}
One can also use the construction of Abbes and Saito 
\cite{abbes-saito1, abbes-saito2} to define Swan
conductors for $p$-adic representations. It has been shown recently
by Xiao \cite{xiao} that this construction agrees with 
the differential Swan conductor. Consequently, the results we obtain
about differential Swan conductors will apply also to Abbes-Saito
conductors. This agreement also occurs in the $\ell$-adic setting, as
discussed in Section~\ref{sec:l-adic}.
(In \cite{xiao2}, Xiao gives an analogue of differential Swan conductors
in mixed characteristic, and obtains an analogous comparison theorem
with Abbes-Saito conductors.)
\end{remark}

\subsection{Normalization of conductors}

When studying variation of differential Swan conductors, it will be useful
to normalize as follows.
\begin{defn} \label{D:normalization}
Let $X$ be a smooth irreducible $k$-variety, 
let $X \hookrightarrow \overline{X}$ be an open immersion of $k$-varieties
with dense image, and let $\calE$ be an
isocrystal on $X$
overconvergent along $\overline{X} \setminus X$.
As noted in Remark~\ref{R:diff Swan}, we can define
the differential ramification breaks and
the differential Swan conductor of $\calE$ with respect to a suitable divisorial 
valuation
$v$ on $k(X)$ over $k$ centered on $\overline{X}$
(which may be arbitrary if $k$ is perfect); we refer to
these as being in their \emph{natural normalization}.
For $t \in k(X)^*$ with $v(t) \neq 0$, 
we define the \emph{normalization with respect to $t$}
of the differential ramification breaks, or the differential Swan conductor, 
with respect to $v$ as the natural
normalization divided by the index of $v(t)\ZZ$ in the value group of $v$.
\end{defn}

For an easy example, we return to the Dwork isocrystals of
\cite[Example~3.5.10]{kedlaya-swan1}, but this time in a global setting.
\begin{defn} \label{D:dwork iso}
Assume that $K$ contains an element $\pi$ with $\pi^{p-1} = -p$
(a \emph{Dwork pi}).
Let $\calL$ be the $\nabla$-module of rank 1
on $\AAA^1_K$ with $\nabla$-action given on a generator $\bv$ by
\[
\nabla(\bv) = \pi \bv \otimes dt.
\]
One shows by a direct calculation that $\calL$ gives an overconvergent
$F$-isocrystal on $\AAA^1_k$, called the \emph{(standard) Dwork isocrystal};
it is in fact the image under the functor of Theorem~\ref{T:crew functor2}
of a nontrivial character of the Artin-Schreier cover
$\Spec k[z,t]/(z^p - z -t) \to \Spec k[t] = \AAA^1_k$.
For $X$ any variety over $k$ and $f \in \Gamma(X,\calO)$, we may identify
$f$ with a regular map $X \to \AAA^1_k$, and define
$\calL_f$ as the pullback $f^* \calL$, as an overconvergent $F$-isocrystal
on $X$.
\end{defn}

\begin{example} \label{ex:Dwork}
Assume $k=k_0$,
let $\calE$ be the Dwork isocrystal $\calL_{xy}$ on $\AAA^2_k$,
and compute conductors using \cite[Example~3.5.10]{kedlaya-swan1}.
For positive integers $a,b$ with $\gcd(a,b) = 1$,
let $x^{-a} \sim y^{-b}$ denote the exceptional divisor
of the blowup of the ideal sheaf on $\PP^1_k \times \PP^1_k$
concentrated at $(\infty, \infty)$ generated by $x^{-a}, y^{-b}$.
We extend this notation to the case $(a,b) = (1,0), (0,1)$, meaning the
divisors on $\PP^1_k \times \PP^1_k$ cut out by $y^{-1}, x^{-1}$,
respectively.

For $r \in \QQ_{\geq 0}$, write $r = b/a$ in lowest
terms and write $x^{-1} \sim y^{-r}$ for $x^{-a} \sim y^{-b}$.
Along $x^{-1} \sim y^{-r}$, the Swan conductor in its natural normalization
is $a+b$, which behaves erratically as $r$ varies.
However, the normalization with respect to $y$ is $1 + r$, which
is an affine function of $r$. This behavior will prove to be typical;
see Theorem~\ref{T:div subhar}.
\end{example}

\section{$\nabla$-modules on polyannuli}
\label{sec:polyannuli}

The easiest setting in which to study the variation of differential
highest breaks and Swan conductors is on polyannuli, or more conveniently on
the generalized polyannuli of \cite[\S 4]{kedlaya-part3}. 
Using some analysis of differential modules on such spaces carried out
in \cite{kedlaya-xiao} (jointly with Liang Xiao), 
we obtain a strong result on the variation of 
differential Swan conductors (Theorem~\ref{T:convex variation}).
In fact, all results in this section should be considered to be joint
work with Liang Xiao, as explained in Remark~\ref{R:joint}.

\subsection{Convex functions}

We need some basic definitions and theorems about convex functions
from \cite[\S 2]{kedlaya-part3} and \cite[\S 3]{kedlaya-xiao}.
For stronger results along these lines, see \cite{kedlaya-tynan}.

\begin{defn}
Let $C$ be a convex subset of $\RR^n$. A function $f: C \to \RR$ is
\emph{convex} if for all $x,y \in C$ and $t \in [0,1]$,
\[
tf(x) + (1-t)f(y) \geq f(tx + (1-t)y);
\]
such a function is continuous on the interior of $C$.
\end{defn}

\begin{defn}
An \emph{affine functional} on $\RR^n$ is a function
$\lambda: \RR^n \to \RR$ of the form
$\lambda(x) = a_1 x_1 + \cdots + a_n x_n + b$ for some
$a_1, \dots, a_n, b \in \RR$. We say $\lambda$ is \emph{transintegral}
if $a_1, \dots, a_n\in \ZZ$ and \emph{integral} if also $b \in \ZZ$.
\end{defn}

\begin{defn}
A subset $C$ of $\RR^n$ is 
\emph{(trans)rational polyhedral}, or \emph{(T)RP},
if there exist (trans)integral affine
functionals $\lambda_1, \dots, \lambda_m$ such that
\[
C = \{x \in \RR^n: \lambda_i(x) \geq 0 \qquad (i=1, \dots, m)\}.
\]
In particular, any TRP set is convex and closed (but not necessarily bounded).
\end{defn}

\begin{defn}
Let $C$ be a (T)RP subset of $\RR^n$. 
A function $f: C \to \RR$ is \emph{polyhedral} 
if there exist
affine functionals $\lambda_1, \dots, \lambda_m$ such that
\[
f(x) = \sup_i \{\lambda_i(x) \} \qquad (x \in C).
\]
Such a function is continuous and convex.
We say that $f$ is \emph{(trans)integral polyhedral}
if the $\lambda_i$ can be taken to be (trans)integral.
\end{defn}

The following result is \cite[Theorem~2.4.2]{kedlaya-part3}.
\begin{theorem} \label{T:integral polyhedral}
Let $C$ be a bounded RP subset of $\RR^n$. Then a continuous
convex function $f: C \to \RR$
is integral polyhedral if and only if 
\begin{equation} \label{eq:integral values}
f(x) \in \ZZ + \ZZ x_1 + \cdots + \ZZ x_n \qquad (x \in C \cap \QQ^n).
\end{equation}
\end{theorem}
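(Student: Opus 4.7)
The forward direction is immediate: if $f = \max_i \lambda_i$ for finitely many integral affine functionals $\lambda_i$, then at any $x \in C \cap \QQ^n$ the maximum is attained by some $\lambda_{i_0}$, and $\lambda_{i_0}(x) = a_{i_0} \cdot x + b_{i_0} \in \ZZ + \sum_j \ZZ x_j$.

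For the converse, my plan is to construct the candidate integral polyhedral expression explicitly and then verify it equals $f$. Since $C$ is bounded and $f$ is continuous on the compact set $C$, the set of $(a,b) \in \ZZ^n \times \ZZ$ with $a \cdot y + b \leq f(y)$ for all $y \in C$ involves only finitely many distinct slope vectors $a$ (bounded in terms of the diameter of $C$ and the supremum of $|f|$), and for each such $a$ the maximal admissible $b$ is $b_a = \lfloor \inf_{y \in C}(f(y) - a \cdot y) \rfloor$. Setting $g(x) = \max_a (a \cdot x + b_a)$ yields an integral polyhedral function with $g \leq f$ on $C$; everything reduces to proving $g = f$.

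To establish $g = f$, I would proceed by induction on $n$. The base case $n = 0$ is trivial, since the integrality hypothesis forces $f$ to be a single integer. For the inductive step I would accomplish two subtasks. \emph{First}, on any full-dimensional open subset of $C$ on which $f$ happens to be affine, both the slope and the intercept must be integers: this is a Farey-type argument, selecting rational points $x_1,\dots,x_{n+1}$ in affine general position whose denominators can be arranged so that the determinant of the linear system recovering the slope and intercept from $f(x_1),\dots,f(x_{n+1})$ equals $\pm 1$; the integrality hypothesis at each $x_i$ then yields integer right-hand sides, forcing integer solutions. \emph{Second}, I would show that $f$ is piecewise affine with only finitely many pieces, equivalently, that $f$ has no strictly convex region. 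For the latter, at any interior point $x_0$ of a hypothetical strictly convex region I would restrict $f$ to a suitable rational affine line through $x_0$, producing a one-variable continuous convex function with a correspondingly rescaled integrality hypothesis; a direct computation comparing $f(x_0 + k/N)$ for large $N$ against its affine interpolant shows that the convexity gap at scale $1/N$ is bounded below by the quantization scale $1/N$ enforced by integrality, while strict convexity forces the gap to vanish faster as $N \to \infty$, yielding a contradiction.

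The principal obstacle I anticipate is the restriction step: when one restricts $f$ to a rational affine subspace or to a face of the polyhedral subdivision, the integrality hypothesis is not preserved verbatim but becomes an integrality condition with respect to a rescaled lattice determined by the denominators of the parameterization. Careful bookkeeping of these denominators, together with the reconversion of integer affine functionals on a face back to integer affine functionals on $C$ (using convexity of $f$ to preserve the inequality $\lambda \leq f$ off the face), is the technical heart of the inductive step. Once strict convexity is ruled out, monotonicity of the directional slopes of a convex function limits the total number of affine pieces to a finite count, each piece has integer coefficients by the Farey argument, and $g = f$ by construction.
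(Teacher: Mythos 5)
The paper does not prove this statement; it is quoted verbatim from \cite[Theorem~2.4.2]{kedlaya-part3}, so there is no internal proof to compare your attempt against. Evaluating your proposal on its own merits: the overall shape (trivial forward direction; for the converse, build the integral polyhedral minorant $g$, show each affine piece of $f$ has integer coefficients via a Farey-type unimodular determinant, rule out strict convexity via second differences against the integrality gap) is a reasonable attack and the $1$-dimensional core of it is sound.

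However, there are real gaps. \emph{First}, your candidate $g$ is not obviously polyhedral as defined: for \emph{every} $a \in \ZZ^n$ the constraint $a \cdot y + b \leq f(y)$ on $C$ is satisfiable by taking $b$ small, so the set of admissible slope vectors is all of $\ZZ^n$, not a finite set. The finiteness you invoke amounts to asserting that $f$ is Lipschitz with an integer bound on its gradient — but a continuous convex function on a compact convex set need not be Lipschitz (the Lipschitz constant can blow up at the boundary of $C$), and ruling this out is part of what the theorem must establish, not an input to it. \emph{Second}, in dimension $n>1$, ``no strictly convex region'' is strictly weaker than ``piecewise affine with finitely many pieces.'' Your second-difference argument along a rational line shows the $1$-dimensional restriction is not strictly convex and in fact has slopes in a rescaled lattice $\tfrac{1}{c}\ZZ$ (with $c$ depending on the line); but passing from this to a finite global affine decomposition of $f$ on $C$ needs an argument that the subdifferential $\partial f(x)$ meets $\ZZ^n$ (or that $\nabla f \in \ZZ^n$ a.e.) and that it ranges over a bounded, hence finite, set — the step from one-variable control to $n$-variable polyhedrality is not automatic. \emph{Third}, the Farey step as stated needs a lemma: given an arbitrary open $U \subset \RR^n$, one must produce $n+1$ points of $U \cap \QQ^n$ whose homogeneous coordinate matrix is unimodular (e.g.\ take a primitive $v_1 \in \ZZ^{n+1}$ over a rational point of $U$, complete to a basis $v_1, w_2, \dots, w_{n+1}$ of $\ZZ^{n+1}$, and replace $w_k$ by $Nv_1 + w_k$ for $N$ large); you correctly flag denominator bookkeeping as the ``technical heart'' but leave this lemma unstated, and without it the integrality of the slope and intercept on a small affine piece is not established.
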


The following result is \cite[Theorem~3.2.4]{kedlaya-xiao}.
\begin{theorem} \label{T:trp}
Let $C$ be a TRP subset of $\RR^n$. Then a function $f: C \to \RR$
is transintegral polyhedral if and only if its restriction to
the intersection of $C$ with every 
$1$-dimensional TRP subset of $\RR^n$ is transintegral polyhedral.
\end{theorem}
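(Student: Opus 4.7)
I would handle the two directions separately. For the forward direction, observe that any $1$-dimensional TRP subset $L \subseteq \RR^n$ is cut out by $n-1$ transintegral linear equations, hence lies along a direction vector in $\QQ^n$; consequently the restriction to $L$ of any transintegral affine functional $a \cdot x + b$ (with $a \in \ZZ^n$, $b \in \RR$) is an integer-sloped affine function of a parameter on $L$, and a sup of finitely many such functionals restricts to one.

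For the reverse direction, I would induct on $n$, the case $n = 1$ being vacuous because $C$ is itself a $1$-dimensional TRP set. For $n \geq 2$, I would first promote the $1$-dimensional hypothesis to convexity and continuity of $f$ on all of $C$: convexity along rational segments is immediate from the hypothesis, while convexity and continuity along irrational segments follow by approximation, using density of rational directions together with a local bound on one-sided directional derivatives obtained from the $1$-dimensional polyhedral structure. Next I would apply the inductive hypothesis to slices of $C$ by transintegral hyperplanes. Any transintegral hyperplane $H \subseteq \RR^n$ admits an affine isomorphism with $\RR^{n-1}$ preserving the transintegral structure (extend a primitive integer normal to a basis of $\ZZ^n$), and the $1$-dimensional TRP subsets of $C \cap H$ viewed inside $H$ are precisely the $1$-dimensional TRP subsets of $C$ lying in $H$; so by induction $f|_{C \cap H}$ is transintegral polyhedral on $C \cap H$.

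The globalization step is what I expect to be the main obstacle, and I would carry it out via the epigraph $E = \{(x,y) \in C \times \RR : y \geq f(x)\} \subseteq \RR^{n+1}$, which by the previous step is closed and convex. To conclude that $f$ is transintegral polyhedral on $C$, it suffices to show $E$ is cut out by its transintegral supporting half-spaces and that on each bounded RP subset of $C$ only finitely many of them are needed. Generic supporting hyperplanes at boundary points of $E$ exist by Hahn--Banach but carry no rationality information; however, the slicewise transintegral polyhedral structure forces at least one supporting hyperplane at each boundary point $(x_0, f(x_0))$ to be transintegral, because a supporting hyperplane with irrational normal would, after intersecting with a carefully chosen transintegral hyperplane through $x_0$ along which $f$ is already known to be transintegral polyhedral, contradict that description on the slice. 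Local finiteness of the required supporting half-spaces should follow from the TRP structure of $C$ together with the polyhedral control on slices, perhaps after invoking Theorem~\ref{T:integral polyhedral} on bounded RP subpieces of slices to limit the combinatorics.
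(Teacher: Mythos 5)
The paper does not prove Theorem~\ref{T:trp}; it is quoted verbatim as \cite[Theorem~3.2.4]{kedlaya-xiao}, so there is no in-paper argument for you to match. Judged on its own terms, your forward direction is fine, and the general shape of the reverse direction (deduce convexity and continuity, slice by transintegral hyperplanes, globalize via the epigraph) is a reasonable plan, but the globalization step contains two real gaps.

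First, the argument that some supporting hyperplane at each boundary point of the epigraph $E$ must be transintegral does not go through as you state it. The epigraph of a transintegral polyhedral function on a slice has, at a kink, an entire cone of supporting hyperplanes, most of them not transintegral; so an irrational-normal supporting hyperplane of $E$ restricts to a perfectly legitimate supporting hyperplane of the slice epigraph, and there is no contradiction. The clean route is direct rather than by contradiction: at a point where $f$ is differentiable, the one-dimensional hypothesis forces $\nabla f(x)\cdot v\in\ZZ$ for every primitive $v\in\ZZ^n$, hence $\nabla f(x)\in\ZZ^n$; at nondifferentiable points one passes to limits. Second, and more seriously, what you label ``local finiteness'' is really a \emph{global} finiteness claim, and it is not addressed. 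Since $C$ is in general unbounded, you must show that the integer gradients of $f$ assume only finitely many values on all of $C$; once you have that, the region on which each gradient occurs is convex, $f$ is affine on it, and $f$ is a finite max by convexity. But establishing the finiteness requires using the hypothesis on \emph{unbounded} one-dimensional TRP sets (so that along every rational ray the slope sequence terminates, controlling the recession behavior of $f$); invoking Theorem~\ref{T:integral polyhedral} on bounded RP subpieces, as you propose, cannot see this. A convex function with integer gradients that is slicewise finite on every bounded segment need not be a finite max (already in dimension one), so this is genuinely the crux, and your proposal leaves it unresolved.
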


\subsection{Generalized polyannuli}

We set notation as in \cite[\S 4]{kedlaya-part3}.

\begin{notation}
For $* = (*_1, \dots, *_n)$ and
$J = (J_1, \dots, J_n)$, we interpret $*^J$ to mean $*_1^{J_1} \cdots *_n^{J_n}$.
\end{notation}

\begin{defn}
A subset $S$ of $(0, +\infty)^n$ is \emph{log-(T)RP} if
$\log(S) \subseteq \RR^n$ is a (trans)rational polyhedral set. 
We say $S$ is \emph{ind-log-(T)RP} if it
is a union of an increasing sequence of log-(T)RP sets; for instance, any
open subset of $(0, +\infty)^n$ is covered by ind-log-RP subsets.
\end{defn}

\begin{defn}
For $S$ an ind-log-TRP set, let $A_K(S)$ be the subspace of the rigid analytic
$n$-space with coordinates $t_1, \dots, t_n$ defined by the condition
\[
(|t_1|, \dots, |t_n|) \in S.
\]
The elements of $\Gamma(A_K(S), \calO)$ can be represented by formal
series $\sum_{J \in \ZZ^n} c_J t^J$; for $R = (R_1, \dots, R_n) \in S$,
write $|\cdot|_R$ for the $R$-Gauss norm
\[
\left| \sum c_J t^J \right|_R = \sup_J \{|c_J| R^J\}.
\]
\end{defn}

\begin{lemma} \label{L:hadamard}
Let $S \subseteq (0, +\infty)^n$ be an ind-log-RP subset.
For $A,B \in S$ and $c \in [0,1]$, put $R = A^c B^{1-c}$; that is,
$r_i = a_i^c b_i^{1-c}$ for $i=1, \dots, n$.
Then for any $f \in \Gamma(A_K(S), \calO)$,
\[
|f|_R \leq |f|_A^c |f|_B^{1-c}.
\]
\end{lemma}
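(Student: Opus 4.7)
The plan is to prove the inequality termwise by writing $f$ as a Laurent series and exploiting the multiplicative decomposition $R^J = A^{cJ}B^{(1-c)J}$, which makes the bound essentially a statement about weighted geometric means of two nonnegative quantities. Concretely, I first represent $f \in \Gamma(A_K(S),\calO)$ by its formal expansion $f = \sum_{J \in \ZZ^n} c_J t^J$ with $c_J \in K$, so that each Gauss norm appearing is defined by $|f|_\rho = \sup_J |c_J|\rho^J$ for $\rho \in \{A,B,R\}$; convergence/admissibility is not an issue here because $A, B, R$ all lie in $S$ (the set $S$ is convex in the logarithmic sense, so $R = A^c B^{1-c} \in S$ as well).

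Next I would observe the identity
\[
R^J = \prod_{i=1}^n (a_i^c b_i^{1-c})^{j_i} = A^{cJ} B^{(1-c)J},
\]
so that for each multi-index $J$,
\[
|c_J| R^J = \bigl(|c_J| A^J\bigr)^{c} \bigl(|c_J| B^J\bigr)^{1-c}
\leq |f|_A^c \, |f|_B^{1-c},
\]
using that $|c_J|A^J \leq |f|_A$ and $|c_J|B^J \leq |f|_B$ by the definition of the Gauss norm, together with the elementary fact that $x \mapsto x^c$ and $y \mapsto y^{1-c}$ are nondecreasing on $[0,\infty)$. Taking the supremum over $J \in \ZZ^n$ of the left-hand side then yields the desired bound $|f|_R \leq |f|_A^c |f|_B^{1-c}$.

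There is essentially no obstacle: the only thing to verify carefully is that a global section of $\calO$ on $A_K(S)$ is represented by a single Laurent series $\sum c_J t^J$ for which all three Gauss norms $|\cdot|_A, |\cdot|_B, |\cdot|_R$ coincide with the indicated suprema. This is standard for log-RP subsets (and by a limit argument for ind-log-RP subsets), since $A_K(S)$ can be exhausted by affinoid polyannuli on which every function is uniquely expressible as such a series with the stated Gauss-norm formulas. Once this representation is in hand, the argument above is purely formal and does not require any differential or rigid-analytic input beyond the definition of the Gauss norm.
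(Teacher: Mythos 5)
Your proof is correct and is essentially the standard argument; the paper itself gives no proof here but instead cites \cite[Lemma~3.1.6(b)]{kedlaya-part1} and \cite[Lemma~4.1.7]{kedlaya-part3}, where the same termwise estimate on Laurent coefficients (write $|c_J|R^J = (|c_J|A^J)^c(|c_J|B^J)^{1-c}$ and take suprema) is what is used. The only point you gloss over slightly is the reduction from ind-log-RP to log-RP, but your remark that $A_K(S)$ is exhausted by affinoid polyannuli on which the Laurent expansion and Gauss-norm formulas hold handles this adequately.
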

\begin{proof}
See \cite[Lemma~3.1.6(b)]{kedlaya-part1} or
\cite[Lemma~4.1.7]{kedlaya-part3}.
\end{proof}
The following corollary is loosely analogous to Lemma~\ref{L:intersection2}.
\begin{cor} \label{C:log-convex hull}
Let $S_1, S_2$ be log-RP subsets of $(0, +\infty)^n$ 
with nonempty intersection, and let 
\[
S  = \{A^c B^{1-c}: A \in S_1, B \in S_2, c \in [0,1]\}
\]
be the log-convex hull of $S_1, S_2$. Then inside
$\Gamma(A_K(S_1\cap S_2), \calO)$, we have
\[
\Gamma(A_K(S_1), \calO) \cap \Gamma(A_K(S_2), \calO)
= \Gamma(A_K(S), \calO).
\]
\end{cor}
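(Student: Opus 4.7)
The plan is to prove the two inclusions separately, with the nontrivial one coming from a coefficient-by-coefficient application of Lemma~\ref{L:hadamard}.

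First I would dispose of the easy inclusion $\Gamma(A_K(S), \calO) \subseteq \Gamma(A_K(S_1), \calO) \cap \Gamma(A_K(S_2), \calO)$. The nonemptiness hypothesis $S_1 \cap S_2 \neq \emptyset$ ensures $S_2$ (and $S_1$) is nonempty, so for any $A \in S_1$ and any fixed $B \in S_2$, taking $c=1$ yields $A = A^1 B^0 \in S$. Hence $S_1 \subseteq S$ and, symmetrically, $S_2 \subseteq S$; this gives $A_K(S_i) \subseteq A_K(S)$ for $i=1,2$ and the inclusion follows by restriction of analytic functions, with restriction factoring through $\Gamma(A_K(S_1 \cap S_2), \calO)$ as required.

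For the reverse inclusion, take $f \in \Gamma(A_K(S_1), \calO) \cap \Gamma(A_K(S_2), \calO)$, and expand it as a formal Laurent series $f = \sum_{J \in \ZZ^n} c_J t^J$ valid on $A_K(S_1 \cap S_2)$; the expansion is compatible with the individual expansions coming from the membership in $\Gamma(A_K(S_i), \calO)$ by uniqueness of Laurent coefficients. The task is to show that this series defines a section of $\calO$ on $A_K(S)$, i.e., that for every $R \in S$, the $R$-Gauss norm of each tail tends to $0$. Given such an $R$, write $R = A^c B^{1-c}$ with $A \in S_1$, $B \in S_2$, and $c \in [0,1]$; then termwise
\[
|c_J| R^J = (|c_J| A^J)^c \, (|c_J| B^J)^{1-c}.
\]
Since $f \in \Gamma(A_K(S_i), \calO)$, the quantities $|c_J|A^J$ and $|c_J| B^J$ both tend to $0$ as $|J| \to \infty$, so their weighted geometric mean does as well. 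This simultaneously handles finiteness of $|f|_R$ (which is also directly $|f|_R \leq |f|_A^c |f|_B^{1-c}$ by Lemma~\ref{L:hadamard}) and the tail vanishing of Laurent coefficients required to lie in $\Gamma(A_K(S), \calO)$.

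The main obstacle I anticipate is purely bookkeeping: making precise the appropriate convergence condition for sections on a log-TRP (as opposed to log-RP) base, and verifying that the formal Laurent expansion valid on $A_K(S_1 \cap S_2)$ actually represents the global sections on $A_K(S_1)$ and $A_K(S_2)$ in a single coherent way. Once one reduces (by exhausting $S$ by log-RP pieces and using the ind-log-TRP structure) to checking the pointwise estimate $|c_J| R^J \to 0$ at arbitrary $R \in S$, the proof is immediate from the Hadamard three-circles inequality built into Lemma~\ref{L:hadamard}.
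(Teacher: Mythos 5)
Your proof is correct, and it fills in what the paper treats as immediate: the paper gives no proof, presenting this as a corollary of Lemma~\ref{L:hadamard}, and your coefficient-by-coefficient application of the Hadamard estimate to the (unique, hence shared) Laurent expansion is exactly the intended argument. The easy inclusion via $S_1, S_2 \subseteq S$ and the hard inclusion via $|c_J|R^J = (|c_J|A^J)^c(|c_J|B^J)^{1-c} \to 0$ are both handled correctly, and your worry about compatibility of the expansions is resolved by the uniqueness of Laurent coefficients on $A_K(S_1 \cap S_2)$, as you note.
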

\begin{defn} \label{D:gen rad}
Let $S$ be a log-TRP set, and let $\calE$ be a $\nabla$-module on
$A_K(S)$ relative to $K_0$. For $R \in S$, let $F_R$ be the completion of $\Frac
\Gamma(A_K(S), \calO)$ under $|\cdot|_R$, viewed as a differential field
of order $m+n$ with respect to 
\[
\del_1, \dots, \del_{m+n} = \frac{\del}{\del u_1}, \dots, \frac{\del}{\del u_m},
\frac{\del}{\del t_1}, \dots, \frac{\del}{\del t_n}.
\]
Put
\[
\calE_R = \Gamma(A_K(S), \calE) \otimes_{\Gamma(A_K(S),\calO)} F_R,
\]
viewed as a differential module over $F_R$. Let $S(\calE,R)$ be 
the multiset of reciprocals of the scale multiset of $\calE_R$.
Let $T(\calE,R)$ be the least element of $S(\calE,R)$, i.e., the
reciprocal of the scale of $\calE_R$.
These constructions are stable under shrinking $S$, so they make sense
even if $S$ is only ind-log-TRP.
\end{defn}

The main result we need about differential modules on generalized polyannuli
is \cite[Theorem~3.3.8]{kedlaya-xiao}.
\begin{theorem} \label{T:polyannuli convex}
Let $S$ be an ind-log-TRP subset of $(0, +\infty)^n$, 
and let $\calE$ be a $\nabla$-module of rank $d$ on $A_K(S)$
relative to $K_0$.
For $r \in -\log S$, write $S(\calE,e^{-r}) = \{e^{-f_1(\calE,r)},
\dots, e^{-f_d(\calE,r)}\}$
with $f_1(\calE,r) \geq \cdots \geq f_d(\calE,r)$,
 and put $F_i(\calE,r) = f_1(\calE,r) + \cdots
+ f_i(\calE,r)$. Then the following hold for $i=1,\dots,d$.
\begin{enumerate}
\item[(a)] (Continuity)
The functions $f_i(\calE,r)$ and $F_i(\calE,r)$ are continuous.
\item[(b)] (Convexity)
The function $F_i(\calE,r)$ is convex.
\item[(c)] (Polyhedrality)
The functions $d! F_i(\calE,r)$ and $F_d(\calE,r)$ are transintegral
polyhedral on any TRP subset of $-\log S$.
\end{enumerate}
\end{theorem}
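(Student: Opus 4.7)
The plan is to reduce the multivariable statement to the one-dimensional theory of scales for $\nabla$-modules on an open annulus, developed in \cite{kedlaya-swan1}, using Lemma~\ref{L:hadamard} (the Hadamard three-circles inequality) as the engine for convexity and Theorem~\ref{T:trp} as the combinatorial bridge for polyhedrality. I would address the claims in the order (b), (a), (c), since convexity of $F_i$ immediately yields continuity on the interior of $-\log S$, and the polyhedrality argument then reduces cleanly to the one-dimensional case.

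For convexity of each $F_i$, the point is that $e^{-F_i(\calE,r)}$ is a spectral quantity attached to the $i$-th exterior power $\wedge^i \calE$: after passing to a cyclic vector over $\Frac \Gamma(A_K(S),\calO)$, one expresses it in terms of the first $i$ slopes (ordered to match $f_1 \geq \cdots \geq f_d$) of the Newton polygon of a twisted polynomial whose coefficients lie in $\Gamma(A_K(S),\calO)$. Lemma~\ref{L:hadamard} forces each such coefficient to have log-convex Gauss norm as a function of $r$, and a partial-sum extremum over a Newton polygon with log-convex vertex data is again log-convex; this yields convexity of $F_i$. Continuity of $F_i$ on the interior of $-\log S$ is then automatic from convexity, and continuity of $f_i = F_i - F_{i-1}$ follows from the corresponding one-variable continuity assertion of \cite{kedlaya-swan1}, applied along slices; boundary continuity is handled by the stability-under-shrinking clause in Definition~\ref{D:gen rad}, which lets one approach boundary points from log-TRP neighborhoods.

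For polyhedrality, Theorem~\ref{T:trp} reduces the problem to verifying transintegral polyhedrality of $d! F_i(\calE,r)$ and $F_d(\calE,r)$ on arbitrary $1$-dimensional TRP subsets of $-\log S$. On such a slice, $\calE$ restricts to a $\nabla$-module on a subannulus, and the cyclic-vector-plus-Newton-polygon argument shows directly that $F_d$ is piecewise affine with integer slopes and takes integer values at $\QQ$-points, hence is integral polyhedral by Theorem~\ref{T:integral polyhedral}. For general $i<d$, the individual $f_j$ may permute across Newton-polygon breakpoints under Galois orbits of size dividing $d!$, so $F_i$ is only rational-affine with denominators dividing $d!$; passing to $d! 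F_i$ restores $\ZZ$-valuedness on $\QQ^n$-points, and Theorem~\ref{T:integral polyhedral} again applies. The main obstacle is precisely this last bookkeeping step: verifying that the uniform denominator $d!$ is adequate to absorb every slope-crossing phenomenon. This requires combining the refined break decomposition over a finite cover of the annulus from \cite[Theorem~2.7.2]{kedlaya-swan1} with the elementary fact that slopes of the Newton polygon of a twisted polynomial of degree $d$ are rationals whose denominators divide $d!$, so that the partial-sum functions inherit the correct integrality on $\QQ$-points of each $1$-dimensional TRP slice.
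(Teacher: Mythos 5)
The paper does not prove Theorem~\ref{T:polyannuli convex}; it simply cites it as \cite[Theorem~3.3.8]{kedlaya-xiao}, so there is no in-paper argument for your sketch to match. Judged on its own merits, the proposal has the right \emph{flavor} (Hadamard three-circles, Newton polygons of twisted polynomials, reduction to one-dimensional TRP slices via Theorem~\ref{T:trp}, and Theorem~\ref{T:integral polyhedral} to upgrade to integral polyhedrality), but it elides exactly the difficulties that make \cite{kedlaya-xiao} a substantial paper rather than a one-page argument.

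The central gap is the masking phenomenon. The Newton polygon of a twisted polynomial attached to a cyclic vector does \emph{not} faithfully report the scale multiset: by \cite[Proposition~1.1.9]{kedlaya-swan1}, once some scale $\lambda$ is present, any scale $\lambda' > \lambda^{1/p}$ is invisible in the Newton polygon. The paper itself flags this explicitly in the remark following Proposition~\ref{P:div subhar1}, where it explains that ``reading off... from a twisted polynomial'' fails for precisely two reasons: the masking limitation, and the fact that different derivations $\del_1,\dots,\del_{m+n}$ can be eventually dominant on different summands. Your sketch treats a single cyclic vector for a single derivation as if it computed $S(\calE,R)$ outright; in fact the scale multiset of Definition~\ref{D:gen rad} is taken over a differential field of order $m+n$, and a cyclic vector with respect to one derivation only detects that derivation's contribution. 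Fixing this requires the Frobenius-antecedent machinery (to push scales into the visible range where Newton-polygon slopes become meaningful) and the rotation tricks that let one compare derivations, which is the actual technical content of \cite{kedlaya-xiao}. A secondary but real issue is that a cyclic vector exists only over $\Frac\Gamma(A_K(S),\calO)$, so the coefficients $a_j$ have poles; your claim that they ``lie in $\Gamma(A_K(S),\calO)$'' is false in general, and without it the Hadamard/log-convexity argument for $F_i$ only holds off the pole locus of the cyclic basis. Finally, the exterior-power identity $F_i(\calE,r) = f_1(\wedge^i\calE,r)$ that your convexity argument leans on is itself not automatic and requires justification (it fails in general without a visibility hypothesis on the scales). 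None of these are cosmetic; each is a place where the sketch, taken literally, asserts something false or unproved.
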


\subsection{Solvable modules on polyannuli}

\begin{hypothesis} \label{H:solvable}
Throughout this subsection,
let $S$ be an ind-log-RP set of the form
$\{R^c: R \in T, c \in (0,1]\}$ for $T$ a log-RP set.
Let $\calE$ be a $\nabla$-module of rank $d$ on $A_K(S)$ relative to $K_0$. 
\end{hypothesis}

\begin{defn}
We say that $\calE$ is \emph{solvable at $1$} if for each $R \in T$,
we have $T(\calE, R^c) \to 1$ as $c \to 0^+$. 
In case $-\log T$ is bounded, it is the log-convex hull of its vertices,
which we write as $-\log R_1, \dots, -\log R_l$
for suitable $R_1,\dots,R_l \in T$.
Then by the convexity
in Theorem~\ref{T:polyannuli convex} (or an argument 
using Lemma~\ref{L:hadamard}, as in 
\cite[Proposition~4.2.6]{kedlaya-part3}), to check solvability,
it suffices to do so for $R = R_1,\dots,R_l$.
\end{defn}

\begin{theorem} \label{T:convex variation1}
Suppose that $\calE$ is solvable at $1$. Then
there exist a constant $\epsilon \in (0,1]$ and functions
$b_1(\calE, r),\dots,b_d(\calE,r)$ on $-\log T$ such that
\begin{equation} \label{eq:stabilize}
S(\calE, e^{-cr}) = \{e^{-cb_1(\calE,r)},\dots, e^{-cb_d(\calE,r)}\}
\qquad (c \in (0, \epsilon]; r \in -\log T).
\end{equation}
Moreover, the functions $d! (b_1(\calE,r) + \cdots + b_i(\calE,r))$ 
and $b_1(\calE,r) + \cdots + 
b_d(\calE,r)$ are convex and integral polyhedral.
\end{theorem}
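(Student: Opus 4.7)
The idea is to extract $b_1(\calE,r) + \cdots + b_i(\calE,r)$ as the initial slope of the convex function $c \mapsto F_i(\calE, cr)$ at the apex $c = 0$ of the cone $-\log S = \{cr : r \in -\log T,\, c \in (0,1]\}$, then use Theorem~\ref{T:polyannuli convex} to promote ``initial slope in the limit'' into ``actual slope on a uniform interval.''

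First I would invoke Theorem~\ref{T:polyannuli convex}: each $F_i$ is continuous and convex on $-\log S$, and $d!F_i$ (as well as $F_d$) is transintegral polyhedral on every TRP subset. Solvability at $1$ yields $f_1(\calE, cr) \to 0$, hence $F_i(\calE, cr) \to 0$, as $c \to 0^+$ for each $r \in -\log T$, so I extend $F_i$ continuously to the apex by setting $F_i(0) = 0$. Convexity then forces $c \mapsto F_i(\calE, cr)/c$ to be nondecreasing on $(0, 1]$, and I put
\[
B_i(r) := \lim_{c \to 0^+} F_i(\calE, cr)/c, \qquad b_i(\calE,r) := B_i(r) - B_{i-1}(r) \qquad (B_0 := 0).
\]
The inequalities $b_1(\calE,r) \geq \cdots \geq b_d(\calE,r)$ then follow from $f_1 \geq \cdots \geq f_d \geq 0$ by passing to the limit.

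To obtain integer polyhedrality of $d!B_i$ (and of $B_d$) together with a uniform $\epsilon$, I would work compactly. Exhausting $-\log T$ by bounded log-RP subsets $C$, the truncated cone $K_\delta := \{cr : r \in C,\, c \in [\delta, 1]\}$ is a compact TRP subset of $-\log S$, on which Theorem~\ref{T:polyannuli convex}(c) gives a representation $d!F_i = \sup_j \lambda_j$ with $\lambda_j(x) = \langle a_j, x\rangle + b_j$, $a_j \in \ZZ^n$, $b_j \in \RR$. Letting $\delta \to 0$ and using the continuous extension $d!F_i(0) = 0$, this representation passes to a single one valid on the full cone over $C$ with $\max_j b_j = 0$. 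The pieces $J_0 := \{j : b_j = 0\}$ then realize $d!B_i(r) = \max_{j \in J_0}\langle a_j, r\rangle$, which is manifestly integer polyhedral in $r$. The remaining pieces have $b_j < 0$ with constants bounded away from $0$, and by compactness of $C$ they fail to dominate the $J_0$ pieces at $cr$ once $c \leq \epsilon_C$ for some uniform $\epsilon_C > 0$, yielding $d!F_i(\calE, cr) = c \cdot d!B_i(r)$ on $C \times (0, \epsilon_C]$. Applying the same analysis to $F_d$ (for which Theorem~\ref{T:polyannuli convex}(c) grants polyhedrality without the factor $d!$) yields integer polyhedrality of $B_d$ itself. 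Patching across a compact exhaustion of $-\log T$ and taking differences $b_i = B_i - B_{i-1}$ then gives \eqref{eq:stabilize}.

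The main obstacle is the passage from polyhedral representations on each truncated cone $K_\delta$ to a single representation valid on a neighborhood of the apex, since Theorem~\ref{T:polyannuli convex}(c) is only local on compact TRP subsets and in principle the polyhedral pieces could proliferate as $\delta \to 0$. Ruling this out requires exploiting both the continuous convex extension of $d!F_i$ to $0$ and the fact that its integer slopes $a_j$ have magnitude controlled (e.g.\ by the rank $d$ of $\calE$), so that the set of linear parts appearing is \emph{a priori} finite; convexity then forces a single representation near the apex.
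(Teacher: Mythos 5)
Your framework parallels the paper's: both extend $F_i$ to the apex using solvability, both extract $B_i(r)$ as the one-sided slope at $0$, both aim to promote the pointwise limit to a single affine piece on a uniform interval, and both finish with Theorem~\ref{T:integral polyhedral}. The divergence is in the crucial middle step, precisely the one you flag as ``the main obstacle.'' The paper handles it by reducing to dimension one: it checks that on each transrational line segment through the apex, $d!F_i$ and $F_d$ are affine near $0$, citing the one-dimensional stabilization result \cite[Theorem~2.7.2]{kedlaya-swan1} for solvable $\nabla$-modules on a single annulus, and then invokes the structure theorem Theorem~\ref{T:trp} to pass from polyhedrality on $1$-dimensional slices to polyhedrality on all of $U = \{cr : r \in -\log T,\, c \in [0,1]\}$. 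That two-step reduction (1D input from the previous paper, plus the TRP slicing theorem) is exactly what replaces your proposed direct argument.

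The gap in your proposed fix is real. You want to conclude that the integer slope vectors $a_j$ appearing in the polyhedral representations on the truncated cones $K_\delta$ lie in a finite set by asserting their ``magnitude is controlled (e.g.\ by the rank $d$ of $\calE$).'' But no such bound on the $\|a_j\|$ is supplied by Theorem~\ref{T:polyannuli convex} or by the convexity of $F_i$. Convexity together with $F_i(0)=0$ gives that the \emph{restricted} slopes $\langle a_j, r\rangle$ along a fixed ray are monotone and bounded between $B_i(r)$ and the slope at $c=1$; this does not bound the vectors $a_j$ themselves, since $\langle a_j,r\rangle$ can remain small while $\|a_j\|\to\infty$. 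A cleaner version of your idea would fix a transrational ray (so $r \in \QQ^n$, which is all Theorem~\ref{T:trp} requires): then the restricted slopes are rationals with bounded denominator and bounded absolute value, hence lie in a finite set, and monotonicity in $c$ forces the slope to stabilize near $0$, giving the needed $1$D affineness without invoking \cite[Theorem~2.7.2]{kedlaya-swan1}. From there you would still need to invoke Theorem~\ref{T:trp}, just as the paper does, to pass from $1$D slices to genuine polyhedrality on the cone; as written, your argument does not do this and the slope-bound claim is left unsubstantiated.
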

\begin{proof}
Extend $d! F_i(\calE,r)$ and $F_d(\calE,r)$ to
$U = \{cr: r \in -\log T, c \in [0,1]\}$ by forcing them to take
the value $0$ at $r = 0$.
By Theorem~\ref{T:polyannuli convex}, the functions are convex
and transintegral polyhedral on any 1-dimensional
TRP subset of $U$ not containing 0.
We claim that the same is true for a 1-dimensional
TRP subset of $U$ passing through 0;
the missing assertion is that the functions are affine in a neighborhood of
0 on any line with rational slopes. This holds by virtue of
\cite[Theorem~2.7.2]{kedlaya-swan1}.

We may thus apply Theorem~\ref{T:trp} to deduce that
$d! F_i(\calE,r)$ and $F_d(\calE,r)$ are transintegral polyhedral on
$U$. This gives the existence of $\epsilon$ and the $b_i$,
as well as the convexity and polyhedrality of
$d! (b_1(\calE,r) + \cdots + b_i(\calE,r))$ and $b_1(\calE,r) + \cdots + 
b_d(\calE,r)$. We may deduce the integral polyhedrality by then
applying Theorem~\ref{T:integral polyhedral}.
\end{proof}

\begin{remark} \label{R:joint}
In the original version of this
paper, the results of this section
were only proved assuming that $\calE$ admits a Frobenius
structure. This was needed to ensure the existence of $\epsilon$
such that \eqref{eq:stabilize} holds, as we were unable to prove this
otherwise.
It is the more careful analysis of differential modules on $p$-adic
polyannuli in the joint paper \cite{kedlaya-xiao} with Liang Xiao that makes
the stronger result possible; consequently, we consider all results
in this section to be joint work with Xiao.
\end{remark}

\begin{remark}
One can also obtain a decomposition theorem in case one of the functions
$b_1(\calE,r) + \cdots + b_i(\calE,r)$ is affine, by using
\cite[Theorem~3.4.2]{kedlaya-xiao}. However, the conclusion
will only hold on the interior of $S$.
\end{remark}

\subsection{Geometric interpretation}

We now interpret the previous calculation in terms of Swan conductors.

\begin{hypothesis} \label{H:variation}
Let $\overline{X}$ be a smooth irreducible $k$-variety.
Let $D_1,\dots,D_n$ be smooth irreducible divisors on $\overline{X}$
meeting transversely at a closed
point $x$. Choose local coordinates $t_1,\dots,t_n$
at $x$ such that $t_i$ vanishes along $D_i$.
Put $D = D_1 \cup \cdots \cup D_n$
and $X = \overline{X} \setminus D$. 
Let $\calE$ be an isocrystal of rank $d$ on $X$ overconvergent along $D$.
\end{hypothesis}

We next state an analogue of Theorem~\ref{T:polyannuli convex},
with a similar proof.
\begin{hypothesis} \label{H:variation2}
Assume Hypothesis~\ref{H:variation},
but suppose further that $\overline{X}$ is affine and that the common zero
locus of $t_1,\dots,t_n$ on $\overline{X}$ consists solely of $x$.
Let $P$ be a smooth affine irreducible formal scheme over
$\Spf \gotho_K$ with $P_k \cong \overline{X}$, and 
choose $\tilde{t}_1, \dots, \tilde{t}_n \in \Gamma(P, \calO)$ lifting
$t_1,\dots,t_n$. Realize $\calE$ as a $\nabla$-module
relative to $K_0$
on the space 
\[
\{y \in P_K: \epsilon \leq |\tilde{t}_i(y)| \leq 1 \qquad (i=1,\dots,n)\}.
\]
For $R \in [\epsilon,1]^n$, 
let $|\cdot|_R$ be the supremum norm on the space
\[
\{y \in P_K: |\tilde{t}_i(y)| = R_i \qquad (i=1,\dots,n)\},
\]
then define $S(\calE,R)$ as in Definition~\ref{D:gen rad}.
\end{hypothesis}

\begin{theorem} \label{T:polyannuli convex2}
Under Hypothesis~\ref{H:variation2},
for $r \in [0, -\log \epsilon]^n$,
write $S(\calE,e^{-r}) = \{e^{-f_1(\calE,r)},
\dots, e^{-f_d(\calE,r)}\}$
with $f_1(\calE,r) \geq \cdots \geq f_d(\calE,r)$,
 and put $F_i(\calE,r) = f_1(\calE,r) + \cdots
+ f_i(\calE,r)$. 
Then the following hold for $i=1,\dots,d$.
\begin{enumerate}
\item[(a)] (Continuity)
The functions $f_i(\calE,r)$ and $F_i(\calE,r)$ are continuous.
\item[(b)] (Convexity)
The function $F_i(\calE,r)$ is convex.
\item[(c)] (Polyhedrality)
The functions $d! F_i(\calE,r)$ and $F_d(\calE,r)$ are transintegral
polyhedral on $[0, -\log \epsilon]^n$.
\end{enumerate}
\end{theorem}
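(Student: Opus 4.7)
The approach is to reduce Theorem \ref{T:polyannuli convex2} to Theorem \ref{T:polyannuli convex} by re-interpreting $\calE$ as a $\nabla$-module on a polyannulus $A_L([\epsilon,1]^n)$ relative to $K_0$, over a suitable base field $L$ obtained from $P$ by absorbing the $N-n$ directions transverse to $t_1,\dots,t_n$ into the base. This is the type of dimension reduction alluded to in Remark \ref{R:relative}. Once it is carried out, the three conclusions follow immediately from Theorem \ref{T:polyannuli convex}, since $[0,-\log\epsilon]^n$ (after shrinking $\epsilon$ into $\Gamma^*$ if necessary) is itself a TRP set.

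First, since the assertions are local at $x$, I would shrink $\overline{X}$ so that $P$ admits étale coordinates $\tilde{t}_1,\dots,\tilde{t}_n,u_{m+1},\dots,u_N$ over $\Spf\gotho_K$, where $N = \dim P$ and the $u$'s cut out directions at $x$ complementary to $D_1 \cup \cdots \cup D_n$. Let $P_0$ denote the closed formal subscheme of $P$ defined by $\tilde{t}_1 = \cdots = \tilde{t}_n = 0$; it is smooth over $\Spf\gotho_K$ of dimension $N-n$. Define $L$ to be the completion of $\Frac \Gamma(P_0,\calO)$ for the supremum norm on $(P_0)_K$. The derivations $\del_1,\dots,\del_m$ of $K$ and the derivations $\del/\del u_{m+1},\dots,\del/\del u_N$ arising from the étale coordinates all extend continuously to $L$, giving $m + (N-n)$ commuting derivations; together with the elements $u_1,\dots,u_m \in K$ and $u_{m+1},\dots,u_N$, they are intended to make $L$ a complete nonarchimedean field of rational type in the sense of Hypothesis \ref{H:rational type}.

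Next, I would extend the generic-fibre construction of Definition \ref{D:generic fibre} to $n$ variables, producing from $\calE$ a $\nabla$-module $\calF$ on $A_L([\epsilon,1]^n)$ relative to $K_0$. The crucial point is that the supremum norm $|\cdot|_R$ on the affinoid $\{y \in P_K : |\tilde{t}_i(y)| = R_i\}$ coincides with the $R$-Gauss norm on $A_L(R)$: if a function is expanded as $\sum_{J} c_J \tilde{t}^J$ with $c_J \in \Gamma(P_0,\calO)$ (via a Taylor expansion in the $\tilde{t}_i$'s, valid because $P$ is smooth along $P_0$), then both norms evaluate it as $\sup_J |c_J|_{\sup,(P_0)_K} R^J$; this is a multivariable analogue of Lemma \ref{L:same norm}. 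Combined with the matching of all $m+N$ derivations on the two sides, this gives the equality $S(\calE,R) = S(\calF,R)$ of scale multisets, so Theorem \ref{T:polyannuli convex} applied to $\calF$ yields continuity, convexity, and transintegral polyhedrality.

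The main obstacle lies in verifying that $L$ genuinely satisfies Hypothesis \ref{H:rational type} after the passage to the sup-norm completion: one must ensure that the derivations $\del/\del u_{m+i}$, initially defined on $\Gamma(P_0,\calO)$, extend to continuous commuting derivations of $L$ satisfying the sharp growth bound $|u_i^n \del_i^n(x)/n!| \leq |x|$. This compatibility between the algebraic Taylor expansion on $P_0$ and the analytic sup-norm completion is essentially the content of the generalization from \cite[Hypothesis~2.1.3]{kedlaya-swan1} to \cite[Remark~1.5.10]{kedlaya-xiao}, applied in the directions transverse to the $\tilde{t}_i$; once that foundational step is in hand, the remaining work is a routine multivariable extension of the one-variable constructions assembled in Section \ref{sec:relative}.
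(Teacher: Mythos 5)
Your proposal takes a genuinely different route from the paper's proof, but it also contains a confusion about the ambient dimension that both obscures the actual content of the reduction and makes the argument look harder than it needs to be.

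A clarification about Hypothesis~\ref{H:variation2} first: the requirement that the common zero locus of $t_1,\dots,t_n$ on $\overline{X}$ be precisely $\{x\}$ forces $\dim \overline{X} = n$, since the transverse intersection $D_1 \cap \cdots \cap D_n$ has codimension $n$. Hence $P$ has relative dimension $n$ over $\gotho_K$, your $P_0$ is $0$-dimensional --- in fact $P_0 = \Spf \gotho_{K'}$ for $K'$ the finite unramified extension of $K$ with residue field $k(x)$ --- and there are no auxiliary coordinates $u_{m+1},\dots,u_N$ at all. Consequently $L = K'$ is just a finite unramified extension of $K$, which carries no new derivations and is of rational type for trivial reasons; the ``main obstacle'' you flag (extending extra derivations to $L$ with the sharp growth bound) is vacuous. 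Once this is corrected, your ``multivariable generic fibre'' and its norm comparison amount to nothing more than Berthelot's fibration theorem \cite[Th\'eor\`eme~1.3.7]{berthelot} applied to the pair $(P_0, P)$: since $P_0$ is precisely the zero locus of $\tilde{t}_1,\dots,\tilde{t}_n$, the $\tilde{t}_i$ realize $]x[_P$ as the open polydisc $A_{K'}[0,1)^n$, so that the slices $\{|\tilde{t}_i| = R_i\}$ for $R_i \in (\epsilon,1)$ become the Gauss slices of a polyannulus over $K'$ and $\calE$ becomes a $\nabla$-module relative to $K_0$ on $A_{K'}$ of a suitable polyannulus. At that point Theorem~\ref{T:polyannuli convex} applies directly, since $[0,-\log\epsilon]^n$ is TRP. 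The invocation of a ``multivariable analogue of Lemma~\ref{L:same norm}'' and a ``Taylor expansion with coefficients in $\Gamma(P_0,\calO)$'' is the wrong lens; you should instead cite the fibration theorem explicitly.

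The paper's argument is quite different in mechanics: it reduces via Theorem~\ref{T:trp} to transrational line segments, uses blowups of $\overline{X}$ to rotate an arbitrary primitive integer direction to $(1,0,\dots,0)$, performs a one-dimensional generic-fibre pass that absorbs $\tilde{t}_2,\dots,\tilde{t}_n$ into a \emph{transcendental} extension $K'$ of $K$ (the completion of $K(\tilde{t}_2,\dots,\tilde{t}_n)$ for a Gauss norm, inside a suitable Fr\'echet completion), and then invokes the one-dimensional \cite[Theorem~2.4.4]{kedlaya-xiao}. What your approach buys is a cleaner reduction with no blowup bookkeeping and a single citation of the $n$-dimensional Theorem~\ref{T:polyannuli convex}. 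What it leaves open, beyond correcting the dimension confusion, is (i) the boundary faces $R_i = 1$, where the slice escapes the open tube and the polydisc identification does not apply directly --- this can be handled by continuity from the open interior, using overconvergence, but it should be said; and (ii) you should state and use that the derivations $\del/\del u_j$ on $K$ extend uniquely to $K'$ and that, under the fibration isomorphism, $\del/\del\tilde{t}_i$ matches $\del/\del t_i$, so that the scale multisets genuinely agree.
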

\begin{proof}
By Theorem~\ref{T:trp}, it suffices to check that $d! F_i(\calE,r)$
and $F_d(\calE,r)$ are transintegral polyhedral on any transrational
line segment $L$ contained in $[0, -\log \epsilon]^n$.
Let $L$ be such a segment parallel to the vector $a = (a_1,\dots,a_n)
\in \ZZ^n$ with $\gcd(a_1,\dots,a_n) = 1$.
For any indices $i \neq j$, we may replace $a_i$ by $a_i \pm a_j$
by blowing up or down on $\overline{X}$; we may thus reduce to 
the case where $a = (1,0,\dots,0)$.

We now reduce the problem to a corresponding problem in dimension
1, using an analogue of the generic fibre construction of
Definition~\ref{D:generic fibre}.
(Here it is important that we are working relative to a subfield
$K_0$ of $K$; see Remark~\ref{R:relative}.)
Let $R$ be the Fr\'echet
completion of 
\[
\Gamma(P, \calO) \otimes_{K[\tilde{t}_2, \dots, \tilde{t}_n]} 
K(\tilde{t}_2, \dots, \tilde{t}_n)
\]
for the norms $|\cdot|_{e^{-r}}$ for $r \in L$.
Let $K'$ be the integral closure in $R$ of the completion
of $K(\tilde{t}_2, \dots, \tilde{t}_n)$ 
for the $(e^{-r_2},\dots,e^{-r_n})$-Gauss
norm for some $r \in L$. (This does not depend on $r$ because
the elements of $L$ only differ in their first components.)
Then $R$ is an affinoid algebra over $K'$ in which $|\tilde{t}_1|_R \leq 1$.
Moreover, if we put $Y = \Maxspec R$,
then the subspace $\{y \in Y: |\tilde{t}_1(y)| < 1\}$
is isomorphic to the open unit disc over $K'$ with coordinate $\tilde{t}_1$.

For some $\delta > 0$, $\calE$ gives
rise to a $\nabla$-module $\calF$ relative to $K_0$ on the space
$\{y \in Y: \delta \leq |\tilde{t}_1(y)| \leq 1\}$.
On this space, we may carry out a computation analogous
to \cite[Theorem~2.4.4]{kedlaya-xiao} to deduce that
$d! F_i(\calE,r)$ and $F_d(\calE,r)$ are transintegral polyhedral on 
$L$.
\end{proof}

This in turn leads to an analogue of Theorem~\ref{T:convex variation1}.
\begin{defn}
Under Hypothesis~\ref{H:variation}, 
let $T$ be the simplex $\{(r_1,\dots,r_n) \in [0,1]^n: r_1 + \cdots 
+ r_n = 1\}$.
For $r \in T$, define the valuation $v_r$ on $k(\overline{X})$ to be the 
restriction from the $(r_1,\dots,r_n)$-Gauss valuation on 
$\Frac k \llbracket t_1,\dots,t_n \rrbracket$; this valuation is divisorial
if and only if $r \in T \cap \QQ^n$.
\end{defn}

\begin{theorem} \label{T:convex variation3}
Under Hypothesis~\ref{H:variation}, 
there exist $\epsilon \in (0,1]$ and functions
$b_1(\calE, r),\dots,b_d(\calE,r)$ on $-\log T$ such that
\begin{equation} \label{eq:stabilize3}
S(\calE, e^{-cr}) = \{e^{-cb_1(\calE,r)},\dots, e^{-cb_d(\calE,r)}\}
\qquad (c \in (0, \epsilon]; r \in -\log T).
\end{equation}
Moreover, for $i=1,\dots,d$,
the functions $d! (b_1(\calE,r) + \cdots + b_i(\calE,r))$ 
and $b_1(\calE,r) + \cdots + 
b_d(\calE,r)$ are convex and integral polyhedral.
\end{theorem}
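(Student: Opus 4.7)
The plan is to reduce to the setting of Hypothesis~\ref{H:variation2} and then exploit the polyhedrality of Theorem~\ref{T:polyannuli convex2} in the same spirit that Theorem~\ref{T:convex variation1} exploited Theorem~\ref{T:polyannuli convex}. Since the conclusion is local around $x$, I would first shrink $\overline{X}$ to an affine open neighborhood on which the common zero locus of $t_1,\dots,t_n$ is exactly $\{x\}$, and then lift to a smooth affine formal scheme $P$ over $\Spf \gotho_K$ equipped with $\tilde t_1,\dots,\tilde t_n \in \Gamma(P,\calO)$ lifting the $t_i$; such a lift exists by standard deformation theory for smooth affine schemes. This puts us in the setting of Hypothesis~\ref{H:variation2}.

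Applying Theorem~\ref{T:polyannuli convex2}, we obtain $\epsilon' \in (0,1)$ such that $d!\,F_i(\calE,r)$ and $F_d(\calE,r)$ are continuous, convex, and transintegral polyhedral on the cube $[0,-\log \epsilon']^n$. Because $\calE$ is overconvergent along $D$, its generic fibre is solvable at $1$ with respect to each coordinate direction, so $f_i(\calE,r) \to 0$ as $r \to 0$, whence $F_i(\calE,0)=0$. Writing $d!\,F_i(\calE,r) = \max_j \lambda_j(r)$ as the maximum of finitely many transintegral affine functionals $\lambda_j$, the vanishing at the origin forces every $\lambda_j$ active in some neighborhood of $0$ to have constant term zero; that is, the germ of $d!\,F_i$ at $0$ is the maximum of finitely many transintegral \emph{linear} functionals.

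Only finitely many such linear functionals appear, so by compactness of the simplex $-\log T$ there is a single uniform $\epsilon \in (0,1]$ such that $cr \in [0,-\log \epsilon']^n$ for all $r \in -\log T$ and $c \in (0,\epsilon]$, and such that throughout this region the maximum is attained among the zero-constant-term functionals. Consequently
\[
F_i(\calE, cr) = c\cdot G_i(r), \qquad F_d(\calE, cr) = c\cdot G_d(r),
\]
where $G_i, G_d$ are positively homogeneous, piecewise-linear, transintegral polyhedral functions on $-\log T$ (obtained by restricting the active linear functionals). Setting $b_1(\calE,r)+\cdots+b_i(\calE,r) := G_i(r)$ and defining the individual $b_i(\calE,r)$ as the corresponding differences produces the decomposition \eqref{eq:stabilize3}.

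Convexity of $d!(b_1+\cdots+b_i)$ and of $b_1+\cdots+b_d$ is inherited directly from that of the corresponding $F_i, F_d$. For integral polyhedrality, observe that the $G_i$ are positively homogeneous with transintegral linear pieces, so at rational points they take values in $\ZZ r_1 + \cdots + \ZZ r_n \subseteq \ZZ + \ZZ r_1 + \cdots + \ZZ r_n$; Theorem~\ref{T:integral polyhedral} then upgrades transintegral to integral polyhedrality. The main technical obstacle I foresee is verifying the uniformity of $\epsilon$ over all directions $r \in -\log T$; this is precisely where the polyhedrality (and not merely convexity) provided by Theorem~\ref{T:polyannuli convex2} becomes essential, since a merely convex function with $F_i(0)=0$ need not be linear along rays near $0$.
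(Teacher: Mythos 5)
Your proof is correct and essentially follows the paper's route: shrink $\overline{X}$ to achieve Hypothesis~\ref{H:variation2}, then invoke Theorem~\ref{T:polyannuli convex2} and extract $\epsilon$ and the $b_i$ from the polyhedral structure near the origin (with overconvergence forcing $F_i(\calE,0)=0$, so that only the zero-constant-term functionals are active there). You spell out what the paper compresses into the instruction to replace Theorem~\ref{T:polyannuli convex} with Theorem~\ref{T:polyannuli convex2} in the proof of Theorem~\ref{T:convex variation1}, implicitly observing that the reduction via Theorem~\ref{T:trp} becomes unnecessary once polyhedrality is already known on a closed cube containing the origin; note also that the closing appeal to Theorem~\ref{T:integral polyhedral} is harmless but not needed, since the active functionals near the origin already have integer constant term zero.
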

\begin{proof}
Given Hypothesis~\ref{H:variation}, we can achieve
Hypothesis~\ref{H:variation2} by shrinking $\overline{X}$ to a suitable
open affine neighborhood of $x$. We then deduce the claim by
replacing Theorem~\ref{T:polyannuli convex} with
Theorem~\ref{T:polyannuli convex2}
in the proof of Theorem~\ref{T:convex variation1}.
(The analogue of the solvability hypothesis is the hypothesis that
$\calE$ arises from an isocrystal on $X$ overconvergent along $D$.)
\end{proof}

Reinterpreting Theorem~\ref{T:convex variation3}
in terms of Swan conductors gives the following.
\begin{theorem} \label{T:convex variation}
Under Hypothesis~\ref{H:variation}, 
for $i=1, \dots, d$ and 
$r \in T \cap \QQ^n$, let
$b_i(\calE,r)$ denote the $i$-th largest differential ramification break
of $\calE$ along $v_r$, normalized with respect to $t_1 \cdots t_n$.
Put $B_i(\calE,r) = b_1(\calE,r) + \cdots + b_i(\calE,r)$.
Then the functions $d! B_i(\calE,r)$ and $B_d(\calE,r)$ are
continuous, convex, and integral polyhedral on $T$.
\end{theorem}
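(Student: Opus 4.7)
The plan is to derive Theorem~\ref{T:convex variation} directly from Theorem~\ref{T:convex variation3}; the real content is the identification of the scale-theoretic functions of the latter with the Swan-theoretic breaks at rational points of $T$.

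First I would reduce to Hypothesis~\ref{H:variation2} by shrinking $\overline{X}$ to an affine neighborhood of $x$ on which $t_1,\dots,t_n$ are global sections and their common zero locus is $\{x\}$. This is harmless because each $v_r$ is centered at $x$ and the differential ramification breaks along a divisorial valuation depend only on a formal neighborhood of its center on any blowup. Theorem~\ref{T:convex variation3} then supplies functions $\tilde{b}_i(\calE,r)$ for $r \in T$ whose partial sums enjoy the stated continuity, convexity, and integral polyhedrality on $T$ (via the identification of $T$ with its image under coordinatewise $-\log$).

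The main task is to verify that for $r \in T \cap \QQ^n$, the scale-theoretic value $\tilde{b}_i(\calE,r)$ agrees with the Swan-theoretic ramification break $b_i(\calE,r)$ of the statement. Write $r = (a_1/N, \dots, a_n/N)$ in lowest terms, so $\gcd(a_1,\dots,a_n)=1$ and $N = a_1+\cdots+a_n$. Perform a toric blowup $\beta: \widetilde{X} \to \overline{X}$ extracting an irreducible exceptional divisor $E$ realizing $v_r$, generically smooth because $\gcd(a_i)=1$. By Definition~\ref{D:geometric Swan} combined with Remark~\ref{R:diff Swan}, the natural-normalization breaks of $\calE$ along $v_r$ are computed via the scale multiset on the tube of $E$ in a formal lift of $\widetilde{X}$. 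Near the generic point of $E$, a local uniformizer $u_E$ satisfies $t_i = w_i u_E^{a_i}$ with $w_i$ a local unit, so the circle $|u_E| = \rho$ in the tube maps into the polyannular slice where $(|t_1|,\dots,|t_n|) = (\rho^{a_1},\dots,\rho^{a_n})$. Setting $c = -\log\rho$, this slice is precisely the one indexed by $e^{-cNr}$ in the polyannulus. Applying \eqref{eq:stabilize3} and reading off scales shows that the $i$-th natural break along $v_r$ equals $N\tilde{b}_i(\calE,r)$. Since $v_r$, normalized to have value group $\ZZ$, satisfies $v_r(t_1\cdots t_n) = N$, Definition~\ref{D:normalization} divides by the index $N$, yielding equality with $\tilde{b}_i(\calE,r)$ as required.

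With the identification in hand, the continuity, convexity, and integral polyhedrality of $d!\,B_i(\calE,r)$ and $B_d(\calE,r)$ on $T$ follow immediately from Theorem~\ref{T:convex variation3}, since integral polyhedral functions are automatically continuous. The principal obstacle is the scale-identification step itself: one must verify that the tube of $E$ is a relative annulus over the appropriate base (which is what Berthelot's fibration theorem provides, as invoked in Definition~\ref{D:geometric Swan}), and that the coordinate change from $(t_1,\dots,t_n)$ to the local uniformizer $u_E$ together with the units $w_i$ does not disturb the scale multiset; this invariance is exactly what Remark~\ref{R:extend Swan} guarantees.
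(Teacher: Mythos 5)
Your proof is correct and matches the paper's approach: reduce to Hypothesis~\ref{H:variation2}, invoke Theorem~\ref{T:convex variation3}, and then identify the scale-theoretic breaks with the (suitably normalized) Swan-theoretic breaks at rational $r$ by blowing up. The paper carries out the identification by reducing to the coordinate case $r=(1,0,\dots,0)$ via a chain of elementary blow-ups and blow-downs (as in its proof of Theorem~\ref{T:polyannuli convex2}), whereas you compute directly for general $r$ via a single toric blow-up and the $u_E$-coordinate bookkeeping; this is a cosmetic difference and your normalization arithmetic ($N=a_1+\cdots+a_n$, natural break $=N\tilde b_i$, then divide by $N$) is correct.
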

\begin{proof}
It suffices to check that the quantities $b_i(\calE,r)$ as defined in the
statement of the theorem coincide with those defined in 
Theorem~\ref{T:convex variation3}, as then that theorem implies
the claims. For this, impose
Hypothesis~\ref{H:variation2} 
as in the proof of Theorem~\ref{T:convex variation3}.
We may blow up or down on $\overline{X}$ as needed to reduce
the claim for general $r \in T \cap \QQ^n$
 to the claim for $r = (1,0,\dots,0)$,
in which case it is evident from Definition~\ref{D:extend Swan}.
\end{proof}

\begin{remark}
It may be possible to use Theorem~\ref{T:convex variation}
to give a new proof of local semistable reduction of
overconvergent $F$-isocrystals at monomial valuations
\cite[Theorem~6.3.1]{kedlaya-part3}. Such an argument would likely
give some results without having to assume that $K$ is discretely valued,
as is necessary in \cite{kedlaya-part3} due to the use of Frobenius
slope filtrations.
\end{remark}

\section{Variation near a surface divisor}

We now make a more careful study of 
the variation of differential Swan conductors on a surface,
in the vicinity of a single irreducible divisor. 

\subsection{A raw calculation}
\label{subsec:variation}

\begin{hypothesis}
Throughout this subsection:
\begin{itemize}
\item
assume that $k = k_0$ is algebraically closed;
\item
let $P$ be a smooth irreducible formal scheme over $\Spf \gotho_K$,
such that $Z = P_k$ is an open dense subscheme of a
curve of genus $g = g(Z)$;
\item
let $U$ denote an open dense affine subscheme of $Z$;
\item
let $L$ be the completion of $\Frac \Gamma(U, \calO)$ for the supremum
norm on $]U[_P$ (this does not depend on $U$);
\item
let $Y$ denote a fringed relative annulus over $]U[_P$ 
(as in Remark~\ref{R:proper fringed});
\item
let $\calE$ be a $\nabla$-module on $Y$ of rank $d$,
which is solvable at $1$.
\end{itemize}
\end{hypothesis}

\begin{defn} \label{D:local variation}
Choose $\epsilon \in (0,1)$  as in Definition~\ref{D:fringed}.
For a closed point $z \in Z$, choose a local uniformizer $\overline{x} \in \calO_{Z,z}$ 
of $Z$ at $z$. Choose a lift $x$ of $\overline{x}$ to
$\Gamma(Q, \calO)$ for some open dense formal subscheme $Q$ of $P$
containing $z$.
This choice gives an isomorphism $]z[_P \times A_K[0,1) \cong A_K[0,1)^2$;
for each $\rho \in (\epsilon,1)$,
for $r \in (0,+\infty)$ in some neighborhood of 0 (depending on $\rho$),
we may then compute $S(\calE, (\rho^r, \rho))$ and $T(\calE,(\rho^r,\rho))$
in the sense of Definition~\ref{D:gen rad}. To indicate the dependence on $z$,
we write these as $S(\calE,z,(\rho^r,\rho))$ and $T(\calE,z,(\rho^r,\rho))$.
We extend the definitions to $r = 0$ by putting $S(\calE,(1,\rho))
= S(\calF_\rho)$ and $T(\calE,(1,\rho)) = T(\calF_\rho)$, 
for $\calF$ the generic fibre of $\calE$.

Note that we have omitted the dependence on $\overline{x}$ and $x$ from the
notation. That is because we are only interested here
in behavior as $r$ approaches $0$, 
in which limit the choice of $x$ (or $\overline{x}$)
does not matter. To see this,
suppose $x' \in \Gamma(Q, \calO)$ also lifts a local uniformizer of $Z$ at $z$.
We can then write $x' = \sum_{i=0}^\infty
c_i x^i$ with $|c_0| < 1$, $|c_1| = 1$, and $|c_i| \leq 1$ for $i > 1$. If
$\rho^r \geq |c_0|$, then $|x|_{\rho^r} = |x'|_{\rho^r}$.
Hence for each $\rho \in (0,1)$, for $r \in (0, +\infty)$ sufficiently close to
0, the quantities $S(\calE,z,(\rho^r,\rho))$ and $T(\calE,z,(\rho^r,\rho))$
are the same regardless of whether we use $x$ or $x'$ to define the isomorphism
$]z[_P \times A_K[0,1) \cong A_K[0,1)^2$. (The definitions for $r=0$
visibly do not depend on this choice.)
\end{defn}

\begin{prop} \label{P:div subhar1}
We can choose a subset $R$ of $(0,1)$ of the form
$(\epsilon,1) \setminus R'$, where $R'$ is a set with discrete limit points,
such that the following statements hold.
\begin{enumerate}
\item[(a)]
For each $z \in Z$ and $\rho \in R$,
there exist affine functions $b_1(\rho,r), \dots, b_d(\rho,r)$ on 
$[0,a]$, for some $a>0$, such that
\[
S(\calE,z,(\rho^r,\rho)) = \{\rho^{b_1(\rho,r)}, \dots,
\rho^{b_d(\rho,r)}\} \qquad
(r \in [0,a]).
\]
\item[(b)]
For $z \in Z$ and $\rho \in R$, put
\[
f(\rho,z,r) = \sum_{\alpha \in S(\calE,z,(\rho^r,\rho))} 
\log_\rho \alpha
\]
and write $f'(\rho,z)$ for the right slope of $f(\rho,z,r)$ at $r=0$.
Then there exist $\ell \in \{0, 1,\dots,d\}$ 
(independent of $\rho$)
and a choice of the open dense subscheme $U$ of $Z$ 
(dependent on $\rho$)
such that $f'(\rho,z) = -\ell$ for all $z \in U$.
\item[(c)]
Assume that $Z$ is proper.
With notation as in (b), we have
\begin{equation} \label{eq:div subhar1}
\sum_{z \in Z} (f'(\rho,z) + \ell) \geq (2-2g(Z))\ell.
\end{equation}
\end{enumerate}
\end{prop}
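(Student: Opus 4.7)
The plan is to reduce (a) and (b) to the polyhedrality and integrality statements of Section~\ref{sec:polyannuli}, and to attack (c) via a Riemann--Roch-style bookkeeping on the curve $Z$.

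For (a), I would first apply Theorem~\ref{T:break decomp2} to shrink $Y$ (and $U$) so that $\calE$ splits into uniform-break summands $\calE_b$. Given a closed point $z \in Z$, the choice of uniformizer $\overline{x}$ and its lift $x$ as in Definition~\ref{D:local variation} identifies a neighborhood of $]z[_P \times A_K[0,1)$ with a two-dimensional polyannulus in the coordinates $(x,t)$. Applied here, Theorem~\ref{T:polyannuli convex2} yields that the partial sums $d! F_i$ and $F_d$ of the scale data are transintegral polyhedral in $(r, -\log \rho)$, and Theorem~\ref{T:convex variation1} together with solvability of $\calE$ at $|t|=1$ upgrades this to integral polyhedrality for $r$ near $0$. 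Since each such polyhedral function has only finitely many faces, projecting their boundaries to the $\rho$-axis and taking the union over the (finitely many) uniform-break summands produces a set $R' \subset (\epsilon,1)$ with discrete limit points; outside $R'$, the scales admit affine representatives $b_1(\rho, r), \dots, b_d(\rho, r)$ on some interval $[0,a]$.

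For (b), fix $\rho \in R$. Integrality of each right slope $b_i'(\rho,0)$, and hence of $f'(\rho,z)$, is inherited from the integral polyhedrality just established. To obtain a dense open $U \subseteq Z$ on which $f'(\rho,z)$ is independent of $z$, I would further shrink $U$ so that the break decomposition of $\calE$ is defined on a fringed relative annulus over $]U[_P$; for $z \in U$, the polyhedral data entering $b_i(\rho,r)$ comes from restriction of this global decomposition to the generic point of the tube at $z$, and so does not depend on $z$. Hence $f'(\rho,z)$ takes a common integer value $-\ell$ on $U$. That $\ell$ is independent of $\rho$ follows because the right slope at $r=0$ is a locally constant integer-valued function of $\rho$ on the cofinite subset $R$ of $(\epsilon,1)$.

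For (c), which is the genuine obstacle, my plan is to reinterpret each local defect $f'(\rho,z) + \ell$ as the order at $z$ of a rational section of a line bundle on $Z$ and then apply a global degree argument. Concretely, passing to the top-break summand $\calE_b$ and choosing a cyclic vector / twisted-polynomial representation as in the proof of Proposition~\ref{P:hard case}, the location of the first breakpoint in the Newton polygon of the cyclic vector changes as $z$ varies through $Z \setminus U$; the gap between $f'(\rho,z)$ and its generic value $-\ell$ records the local order of vanishing at $z$ of a product of coefficients, which I expect to identify with a section of a suitable line bundle $\calL(\rho)$ on $Z$ built from a twist of $(\Omega^1_Z)^{\otimes \ell}$. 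Once this identification is in place, the left side of the desired inequality becomes the degree of $\calL(\rho)$, which should dominate $(2-2g(Z))\ell$ via an injection of line bundles forced by convexity of $f(\rho,z,r)$ at $r=0$. Pinning down the precise line bundle and its comparison with the anti-canonical bundle is the main difficulty; I would verify the construction first in the rank-one Dwork case of Example~\ref{ex:Dwork} and then bootstrap via $\det \calE$ and Frobenius antecedent descent.
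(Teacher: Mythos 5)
Your outline of (a)--(b) is in the right spirit (reduce via Theorem~\ref{T:break decomp2} to summands of uniform break, then extract affine behavior for $r$ near $0$), but it has a structural gap: the set $R'$ that falls out of your polyhedrality argument is constructed \emph{locally at each $z$}, by projecting the face boundaries of a polyhedral function on a polyannulus near the tube at $z$; a priori each $z$ hands you a different $R'$, and nothing in your sketch shows that the union over $z \in Z$ still has discrete limit points. The paper instead takes $R'$ to consist of the finitely many $\rho$ with $T(\calE,\rho) = |p|^{p^{-m}/(p-1)}$ for some $m$, which are the thresholds for taking Frobenius antecedents of the \emph{generic fibre} $\calF$; this $R'$ manifestly does not depend on $z$. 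Similarly, for (b) your claim that $\ell$ is independent of $\rho$ because ``the right slope is a locally constant integer-valued function on the cofinite subset $R$'' doesn't hold up as stated: $R$ need not be cofinite (only co-discrete), and local constancy on a disconnected set doesn't give constancy. The paper instead identifies $\ell$ directly in terms of which of $\del_1, \del_2$ are eventually dominant on each indecomposable summand ($\ell = d$ if only $\del_1 = \partial/\partial u$ is eventually dominant, $\ell = 0$ otherwise), which is a property of $\calF$ alone and so is independent of $\rho$ for free.

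For (c), you've correctly sensed the Riemann--Roch flavor but not pinned it down, and the ingredient you're missing is \emph{rotation}. The paper, working with a cyclic vector for a Frobenius antecedent and its twisted polynomial $Q = T^d + \sum a_h T^h$, handles the two eventually dominant derivations asymmetrically. When $\del_2 = \partial/\partial t$ is dominant, writing $a_h = \sum_j f_j t^j$ and normalizing the minimizing coefficient $f_{j(0)}$ by a scalar $\lambda_{j(0)}$, the quantity $\sum_j m_{2,j}(\rho,z)$ is literally the order of vanishing at $z$ of the reduction of $f_{j(0)}\lambda_{j(0)}^{-1} \in \Gamma(Z,\calO)$; on a proper $Z$ this sums to $0$, not to $\deg$ of some nontrivial line bundle. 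When only $\del_1 = \partial/\partial u$ is dominant, one pulls back along $t \mapsto t^{p^N}$ and then $u \mapsto u + t$; after rotation the $\del_2$-scales pick up the factor $\rho^{rc + r - 1}$ where $c = \ord_z(du)$, so $\sum_z \sum_j (m_{1,j}(\rho,z) + 1) = \deg(\Omega^1_Z)^\vee \cdot d = (2-2g(Z))d$. This is where the canonical bundle actually enters --- via the zero divisor of $du$ on $Z$, not via a line bundle assembled from the Newton polygon coefficients as you proposed. The inequality in (c) then comes from $f'(\rho,z) \geq \sum_j m_{2,j}(\rho,z)$ (when $\ell = 0$) together with the vanishing of the right-hand total. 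In short, your plan for (c) is heuristically sound but the precise mechanism --- rotation plus explicit identification of the two relevant ``degrees'' --- is absent, and without it the argument does not close.
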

\begin{proof}
There is no harm in shrinking $U$ or $Y$, so we may assume that $\calE$
is indecomposable and remains so upon further shrinking of $U$ or $Y$.
We may also assume that we can choose 
$u \in (\Frac \Gamma(P,\calO)) \cap \Gamma(]U[, \calO)$ 
such that $du$ freely generates
$\Omega^1_{P/\gotho_K}$ over $]U[_P$; 
put $\del_1, \del_2 = \frac{\del}{\del u},
\frac{\del}{\del t}$. 
Let $s_{i,1}(\rho,z,r) \leq \dots \leq s_{i,d}(\rho,z,r)$ 
be the reciprocals of the elements
(counted with multiplicity) of the scale multiset of $\del_i$ 
on $\calE_{(\rho^r,\rho)}$ in the bidisc $]z[_P \times A_K[0,1)$.
Choose $\epsilon \in (0,1)$
as in Definition~\ref{D:fringed},
and also satisfying $T(\calE,\rho) = \rho^b$ for all $\rho \in (\epsilon,1)$,
where $b$ is the highest break of $\calE$.

Set notation as in the proof of Proposition~\ref{P:hard case}.
Choose $i$ such that $\del_i$ is eventually dominant for $\calE$.
Then for all
$\rho \in I$ except for a discrete subset $R'_I$,
we can read off the $s_{i,j}(\rho,z,r)$  from the Newton polygon
of the twisted polynomial $Q$: for $r=0$ they are all equal to
$T(\calE,\rho) = \rho^b$ by the conclusion of Proposition~\ref{P:hard case},
so for $r$ close to zero, we do not cross the threshold set by
\cite[Proposition~1.1.9]{kedlaya-swan1}
for reading off scales from slopes of the Newton polygon.
We deduce that for each $\rho \in I \setminus R'_I$, 
we can choose $a>0$ such that 
each function $r \mapsto \log s_{i,j}(\rho,z,r)$
is affine for $r \in [0,a]$.
(That is because these functions measure the slopes of a Newton polygon
whose vertices vary linearly in $r$ when $r$ is 
sufficiently close to $0$.)
In particular, we may apply \cite[Proposition~1.1.9]{kedlaya-swan1}
or \cite[Theorem~2.3.5]{kedlaya-xiao}
to perform a simultaneous scale decomposition of $\calE$ for $\del_i$
over $A_K(S)$, for $S = \{(\rho^r,\rho): r \in (0,a)\}$.
Let $m_{i,j}(\rho,z)$ be the right slope of $\log_\rho s_{i,j}(\rho,z,r)$
at $r=0$.

Consider the case $i=2$. 
Given $h \in \{0, \dots, d-1\}$,
write $a_h = \sum_j f_j t^j$; by the choice of $\rho$,
there is a unique $j = j(h)$ which minimizes $|f_j|_L \rho^j$.
Choose $\lambda_j \in K^\times$ with $|\lambda_j| = |f_j|_L$;
then if we shrink $U$ so as not to meet the zero locus of
the reduction of $\lambda_{j(h)}^{-1} f_{j(h)}$ for any $h$,
then the $m_{2,j}(\rho,z)$ vanish for all $z \in U$ by
Lemma~\ref{L:same norm}. Also,
$\sum_{j=1}^d m_{2,j}(\rho,z)$ equals the order of vanishing at $z$
of the reduction of $f_{j(0)} \lambda_{j(0)}^{-1}$, so its sum over
$z \in Z$ equals 0 if $Z$ is proper.

Consider the case $i=1$.
Rotate as in the proof of Proposition~\ref{P:hard case},
i.e., first pull back along $t \mapsto t^{p^N}$ for a large
integer $N$, then along $u \mapsto u + t$. 
The effect of the first step is to pull back the action of $\del_1$ unchanged,
while replacing the action of $\del_2$ by the pullback action of $\del_2$ times
$p^N t^{p^N - 1}$.
The effect of the second step is to pull back the action of $\del_1$ unchanged,
while replacing the action of $\del_2$ by the pullback action of $\del_2
+ \del_1$.
Consequently, after rotation with $N$ sufficiently large,
for $r$ sufficiently small
the reciprocals of the scale multiset of $\del_2$ on $\calE_{(\rho^r,\rho)}$
in the 
bidisc $]z[_P \times A_K[0,1)$ consist of
\[
\rho^{rc+r-1} s_{1,1}(\rho,z,r), 
\dots, \rho^{rc+r-1} s_{1,d}(\rho,z,r),
\]
where
$c$ equals the order of vanishing of the differential $du$ on
$Z$ at the point $z$. (The factor $\rho^{r-1}$ comes from the change of
normalization in measuring the scale of $\del_2$ rather than $\del_1$.
The factor $\rho^{rc}$ comes from the fact that for $x$ a local parameter
of $Z$ at $z$, $\del_1$ equals $x^{-c}\,\frac{\partial}{\partial x}$ 
times a unit
in $\calO_{Z,z}$.)
In particular, each $m_{1,j}(\rho,z)$ equals
$-1$ for all but finitely many $z \in Z$,
and the sum of $d + \sum_{j=1}^d m_{1,j}(\rho,z)$ over all $z \in Z$
equals $(2-2g(Z))d$ if $Z$ is proper.

If $\del_i$ is eventually
dominant for only one $i$, then for each $z$,
we have $S(\calE,z,(\rho^r,\rho)) = \{s_{i,1}(\rho,z,r), \dots, 
s_{i,d}(\rho,z,r)\}$
for $r$ close to zero, so all the desired results follow with
\[
\ell = \begin{cases} d  & i=1 \\
0 & i=2.
\end{cases}
\]
(The excluded set $R'$ consists of those $\rho$ not appearing in $I
\setminus R'_I$ for any $I$; the only limit points of this set are
those $\rho$ for which $T(\calE,\rho) = |p|^{p^{-m}/(p-1)}$ for some
$m \in \ZZ$.) 
Assume hereafter that both $\del_1,\del_2$ are eventually dominant;
we will again prove the claims with $\ell=0$.

To deduce (a), note that for each $z$, we can choose $a > 0$ such that for
$S = \{(\rho^r,\rho): \rho \in I, r \in (0,a)\}$,
we obtain a simultaneous scale decomposition of $\calE$
for both $\del_1$ and $\del_2$ over $A_K(S)$.
(Compare \cite[Theorem~3.4.2]{kedlaya-xiao}.)

To deduce (b), note that by shrinking $U$, we can ensure that
for all $z \in U$, 
$s_{2,j}(\rho,z,r)$ is constant for small $r$,
and $m_{2,j}(\rho,z) = 0$; by rotation,
we can also ensure that for all $z \in U$, $m_{1,j}(\rho,z) = -1$.
Consequently, for $z \in U$, for $r$ close to 0, $S(\calE,z,(\rho^r,\rho))$
consists of $T(\calE,\rho)$ with multiplicity $d$.

To deduce (c) if $Z$ is proper, note that 
$f'(\rho,z) \geq \sum_{j=1}^d m_{2,j}(\rho,z)$, 
and summing the right side over
$z$ yields 0.
\end{proof}

\begin{remark}
One might like to prove Proposition~\ref{P:div subhar1} directly by reading
off the Swan conductor from a twisted polynomial, without having
to decompose into indecomposables. There are two reasons
why this will not work. One is the fact that different derivations may
be dominant on different components of the break decomposition.
The other is 
the limitation on slopes in \cite[Proposition~1.1.9]{kedlaya-swan1}:
the presence of some $\lambda$ in a radius multiset masks the presence
of any $\lambda' > \lambda^{1/p}$ when viewing Newton polygons.
By working in the indecomposable case, we fail to encounter this masking for
$r$ sufficiently small because we have a uniform break at $r=0$.
\end{remark}

\begin{remark}
The arguments in \cite[\S 2.4]{kedlaya-xiao} are in a similar spirit.
Using ideas from there, it should be possible to remove the restriction
to the set $R$ in Proposition~\ref{P:div subhar1}.
\end{remark}

\subsection{Subharmonicity}

We now obtain a subharmonicity theorem for differential Swan 
conductors on a surface.

\begin{hypothesis} \label{H:divisorial3}
Assume that $k = k_0$ is algebraically closed.
Let $\Xbar$ be a smooth irreducible projective surface over $k$,
let $Z$ be a smooth irreducible divisor on $\Xbar$,
and let $v_0$ be the divisorial valuation on $k(\Xbar)$ measuring
order of vanishing along $Z$.
Let $W$ be a divisor not containing $Z$,
and put $Y = \Xbar \setminus W$; note that $Y \cap Z$ is open dense in $Z$.
Let $X$ be an open dense subscheme of $Y$, and let
$\calE$ be an isocrystal of rank $d$ 
on $X$ overconvergent along $Y \setminus X$.
\end{hypothesis}

\begin{defn} \label{D:restrict to annulus}
Let $P$ be a smooth formal scheme over $\Spf \gotho_K$ with 
special fibre $Z \cap Y$. As in Definition~\ref{D:geometric Swan},
for any open affine subscheme $Z_0$ of $Z \cap Y$, 
we obtain from $\calE$ a
$\nabla$-module on a fringed relative annulus over $]U[_P$,
for some open dense subscheme $U$ of $Z_0$. Moreover,
any two such $\nabla$-modules so obtained become isomorphic on
a suitably small fringed relative annulus, so the construction glues to
give a $\nabla$-module on a fringed relative annulus 
over $]U[_K$, for some open dense
subscheme $U$ of $Z \cap Y$; 
we will also use the symbol $\calE$ to refer to
this $\nabla$-module.
\end{defn}

\begin{defn} \label{D:local variation3}
Given $z \in Z \cap Y$, choose $x \in \calO_{\Xbar,z}$ whose
zero locus has a single component at $z$,
which is smooth of multiplicity 1 and meets $Z$ transversely.
For $r \in \QQ \cap [0,1]$, 
let $v_r(z; x)$ be the valuation on $k(\Xbar)$ corresponding to the divisor
$x \sim t^r$ 
(in the sense of Example~\ref{ex:Dwork})
on a suitable blowup of $\Xbar$ at $z$, for $t$ a local parameter
of $Z$ at $z$.
If we identify the completion of the local ring $\calO_{\Xbar,z}$ with
$k \llbracket x,t \rrbracket$, then $v_r(z; x)$ is induced by the 
$(r,1)$-Gauss valuation on $k \llbracket x,t \rrbracket$.
The latter valuation is invariant under any continuous automorphism of 
$k \llbracket x,t \rrbracket$ of the form 
$t \mapsto ut, x \mapsto \lambda x + w$
where $u$ is a unit in $k \llbracket x,t \rrbracket$, $\lambda \in k^\times$,
and $w$ belongs to the ideal $(t, x^2)$.
This allows replacing $x$ by
any other $x' \in \calO_{\Xbar,z}$
whose zero locus has a single component at $z$, which is smooth of multiplicity
1 and meets $Z$ transversely. It also allows replacing $t$ by another
local parameter of $Z$ at $z$.
Consequently, those replacements do not affect the definition of $v_r(z; x)$.

Let $b_1(\calE,z,x,r) \geq \cdots \geq b_d(\calE,z,x,r)$ 
and $\Swan_Z(\calE,z,x,r)$ be the differential highest breaks
and Swan conductor of 
$\calE$ along $v_r(z; x)$, normalized with respect to $t$. 
By Theorem~\ref{T:convex variation}, 
the function $r \mapsto b_j(\calE,z,x,r)$ is affine in a neighborhood of $0$.
It thus extends continuously to all $r \in [0,a]$ for some $a>0$.
\end{defn}

\begin{lemma} \label{L:uniform rel}
With notation as in Definition~\ref{D:local variation},
there exist $\epsilon \in (0,1)$
and $a>0$ (depending on $z$) such that 
for $r \in [0,a]$
and $\rho \in (\epsilon,1)$,
$S(\calE,z,(\rho^r,\rho))$ is defined and
\[
S(\calE,z,(\rho^r,\rho)) = \{\rho^{b_1(\calE,z,x,r)},
\dots, \rho^{b_d(\calE,z,x,r)}\}.
\]
\end{lemma}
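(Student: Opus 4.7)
The plan is to reduce to a local two-divisor configuration at $z$ and then invoke Theorem~\ref{T:convex variation3} together with the identification supplied by Theorem~\ref{T:convex variation}. First, I would shrink $\Xbar$ to an affine open neighborhood $\Xbar_0$ of $z$ small enough that $D_1 := Z \cap \Xbar_0$ and $D_2 := \{x = 0\} \cap \Xbar_0$ are smooth irreducible divisors meeting transversely and only at $z$, and put $D = D_1 \cup D_2$, $X_0 = \Xbar_0 \setminus D$. The essential point is that the restriction of $\calE$ to $X_0$ is overconvergent along $D$: overconvergence along $D_1 = Z$ is assumed, and the strict neighborhood of $]X[_P$ on which $\calE$ lives (namely, something of the form $\{|t| > \eta\}$ in the local formal lift $P$) is simultaneously a strict neighborhood of $]X_0[_P$, which gives overconvergence along $D_2$ automatically. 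Hypothesis~\ref{H:variation} thus holds locally at $z$.

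Theorem~\ref{T:convex variation3}, combined with the identification in Theorem~\ref{T:convex variation}, then produces a uniform $\epsilon_0 \in (0,1]$ and functions $B_j(\calE, r_1, r_2)$ on the simplex $T = \{(r_1, r_2) \in [0,1]^2 : r_1 + r_2 = 1\}$ — equal to the differential ramification breaks of $\calE$ at the Gauss valuation $v_{(r_1, r_2)}$ normalized with respect to $xt$ — satisfying $S(\calE, (e^{-cr_1}, e^{-cr_2})) = \{e^{-cB_j(\calE, r_1, r_2)}\}$ for all $c \in (0, \epsilon_0]$ and $(r_1, r_2) \in T$. The substitution $(r_1, r_2) = (r/(r+1), 1/(r+1))$ and $c = -(r+1)\log\rho$ identifies $(e^{-cr_1}, e^{-cr_2})$ with $(\rho^r, \rho)$; the admissibility condition $c \leq \epsilon_0$ becomes $\rho \geq \exp(-\epsilon_0/(r+1))$, which is uniform over $r \in [0, a]$ by taking $\epsilon := \exp(-\epsilon_0/(a+1))$.

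The main remaining step — the only real technical point — is to verify that
\[
(r+1)\, B_j(\calE,\, r/(r+1),\, 1/(r+1)) = b_j(\calE, z, x, r).
\]
Both sides record the same intrinsic break of $\calE$ at the common place of $k(\Xbar)$ underlying $v_{(r_1, r_2)}$ and $v_r(z; x)$ — these valuations are equivalent and related by the scaling $v_r(z; x) = (r+1) v_{(r_1, r_2)}$ — but in different normalizations. Writing $r = a/b$ in lowest terms, the value group of $v_{(r_1, r_2)}$ is $(1/(a+b))\ZZ$ with $v_{(r_1, r_2)}(xt) = 1$ of index $a+b$, whereas the value group of $v_r(z; x)$ is $(1/b)\ZZ$ with $v_r(z; x)(t) = 1$ of index $b$; by Definition~\ref{D:normalization}, the two normalized breaks therefore differ by the ratio $(a+b)/b = r+1$, precisely the factor appearing in the translation above. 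Combining these steps yields the claimed formula and the uniform $\epsilon$.
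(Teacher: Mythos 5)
Your proof is correct and takes the same route as the paper, whose entire proof of this lemma is the single sentence ``Apply Theorem~\ref{T:convex variation3}.'' You have simply spelled out the steps that the paper leaves implicit: the local reduction at $z$ to Hypothesis~\ref{H:variation} (for which overconvergence along the auxiliary divisor $\{x=0\}$ does come for free, since a strict neighborhood of $]X[_P$ is automatically a strict neighborhood of the smaller tube $]X_0[_P$), the reparametrization identifying $(e^{-cr_1},e^{-cr_2})$ with $(\rho^r,\rho)$, and the normalization comparison showing the factor $r+1$ converts breaks normalized with respect to $xt$ into breaks normalized with respect to $t$.
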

\begin{proof}
Apply Theorem~\ref{T:convex variation3}.
\end{proof}

The value of $\epsilon$ in Lemma~\ref{L:uniform rel} depends on the
choice of $z$. However, we can use the following argument to make a uniform
choice.
\begin{lemma} \label{L:div subhar1 uniform}
With notation as in Definition~\ref{D:local variation},
suppose that for some $\rho_0 < \rho_1 \in (\epsilon,1)$
and some $c \in \RR$,
$S(\calE,z,(\rho_j^r,\rho_j)) = \{\rho_j^{b_1(\calE,z,x,0)+cr},
\dots, \rho_j^{b_d(\calE,z,x,0)+cr}\}$ for $j=0,1$ and $r \in [0,a]$.
Then there exists $b>0$ such that
for all $\rho \in [\rho_0,1)$ and all $r \in [0,b]$,
$S(\calE,z,(\rho^r,\rho))$ is defined and
\[
S(\calE,z,(\rho^r,\rho)) = \{\rho^{b_1(\calE,z,x,0)+cr},
\dots, \rho^{b_d(\calE,z,x,0)+cr}\}.
\]
\end{lemma}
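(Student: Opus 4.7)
The plan is to pass to log-radius coordinates $(r_1,r_2) = (-\log|x|,-\log|t|)$, in which the Gauss point $(\rho^r,\rho)$ corresponds to $s(r,1) = (rs,s)$ with $s = -\log\rho$. Setting $s_j = -\log\rho_j$ and writing the scale multiset at $(r_1,r_2)$ as $\{e^{-f_i(r_1,r_2)}\}$ with $f_1\geq\cdots\geq f_d$, let $F_i = f_1 + \cdots + f_i$. The hypothesis then translates to
\[
F_i(rs_j,s_j) = s_j(C_i + icr) \qquad (j=0,1;\ r \in [0,a];\ i=1,\dots,d),
\]
where $C_i = b_1(\calE,z,x,0) + \cdots + b_i(\calE,z,x,0)$. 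The goal is to promote this identity from $\rho \in \{\rho_0,\rho_1\}$ to all $\rho \in [\rho_0,1)$ by a one-variable convexity argument along each ray $s \mapsto s(r,1)$.

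First I would restrict $\calE$ to a suitable $2$-dimensional generalized polyannulus sitting inside $Y$ around $z$ and invoke Theorem~\ref{T:polyannuli convex} to deduce that each $F_i$ is continuous, convex, and transintegral polyhedral on bounded TRP subsets. I would then repeat the argument from the proof of Theorem~\ref{T:convex variation1} --- applying \cite[Theorem~2.7.2]{kedlaya-swan1} to each rational direction through the origin --- to extend $F_i$ by continuity to the origin with $F_i(0,0) = 0$.

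Next, for each rational $r \in [0,a]$, I would regard $\phi_i(s) = F_i(rs,s)$ as a convex function of $s \in [0,s_0]$. Setting $M = C_i + icr$, the three values $\phi_i(0) = 0$, $\phi_i(s_1) = s_1M$, $\phi_i(s_0) = s_0M$ are collinear on the line $y = sM$, so $\psi_i = \phi_i - sM$ is convex with three distinct zeros at $0, s_1, s_0$. Chord interpolation between $s=0$ and $s=s_0$ gives $\psi_i \leq 0$ on $[0,s_0]$, while convexity applied to the triples $\{s,s_1,s_0\}$ for $s<s_1$ and $\{0,s_1,s\}$ for $s\in(s_1,s_0)$ (in each case interpolating $s_1$ between the outer two points) gives $\psi_i \geq 0$, so $\psi_i \equiv 0$. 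Continuity of $F_i$ then propagates the identity to irrational $r \in [0,a]$. Taking $f_i = F_i - F_{i-1}$ yields $f_i(rs,s) = s(b_i + cr)$, so the scale multiset at $(\rho^r,\rho)$ equals $\{\rho^{b_i+cr} : 1 \leq i \leq d\}$ for all $\rho \in [\rho_0,1)$ and $r \in [0,a]$. A fringed-annulus argument in the spirit of Lemma~\ref{L:intersection3}, combined with the hypothesis already guaranteeing that the scales are defined at $\rho = \rho_0$ throughout $r \in [0,a]$, then ensures that the relevant Gauss points lie in the domain of $\calE$ for some uniform $b \leq a$.

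The main obstacle is the continuity extension of $F_i$ at the origin: without $\phi_i(0) = 0$, the function $\psi_i$ has only two known zeros and may well dip strictly below its chord in the interior, so the three-zero convexity principle breaks down and the identity cannot be promoted beyond the slab $\rho \in [\rho_0,\rho_1]$. This continuity rests on the polyhedrality supplied by Theorem~\ref{T:polyannuli convex}, which in turn hinges on the $1$-dimensional break decomposition of \cite[Theorem~2.7.2]{kedlaya-swan1}.
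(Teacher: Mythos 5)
Your proposal is correct and follows essentially the same route as the paper's own proof: both hinge on convexity of $F_i(rs,s)$ in $s$ (via the polyhedrality theorem on polyannuli), continuous extension to $s=0$ with value $0$ using overconvergence/solvability, and the fact that a convex function agreeing with a linear function at the three points $s=0$, $s=-\log\rho_1$, $s=-\log\rho_0$ must be linear on the whole interval. The only cosmetic differences are that you phrase the origin extension through the one-variable break decomposition of \cite[Theorem~2.7.2]{kedlaya-swan1} where the paper simply cites overconvergence, and that you spell out the three-point chord argument in slightly more detail; both reflect the same underlying mechanism.
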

\begin{proof}
Choose $b \in [0,a]$ so that $S(\calE,z,(\rho^r,\rho))$ is defined for 
all $\rho \in [\rho_0,1)$ and all $r \in [0,b]$.
For $i=1,\dots,d$, and $r \in [0,b] \cap \QQ$,
the function $F_i(\calE,(rs,s))$ is convex for $s \in (0,-\log \rho_0]$
by Theorem~\ref{T:polyannuli convex2},
and extends continuously to $s=0$ with the value 0
because $\calE$ is overconvergent.
On the other hand, it agrees with a linear function at the three
values $s = 0, -\log \rho_1, - \log \rho_0$, so it must be linear
on all of $[0, -\log \rho_0]$.
This proves the claim for $r \in [0,b] \cap \QQ$;
the full claim follows by continuity (Theorem~\ref{T:polyannuli convex2}).
\end{proof}
\begin{cor} \label{C:uniform rel1}
In Lemma~\ref{L:uniform rel}, the value of $\epsilon$ can be chosen
independent of $z \in Z \cap Y$. Moreover, for all but finitely many
$z \in Z \cap Y$, either $b_i(\calE,z,x,r)$ or $b_i(\calE,z,x,r)+r$
(depending on whether $\del_2$ is or is not eventually dominant on the
corresponding component of $\calE$) is constant for $r$ in some
neighborhood of $0$ (depending on $z$).
\end{cor}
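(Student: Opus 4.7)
The plan is to deduce both assertions by reducing to the indecomposable case and then invoking Proposition~\ref{P:div subhar1} in combination with Lemma~\ref{L:div subhar1 uniform}. First I would apply Theorem~\ref{T:break decomp2} to decompose $\calE$ (after shrinking the fringed relative annulus and removing finitely many points from $Z \cap Y$) as a direct sum of uniform-break subquotients, and then shrink further to ensure each summand remains indecomposable under all further shrinkings. The points of $Z \cap Y$ discarded in this process form the finite exceptional set allowed by the statement, so it suffices to prove the corollary for a single indecomposable summand $\calE$ of rank $d$.

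Under this assumption, Proposition~\ref{P:div subhar1} provides a value $\epsilon_0 \in (0,1)$ and a subset $R \subseteq (\epsilon_0, 1)$ whose complement has discrete limit points, such that for every $z \in Z \cap Y$ and every $\rho \in R$ the set $S(\calE, z, (\rho^r, \rho))$ admits an affine-in-$r$ description $\{\rho^{b_j(\rho,r)}\}$ on some $[0,a]$, and such that $f'(\rho, z) = -\ell$ on an open dense $U(\rho) \subseteq Z$ for a single integer $\ell$ independent of $\rho$. Inspecting the proof of Proposition~\ref{P:div subhar1} in the indecomposable case shows that $\ell \in \{0, d\}$, and that for $z \in U(\rho)$ every break of $\calE$ has the common slope $c \in \{0, -1\}$ at $r = 0$: slope $0$ when $\del_2$ is eventually dominant on $\calE$, slope $-1$ when only $\del_1$ is eventually dominant. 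Combining this with Lemma~\ref{L:uniform rel}, which identifies $b_j(\rho, r)$ with the intrinsic break $b_j(\calE, z, x, r)$ on $R \cap (\epsilon(z), 1)$, and intersecting $U(\rho)$ over a sequence of $\rho \in R$ converging to $1$, yields the second assertion of the corollary.

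For the uniform $\epsilon$, fix $z$; by the above analysis, for $\rho \in R \cap (\epsilon(z), 1)$ we have $S(\calE, z, (\rho^r, \rho)) = \{\rho^{b_j(\calE, z, x, 0) + cr}\}$ with a single slope $c \in \{0, -1\}$ common to all $d$ breaks. Pick $\rho_0 < \rho_1$ in this intersection; Lemma~\ref{L:div subhar1 uniform} immediately extends the formula to all $\rho \in [\rho_0, 1)$. Next choose $\rho_0' \in R$ with $\rho_0' < \rho_0$: the affine description of Proposition~\ref{P:div subhar1}(a) at $\rho_0'$ must, by the continuity and transintegral polyhedrality of $F_j$ from Theorem~\ref{T:polyannuli convex2} (which rules out any jump in constant term or slope as $\rho$ moves through $R$), agree with the extended formula at $\rho_0$; applying Lemma~\ref{L:div subhar1 uniform} with the pair $\rho_0', \rho_1$ then extends to $[\rho_0', 1)$. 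Since $R'$ has only discrete limit points inside $(\epsilon_0, 1)$, finitely many iterations cover all of $(\epsilon_0, 1)$, producing a uniform $\epsilon$ independent of $z$. The main technical obstacle I foresee is precisely this coefficient-matching step across $\rho \in R$, particularly through the limit points of $R'$, where I will have to combine the convexity and polyhedrality from Theorem~\ref{T:polyannuli convex2} with the $\rho$-independence of the intrinsic breaks $b_j(\calE, z, x, r)$ to rule out any discontinuous jump in the affine coefficients.
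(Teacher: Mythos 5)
Your approach uses the correct ingredients (Proposition~\ref{P:div subhar1}, Lemma~\ref{L:div subhar1 uniform}, and a reduction to the uniform-break case), and the broad strategy matches the paper's. However, the iteration step you introduce to push the threshold $\rho_0$ downward is both unnecessary and not fully justified, and it is precisely where your ``main technical obstacle'' lives.

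The cleaner argument, which is what the paper does, is to pick $\rho_0 < \rho_1$ in $R$ \emph{once and for all}, independent of $z$. Proposition~\ref{P:div subhar1}(a) and (b) (together with its proof, which in the indecomposable case shows the constant term is the common break $b$ and the slope is $-\ell/d \in \{0,-1\}$) already furnish, for every $z$ in the open dense set $U(\rho_0) \cap U(\rho_1)$, exactly the hypothesis of Lemma~\ref{L:div subhar1 uniform} with these fixed $\rho_0, \rho_1$. The conclusion of that lemma then gives $\epsilon = \rho_0$ uniformly for all such $z$, and $Z \setminus \bigl(U(\rho_0) \cap U(\rho_1)\bigr)$ is the finite exceptional set. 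There is no need to first invoke Lemma~\ref{L:uniform rel} (which is what drags the $z$-dependent $\epsilon(z)$ into your argument) and then try to extend below it.

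As for the gap itself: your coefficient-matching claim, that the affine description from Proposition~\ref{P:div subhar1}(a) at $\rho_0' < \rho_0$ must have the same slope as the extended formula on $[\rho_0, 1)$ ``by continuity and transintegral polyhedrality,'' is not a consequence of Theorem~\ref{T:polyannuli convex2} alone. A convex polyhedral function of two variables can certainly have a right directional derivative in the $r$-direction at $r=0$ that jumps as $\rho$ varies; polyhedrality gives finitely many linear pieces, not agreement of adjacent slopes. The slope at $\rho_0'$ is controlled by $m_{1,j}(\rho_0', z)$ and $m_{2,j}(\rho_0', z)$ in the proof of Proposition~\ref{P:div subhar1}, which are only shown to take the expected values for $z \in U(\rho_0')$ --- and a priori $z$ need not lie in $U(\rho_0')$ if you chose $z$ first. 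So the matching you want is exactly the content of the hypothesis of Lemma~\ref{L:div subhar1 uniform}, and the paper arranges it by fixing $\rho_0, \rho_1$ first and restricting $z$ afterward, rather than the other way around.
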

\begin{proof}
By the proof of Proposition~\ref{P:div subhar1}, the hypothesis of
Lemma~\ref{L:div subhar1 uniform} holds for all but finitely many
$z \in Z \cap Y$. The assertion is then clear from the proof
of Proposition~\ref{P:div subhar1}.
\end{proof}

\begin{theorem} \label{T:div subhar}
Under Hypothesis~\ref{H:divisorial3}, 
we have the following.
\begin{enumerate}
\item[(a)]
For each $z \in Z \cap Y$, the functions
$b_j(\calE,z,x,r)$ for $j=1,\dots,d$
and $\Swan_Z(\calE,z,x,r)$ are affine in a neighborhood
of $r=0$.
\item[(b)]
Let $\Swan_Z'(\calE,z)$ be the right slope of $\Swan_Z(\calE,z,x,r)$ at $r=0$.
Then there exists $\ell = \ell(\calE,Z) \in \{0, 1,\dots,d\}$ such that
$\Swan_Z'(\calE,z) = -\ell$ for all but finitely many $z \in Z \cap Y$.
\item[(c)]
Assume that $Z \subset Y$.
With notation as in (b), we have
\begin{equation} \label{eq:div subhar}
\sum_{z \in Z} (\Swan_Z'(\calE,z) + \ell) \geq (2-2g(Z))\ell
- Z^2 \Swan(\calE,Z),
\end{equation}
where $g(Z)$ denotes the genus of $Z$, and $Z^2$ denotes the
self-intersection of $Z$ on $\Xbar$.
\end{enumerate}
\end{theorem}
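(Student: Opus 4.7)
Parts (a) and (b) reduce to results already in hand. For (a), Lemma \ref{L:uniform rel} (a consequence of Theorem \ref{T:convex variation3}) shows that $S(\calE,z,(\rho^r,\rho)) = \{\rho^{b_j(\calE,z,x,r)}\}$ with each $b_j$ affine in $r$ near $0$, and $\Swan_Z = \sum_j b_j$ inherits affinity. For (b), realize $\calE$ as a $\nabla$-module on a fringed relative annulus over $]U[_K$ for some open dense $U \subset Z \cap Y$ (Definition \ref{D:restrict to annulus}), and apply Proposition \ref{P:div subhar1}(b). Lemma \ref{L:uniform rel} identifies the function $f(\rho,z,r)$ of that proposition with $\Swan_Z(\calE,z,x,r)$, so $f'(\rho,z) = \Swan_Z'(\calE,z)$ and the desired $\ell = \ell(\calE,Z)$ is precisely the one produced by the proposition. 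Since $Z$ is an irreducible curve and $Z \cap Y$ is open dense in $Z$, the exceptional locus is finite.

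For part (c), the hypothesis $Z \subset Y$ forces $Z \cap Y = Z$, a smooth proper curve, so Proposition \ref{P:div subhar1}(c) is available and yields
\[
\sum_{z \in Z}(f'(\rho,z) + \ell) \geq (2-2g(Z))\ell.
\]
This is \eqref{eq:div subhar} up to the missing correction $-Z^2 \Swan(\calE,Z)$. The geometric reason for the discrepancy is that the proposition is stated for a relative annulus $]Z[_P \times A_K[0,1)$ with a global annulus coordinate $t$, whereas in the surface setting the tube of $Z$ in a formal lift of $\Xbar$ is a formal line bundle corresponding to the normal bundle $N_{Z/\Xbar}$, whose degree equals $Z^2$. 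When $Z^2 \neq 0$ there is no global trivialization of the transverse coordinate, and a global computation must pick up a cocycle correction governed by $N_{Z/\Xbar}$.

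The plan is to redo the argument of Proposition \ref{P:div subhar1}(c) on a fixed formal model of a tubular neighborhood of $Z$ in $\Xbar$, keeping explicit track of the transition functions for the normal bundle. Cover $Z$ by affine opens $Z_i$ carrying local transverse parameters $t_i$, with transition units $u_{ij} = t_i/t_j$ whose \v{C}ech cocycle represents $N_{Z/\Xbar}$ and has total degree $Z^2$. Running the local degree identity on each $Z_i$ and gluing, the defect between the naive bound and the true global sum should be governed by the cocycle degree weighted by the breaks: each break $b_j$ of $\calE$ along $Z$ contributes an additional $-b_j Z^2$ beyond the local count, and summing over $j=1,\dots,d$ yields the required correction $-Z^2 \Swan(\calE,Z)$. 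The main obstacle is making this gluing rigorous within the twisted-polynomial framework of Proposition \ref{P:div subhar1}: in particular, the rotation $u \mapsto u+t$ in that proof presupposes a global transverse coordinate, which is obstructed by a nontrivial $N_{Z/\Xbar}$. A possibly cleaner alternative is to pass through a finite generically \'etale cover $Z' \to Z$ trivializing $N_{Z/\Xbar}$, apply Proposition \ref{P:div subhar1}(c) upstairs, and descend via a pushforward argument analogous to that of Theorem \ref{T:crew functor2}, with the self-intersection discrepancy between $Z'$ and $Z$ supplying the correction.
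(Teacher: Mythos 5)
Parts (a) and (b) match the paper's argument exactly (deduce (a) from Lemma~\ref{L:uniform rel} and (b) from Corollary~\ref{C:uniform rel1}), so those are fine.

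For (c), you correctly diagnose the source of the discrepancy — Proposition~\ref{P:div subhar1}(c) is stated with a single annulus coordinate $t$, whereas $\Swan_Z(\calE,z,x,r)$ is normalized with respect to a local parameter $t_z$ that varies with $z$, and the defect is governed by the normal bundle degree $Z^2$. But your two proposed routes (a \v{C}ech-cocycle accounting inside the twisted-polynomial argument, or a finite cover trivializing $N_{Z/\Xbar}$ followed by descent) are both left as plans, and you yourself flag that the gluing is obstructed by the rotation $u \mapsto u+t$; so what you have is an outline, not a proof.

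The paper's route is substantially simpler and avoids both obstacles. Since $\Xbar$ is projective, one can fix a single rational function $t \in k(\Xbar)$ with $v_0(t) = 1$; it need not be a local parameter at each $z$, but the failure is quantified by $c_z := \ord_z\bigl((t/t_z)\restriction_Z\bigr)$, and intersection theory gives $\sum_z c_z = ((D-Z)\cdot Z) = -Z^2$ where $D = \mathrm{div}(t)$ (using $D \cdot Z = 0$). Computing the Swan conductor along $v_s(z;x_z)$ in both normalizations and using $v_s(z;x_z) = v_r(z;x_z)$ with $r = s/(1+sc_z)$ gives the exact relation
\[
\Swan_Z(\calE,z,x,s) = (1+sc_z)\, f(z,r), \qquad r = \frac{s}{1+sc_z},
\]
where $f(z,r)$ is the quantity from Proposition~\ref{P:div subhar1} (constant in $\rho$ near $1$ by Corollary~\ref{C:uniform rel1}). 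Differentiating at $s=0$ yields $\Swan'_Z(\calE,z) = c_z \Swan(\calE,Z) + f'(z)$, and summing over $z$ and invoking Proposition~\ref{P:div subhar1}(c) gives \eqref{eq:div subhar} immediately. In short: rather than trivializing $N_{Z/\Xbar}$ (by cover or by cocycle bookkeeping), fix one global $t$ and let the intersection-theoretic identity $\sum_z c_z = -Z^2$ carry the entire correction. Your intuition about where $-Z^2\Swan(\calE,Z)$ comes from is right; you just need this normalization-comparison to make it precise, and that closes the gap without any new gluing machinery.
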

\begin{proof}
We deduce (a) from Lemma~\ref{L:uniform rel},
and (b) from Corollary~\ref{C:uniform rel1}.
For (c), we must account for the fact that we cannot necessarily
choose the local parameter $t$ uniformly for all $z \in Z$. Pick 
$t \in k(\overline{X})$ with $v_0(t) = 1$,
and let $D$ denote the principal divisor defined by $t$; 
then $D \cdot Z = 0$, so $(D -Z)\cdot Z = - Z^2$. 

For $z \in Z$, let $t_z$ be a local parameter for $Z$ at $z$,
and let $c_z$ be the order of vanishing at $z$ of the restriction
of $t/t_z$ to $Z$. Then $c_z$ is equal to the local intersection
multiplicity $((D-Z)\cdot Z)_z$, so $\sum_{z \in Z} c_z = -Z^2$.
Let $x_z \in k(\overline{X})$ cut out a divisor with a single component
at $z$, which is smooth of multiplicity 1 and meets $Z$ transversely.
For $s$ close to 0, the valuation $v_s(z; x_z)$ corresponds to the
divisor $x_z \sim t_z^s$, or $x_z \sim t^r$ with 
$r = s/(1 + sc_z)$. (Again, the notation $\sim$ is used as in
Example~\ref{ex:Dwork}.)

Define $f(\rho,z,r)$ as in Proposition~\ref{P:div subhar1};
by Corollary~\ref{C:uniform rel1}, it is independent of $\rho$ for
$r$ in some neighborhood of $0$ and $\rho$ in some neighborhood of 1,
so we may call the resulting value $f(z,r)$. This quantity is the
Swan conductor along
$x_z \sim t^r$ normalized with respect to $t$; renormalizing with respect
to $t_z$, we obtain
\[
\Swan_Z(\calE,z,x,s) = f(z,r) \frac{v_s(x;z)(t)}{v_s(x;z)(t_z)}
= \frac{s}{r} f(z,r) = (1+sc_z) f(z,r).
\]
Differentiating with respect to $s$ at $r=s=0$ yields
\[
\Swan'_Z(\calE,z) = c_z f(z,0) + f'(z).
\]
We now deduce (c) by summing over $z \in Z$ and invoking 
Proposition~\ref{P:div subhar1}(c).
\end{proof}

\begin{example} \label{exa:subhar}
Here is a typical example where Theorem~\ref{T:div subhar} holds 
with $\ell \neq 0$:
take $Z$ to be the $x$-axis in the $x,t$-plane
$\AAA^2_k \subset \PP^2_k$, take $X = \AAA^2_k \setminus Z$,
and take $\calE$ to be the Dwork isocrystal $\calL_{x t^{-p}}$.
\end{example}

\begin{remark}
As is apparent in the proof of Theorem~\ref{T:div subhar},
the self-intersection number in \eqref{eq:div subhar} is a side effect of
normalizing with respect to a different parameter at each point of $Z$;
it drops out if one normalizes everything with respect to a single function.
\end{remark}

\begin{remark}
It is reasonable to ask whether equality necessarily holds in 
\eqref{eq:div subhar} as long as the ramification breaks along $Z$ are all
nonzero. Unfortunately, the proof of Proposition~\ref{P:div subhar1} does
not suffice to establish this; what is missing is a proof that
if $\frac{\del}{\del t}$ and $\frac{\del}{\del x}$ are both dominant
on $\calE_\rho$, then $\frac{\del}{\del t}$ is dominant 
on $\calE_{(\rho^r,\rho)}$ for $r>0$ small. 
\end{remark}

\subsection{Monotonicity}

We now use some refined results on $p$-adic differential modules
on discs, to gain some further control over differential
Swan conductors. In the original version of this paper, this was done
using results on rigid cohomology to imitate what
one does in the $\ell$-adic setting
(compare Laumon's proof of the semicontinuity theorem
\cite{laumon}); that method was limited to fully
overconvergent $F$-isocrystals, with $K$ discrete.

\begin{defn}
Under Hypothesis~\ref{H:divisorial3}, for $i \in \{1,\dots,d\}$ such
that either $i=d$ or $b_i(\calE,Z) > b_{i+1}(\calE,Z)$, let $\ell_i(\calE,Z)$
be the sum of the ranks of the components of the break decomposition
of $\calE$ contributing to $b_1(\calE,Z) + \cdots + b_i(\calE,Z)$
on which $\del_2$ is not eventually dominant. In particular,
$\ell_d(\calE,Z) = \ell(\calE,Z)$.
\end{defn}

\begin{theorem} \label{T:monoton2}
Assume Hypothesis~\ref{H:divisorial3}.
Suppose that $z \in Z \cap Y$ 
is a smooth point of $Z \cup (\Xbar \setminus X)$.
Let $b'_i(\calE,z)$ be the right slope of $b_i(\calE,z,x,r)$ at $r=0$.
Then for $i \in \{1,\dots,d\}$ such
that either $i=d$ or $b_i(\calE,Z) > b_{i+1}(\calE,Z)$,
we have $b'_1(\calE,z) + \cdots + b'_i(\calE,z) + \ell_i(\calE,Z) \leq 0$,
with equality for all but finitely many $z$.
\end{theorem}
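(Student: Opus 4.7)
The plan is to combine the break decomposition with the raw calculation in Proposition~\ref{P:div subhar1} and the convexity results from Section~\ref{sec:polyannuli}. First, I would apply Theorem~\ref{T:break decomp2} to decompose $\calE$ on a fringed relative annulus over a sufficiently small dense open $U \subseteq Z \cap Y$ containing $z$. The hypothesis that either $i = d$ or $b_i(\calE,Z) > b_{i+1}(\calE,Z)$ ensures that the break components with break $\geq b_i$ assemble into a well-defined subobject $\calE_{\geq b_i}$ of rank exactly $i$. By continuity of scales (Theorem~\ref{T:polyannuli convex2}(a)) and the strict gap between the top $i$ and remaining breaks, for $r$ in some right neighborhood of $0$ and $\rho$ close to $1$, the top $i$ elements of $S(\calE,z,(\rho^r,\rho))$ coincide with the entire multiset $S(\calE_{\geq b_i},z,(\rho^r,\rho))$. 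Thus $b_1(\calE,z,x,r) + \cdots + b_i(\calE,z,x,r)$ equals $f(\calE_{\geq b_i},\rho,z,r)$ in the notation of Proposition~\ref{P:div subhar1}, and since $\ell_i(\calE,Z) = \ell(\calE_{\geq b_i},Z)$, the claim reduces to proving it for $\calE' = \calE_{\geq b_i}$ with $i = \rank \calE'$.

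Second, the equality for generic $z$ follows directly from Proposition~\ref{P:div subhar1}(b): that proposition supplies a dense open $U \subset Z$ on which $f'(\rho,z) = -\ell(\calE',Z)$, so $b_1'(\calE',z) + \cdots + b_i'(\calE',z) + \ell_i(\calE,Z) = 0$ there. This handles the ``equality for all but finitely many $z$'' assertion.

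Third, for the pointwise inequality at an arbitrary smooth point $z$ of $Z \cup (\Xbar \setminus X)$, I would use that the smoothness hypothesis means that locally around $z$ the only boundary component is $Z$. Choosing $x \in \calO_{\Xbar,z}$ whose zero locus is smooth of multiplicity $1$ and meets $Z$ transversely at $z$ places us in the local bidisc setup of Section~\ref{sec:polyannuli}. Applying Theorem~\ref{T:convex variation3} to $\calE'$ on this bidisc, the partial sum $B_i(\calE',r)$ is convex and integral polyhedral on the simplex $T = \{(r_1,r_2): r_1+r_2=1\}$, with value $\sum_{j \leq i} b_j(\calE',Z)$ at the vertex $(0,1)$ and value $0$ at the vertex $(1,0)$ (since $\calE'$ extends across $\{x=0\}$ at $z$, overconvergence being only along $Z$). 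The right slope of $B_i(\calE',r)$ at $(0,1)$ in the direction of $(1,-1)$ agrees, up to the normalization converting between ``with respect to $t$'' and ``with respect to $xt$,'' with $b_1'(\calE',z) + \cdots + b_i'(\calE',z)$; by convexity this right slope is bounded above by the corresponding secant slope, and combining with Proposition~\ref{P:div subhar1}(b) forces $\leq -\ell_i(\calE,Z)$ at every smooth point.

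The main obstacle will be tracking signs and normalizations carefully enough to show that the convexity-based upper bound is \emph{sharp} at generic $z$ and strict at exceptional $z$, matching the claimed inequality. In particular, one must reduce further to indecomposable uniform-break pieces (using Proposition~\ref{P:hard case} to ensure that each has a unique eventually dominant derivation, so that $\ell$ attaches cleanly to either $0$ or the full rank of the piece), and then verify that the convexity estimate together with the indecomposable-by-indecomposable contributions computed in the proof of Proposition~\ref{P:div subhar1} assemble to exactly $-\ell_i(\calE,Z)$. This is the step where the refined polyhedral-convexity machinery of \cite{kedlaya-xiao} cited in Remark~\ref{R:joint} is essential.
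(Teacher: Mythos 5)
Your reduction to the case $i=d$ via the break decomposition is already on shaky ground: Theorem~\ref{T:break decomp2} produces the decomposition only after possibly shrinking $U$, and nothing forces the shrunk $U$ to still contain the chosen $z$. Indeed Theorem~\ref{T:swan drop4} shows that extendability of the break decomposition across $z$ is \emph{equivalent} to equality in the inequality you are trying to prove, so you cannot assume it. (The paper sidesteps this by tensoring $\calE$ with the rank-$m$ direct sum of Dwork isocrystals $\calL_{t^{c/m}}$ over $m$-th roots of $t^c$, which artificially raises the low breaks above a cutoff without reference to a decomposition at $z$.)

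The more serious gap is in the convexity step. If you set up the normalizations carefully, the secant-slope bound delivers strictly less than what is claimed. Write $g(s) = B_i(\calE,(s,1-s))$ for the quantity normalized with respect to $xt$ as in Theorem~\ref{T:convex variation}, and $h(r)$ for the corresponding partial sum $b_1(\calE,z,x,r)+\cdots+b_i(\calE,z,x,r)$ normalized with respect to $t$. The value group of $v_r(z;x)$ for $r=a/b$ in lowest terms is $\frac{1}{b}\ZZ$, $v_r(t)=1$ has index $b$, and $v_r(xt)=r+1=(a+b)/b$ has index $a+b$; so $h(r) = (r+1)\,g(r/(r+1))$. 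Differentiating at $r=0$ gives $h'(0)=g(0)+g'(0)$. Since $\calE$ extends across $\{x=0\}$, $g(1)=0$, and convexity of $g$ on $[0,1]$ gives $g'(0)\leq g(1)-g(0)=-g(0)$, hence $h'(0)\leq 0$. This is the assertion \emph{only in the case $\ell_i(\calE,Z)=0$}; when $\ell_i(\calE,Z)>0$ (the interesting case, already realized by Example~\ref{exa:subhar}) you need $h'(0)\leq -\ell_i(\calE,Z)$, and the convexity estimate has no access to this. Your appeal to Proposition~\ref{P:div subhar1}(b) only supplies the generic value of the slope; the convexity of $h$ in $r$ at fixed $z$ says nothing comparing the slope at one $z$ to the slope at another $z$, so it cannot promote the generic value to a pointwise upper bound.

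The missing ingredient is precisely what the paper's proof supplies by a different route: passing to $K_1 = \widehat{K_\rho(u)}$, applying the rotation $f^*(x)=x$, $f^*(t)=t(1+ux)$ so that the new $\del_1=\del/\del x$ sees both the original $\del/\del x$ and $\rho^r$ times the original $\del/\del t$, and then invoking the slope inequality of \cite[Theorem~11.3.2(d)]{kedlaya-course} for scale functions of a $\nabla$-module on a disc. It is this disc-slope inequality, not convexity of the polyannulus Newton polygon, that produces the summand $\ell(\calF_j)$ component by component (via the case distinction on whether $\del_1$ is dominant on $\calF_j$). Your closing paragraph correctly flags that the convexity estimate by itself does not assemble to $-\ell_i$, but the fix is not a sharper bookkeeping of the same estimate; a genuinely different mechanism is needed.
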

The proof is again by rotation, but this time in the opposite direction
from the arguments of \cite{kedlaya-swan1}:
we use a result about $\del_1$ to prove something about $\del_2$.
\begin{proof}
The equality for all but finitely many $z$ follows from
Corollary~\ref{C:uniform rel1},
so it suffices to check the inequality.
We first treat the case $i = d$.

Take $x,t$ as in Definition~\ref{D:local variation3}.
Because $z$
is a smooth point of $\overline{X} \setminus X$,
we may restrict $\calE$ to a space of the form
$A_{K,x}[0,1) \times A_{K,t}(\epsilon,1)$
for some $\epsilon \in (0,1)$.
By Lemma~\ref{L:uniform rel}, 
we can choose $a>0$ and $\epsilon \in (0,1)$
so that for $r \in (0,a)$
and $\rho \in (\epsilon,1)$,
\[
S(\calE,z,(\rho^r,\rho)) = \{\rho^{b_1(\calE,z,x,r)},
\dots, \rho^{b_d(\calE,z,x,r)}\}.
\]
By Theorem~\ref{T:convex variation}, we can choose $a$ so that
each of $b_1(\calE,z,x,r), \dots, b_d(\calE,z,x,r)$ is affine in $r$
for $r \in [0,a]$.

Pick any $\rho \in (\epsilon,1)$, and let $K_\rho$ be the completion
of $K(t)$ for the $\rho$-Gauss norm. We may then restrict $\calE$
to obtain a $\nabla$-module $\calF$ on 
$A_{K_\rho,x}[0,1)$.
As in the proof of \cite[Theorem~2.4.4]{kedlaya-xiao}, 
for a suitable choice of $a$,
we may decompose $\calF = \oplus_j \calF_j$ 
over $A_{K_\rho,x}(T)$ for $T = \{\rho^r: r \in (0,a)\}$,
so that for 
each $h \in \{1,2\}$, either $\del_h$ is not dominant on
$(\calF_j)_{\rho^r}$ for each $r \in (0,a)$, or $\del_h$ is dominant on
$(\calF_j)_{\rho^r}$ for each $r \in (0,a)$
with scale multiset consisting of a single element. (We abbreviate this 
by saying that $\del_h$ is or is not dominant on $\calF_j$.)

Write the scale of $(\calF_j)_{\rho^r}$ as $\rho^{-\alpha r - \beta}$,
where we write $\alpha = \alpha(\calF_j)$ and $\beta = \beta(\calF_j)$
if it is necessary to disambiguate.
Then
\[
\sum_j (\alpha(\calF_j) r + \beta(\calF_j)) \rank(\calF_j) = \Swan_Z(\calE,z,x,r)
\]
and so
\begin{equation} \label{eq:monotonicity3}
\sum_j \alpha(\calF_j) \rank(\calF_j) = \Swan'_Z(\calE,z).
\end{equation}
Put $\ell(\calF_j) = 0$ if the limit of the scale of $\del_2$ on
$(\calF_j)_{\rho^r}$ as $r \to 0^+$ equals $\rho^{-\beta}$,
and $\ell(\calF_j) = \rank(\calF_j)$ otherwise. Then
\begin{equation} \label{eq:monotonicity4}
\sum_j \ell(\calF_j) = \ell(\calE,Z).
\end{equation}

Let $K_1$ be the completion of $K_{\rho}(u)$ for the $1$-Gauss 
norm. Let $f: A_{K_1}[0,1) \to A_{K_\rho}[0,1)$ be the 
$K_0$-linear map of locally $G$-ringed spaces acting on global
sections via $f^*(x) = x$, $f^*(t) = t(1+ux)$.
This has the effect
\[
dx \mapsto dx, \qquad
dt \mapsto (1+ux)\,dt + ut\,dx.
\]
Writing $\del'_1, \del'_2$ for the actions of $\frac{\partial}{\partial x}$,
$\frac{\partial}{\partial t}$ before pulling back, 
the actions of $\del_1, \del_2$ are given by
\[
\del'_1 + ut \del'_2, \qquad (1+ux) \del'_2.
\]
In particular, the scale of
$\del_1$ on $(f^* \calF_j)_{\rho^r}$
is equal to the greater of the following quantities: 
the scale of $\del'_1$ on $(\calF_j)_{\rho^r}$,
and $\rho^{r}$ times the scale of $\del'_2$ on $(\calF_j)_{\rho^r}$.

Write the scale of $\del_1$ on $(f^* \calF_j)_{\rho^r}$ as $\rho^{-g(r)}$.
Since $\calF$ extends to an affinoid space containing the annulus
$A_{K_\rho,x}(T)$, the proof of 
\cite[Theorem~11.3.2]{kedlaya-course} shows that
each $g(r)$ extends continuously to $[0, a)$, and is affine in
a neighborhood of $r=0$ (as in Theorem~\ref{T:polyannuli convex2}).
Let $m = m(\calF_j)$ be the right slope of $g$ at $r=0$.
From the calculation of the scale of $\del_1$ on $(f^* \calF_j)_{\rho^r}$
above, we have the following.
\begin{itemize}
\item
If $\del_1$ is dominant on $\calF_j$, then
$g(r) = \alpha r + \beta$, so $m = \alpha$.
\item
If $\del_1$ is not dominant on $\calF_j$, then
$\alpha r + \beta > g(r) \geq (\alpha-1) r + \beta$,
so $\alpha > m \geq \alpha-1$.
\end{itemize}

We say that $\calF_j$ is \emph{negligible} if $\alpha = \beta = 0$.
By \cite[Theorem~11.3.2(d)]{kedlaya-course} applied 
on $A_{K_1}[0, 1-\epsilon]$ for some small $\epsilon > 0$, 
we have
\begin{equation} \label{eq:monotonicity1}
\sum_j (m(\calF_j) + 1) \rank(\calF_j) \leq 0,
\end{equation}
provided we take the sum over those $j$ for which $\calF_j$ is not negligible.
For each such $j$, we have the following.
\begin{itemize}
\item
If $\ell(\calF_j) = 0$, then $m \geq \alpha - 1$ whether or not
$\del_1$ is dominant on $\calF_j$, so $m + 1 \geq \alpha$.
\item
If $\ell(\calF_j) = \rank(\calF_j)$, then $\del_2$ cannot be dominant on
$\calF_j$ for $r>0$ small, so $\del_1$ must be dominant on $\calF_j$.
We thus must have $m = \alpha$.
\end{itemize}
In both cases, we have 
\[
(m(\calF_j) + 1) \rank(\calF_j) \geq \alpha(\calF_j) \rank(\calF_j) + \ell(\calF_j),
\]
so by \eqref{eq:monotonicity1} we have
\begin{equation} \label{eq:monotonicity2}
\sum_j (\alpha(\calF_j) \rank(\calF_j) + \ell(\calF_j)) \leq 0
\end{equation}
provided that we only sum over $j$ for which $\calF_j$ is not negligible.
However, the left side of \eqref{eq:monotonicity2} does not change if we
include summands for which $\calF_j$ is negligible (as those have
$\alpha(\calF_j) = \ell(\calF_j) = 0$), so \eqref{eq:monotonicity2}
holds even if we sum over all $j$. By \eqref{eq:monotonicity3}
and \eqref{eq:monotonicity4}, this yields the desired inequality in the case
$i=d$.

We now treat the case where $i <d $ but $b_i(\calE,Z) > b_{i+1}(\calE,Z)$.
Pick a rational number $c/m \in (b_{i+1}(\calE,Z), b_i(\calE,Z))$ with 
denominator
$m$ coprime to $p$. Let $\calF$ be the direct 
sum of the Dwork isocrystals
$\calL_{t^{c/m}}$ 
(in the sense of Definition~\ref{D:dwork iso})
for $t^{c/m}$ running over all of the $m$-th roots of $t^c$.
This isocrystal is initially only defined on an $m$-fold cover of $X$,
but it descends to an overconvergent
isocrystal of rank $m$ such that for $r$ near $0$,
\[
b_1(\calF,z,x,r) = \cdots = b_m(\calF,z,x,r) = \frac{c}{m}
\]
by \cite[Example~3.5.10]{kedlaya-swan1}.
Consequently,
\[
b_{(j-1)m + 1}(\calE \otimes \calF, z,x,r) =
\cdots = b_{jm}(\calE \otimes \calF,z,x,r) = 
\begin{cases}
b_i(\calE,z,x,r) & j \leq i \\
\frac{c}{m} & j > i.
\end{cases}
\]
Thus we may obtain the desired result for $\calE$ by applying the previously
shown case for $\calE \otimes \calF$.
\end{proof}

Equality in Theorem~\ref{T:monoton2} has a special meaning.
\begin{theorem} \label{T:swan drop4}
With notation as in Theorem~\ref{T:monoton2}, suppose that
for each $i \in \{1,\dots,d\}$ such
that $b_i(\calE,Z) > b_{i+1}(\calE,Z)$,
we have $b'_1(\calE,z) + \cdots + b'_i(\calE,z) + \ell_i(\calE,Z) = 0$.
Then the break decomposition of $\calE$ along $Z$ extends over $z$.
\end{theorem}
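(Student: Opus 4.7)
The plan is to verify the criterion of Lemma~\ref{L:extend break}: for each $\rho \in (\epsilon,1)$ sufficiently close to $1$, one must produce a decomposition of $\calE$ over $K_\rho$ near $z$ whose base change to $L_\rho$ recovers the break decomposition of the generic fibre $\calF$. By splitting off the break-isotypic components one at a time and inducting on the number of distinct breaks, this reduces to the following problem: given an index $i$ with $b_i(\calE,Z) > b_{i+1}(\calE,Z)$, exhibit a projector on $\calE^\dual \otimes \calE$ near $z$ over $K_\rho$ that cuts off the sum of break components of values $\geq b_i(\calE,Z)$. The compatibility with the $L_\rho$-projector coming from the generic-fibre break decomposition will then follow by tracking the construction and can be combined with it via Lemma~\ref{L:proj intersect}.

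Next I would revisit the setup of the proof of Theorem~\ref{T:monoton2}: restrict $\calE$ to a bidisc around $z$, base change to $K_\rho$, and pull back along the rotation $f$ with $f^*(t) = t(1+ux)$ to obtain $f^*\calF$ on $A_{K_1,x}[0,1)$, together with a decomposition $f^*\calF = \bigoplus_j f^*\calF_j$ over the subannulus $A_{K_1,x}(T)$ into pieces each with a single eventually dominant derivation and scales $\rho^{-\alpha_j r - \beta_j}$ affine in $r$. The chain of inequalities $(m_j+1)\rank(\calF_j) \geq \alpha_j \rank(\calF_j) + \ell(\calF_j)$, summed over non-negligible $\calF_j$, gave the bound $\Swan'_Z(\calE,z) + \ell(\calE,Z) \leq 0$ in the case $i=d$. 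The equality hypothesis at $i=d$ forces termwise equality throughout this chain; the hypothesis at smaller break jumps $i$ forces the analogous termwise equality in the truncated sums over those $\calF_j$ with $\beta_j \geq b_i(\calE,Z)$ (this grouping being legitimate because each $\beta_j$ is one of the breaks of $\calF$ by Theorem~\ref{T:convex variation}).

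The main obstacle is promoting the annular decomposition to a decomposition on the full disc $A_{K_1,x}[0,1)$. I expect that termwise saturation of the Euler-characteristic bound \eqref{eq:monotonicity1} coming from \cite[Theorem~11.3.2]{kedlaya-course} forces each relevant $f^*\calF_j$ to extend as a $\nabla$-submodule across the origin; negligible summands have $\alpha_j = \beta_j = 0$ and may be absorbed into the complementary piece. Grouping the extended pieces by the threshold $\beta_j \geq b_i(\calE,Z)$ then yields a projector on $f^*\calF^\dual \otimes f^*\calF$ over the full disc. Galois descent from $K_1$ to $K_\rho$, in the style of the closing step of the proof of Proposition~\ref{P:hard case}, recovers a projector over $K_\rho$; a final application of Lemma~\ref{L:proj intersect} and Lemma~\ref{L:extend break} then completes the induction step.
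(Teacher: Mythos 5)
Your proposal takes essentially the same route as the paper's proof. The step you hedge on (``I expect that\ldots'') --- that equality in \eqref{eq:monotonicity1} forces the annular decomposition of $f^*\calF$ to extend over the full disc --- is exactly \cite[Theorem~12.2.2]{kedlaya-course}, which the paper cites directly, producing all the projectors at once and thereby bypassing your induction on the number of distinct breaks; the paper then finishes via Lemma~\ref{L:extend break} as in your first and last paragraphs. One small imprecision: the passage from $K_1$ back to $K_\rho$ is not Galois descent (the extension $K_1/K_\rho$ is a completed purely transcendental extension, not finite Galois), but the intended point---that a horizontal projector over the full $K_1$-disc, which over the annulus agrees with the pullback of one defined over $K_\rho$, must itself come from $K_\rho$---is what the paper's terse ``it descends to a decomposition of $\calF$ itself'' refers to.
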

\begin{proof}
Set notation as in the proof of Theorem~\ref{T:monoton2};
then we must have equality in \eqref{eq:monotonicity1}
for each $i$ such that $b_i(\calE,Z) > b_{i+1}(\calE,Z)$.
By \cite[Theorem~12.2.2]{kedlaya-course},
$f^* \calF$ 
admits a direct sum decomposition over all of $A_{K_\rho}[0,1)$
such that over $A_{K_\rho}(T)$, the $\calF_j$ 
which are grouped into the same summand 
all have the same value of $\beta(\calF_j)$.
Over $A_{K_\rho}(T)$, this decomposition coincides
with the decomposition obtained by pulling back
the break decomposition of $\calE$; in particular, it descends
to a decomposition of $\calF$ itself.

The projectors onto the summands in this decomposition of $\calF$
are horizontal sections of $\calF^\dual \otimes \calF$.
Since these match the projectors over $A_{K_\rho}(T)$
defined by the break decomposition, 
we may apply Lemma~\ref{L:extend break}
to deduce that the break decomposition of $\calE$ along $Z$ extends over
$z$.
\end{proof}

\subsection{Turning points}

We propose a notion of \emph{turning points}, analogous to the corresponding
objects in the holomorphic setting.

\begin{hypothesis} \label{H:divisorial4}
Let $\Xbar$ be a smooth irreducible projective surface over $k$,
and let $K_{\overline{X}}$ denote a canonical divisor on $\Xbar$.
Let $D$ be a strict normal crossings divisor on $\Xbar$,
and put $X = \Xbar \setminus D$.
Let $\calE$ be an overconvergent isocrystal of rank $d$ on $X$.
\end{hypothesis}

\begin{defn} \label{D:hidden turning point}
Let $z$ be a nonsmooth point of $D$, and let 
$Z_1, Z_2$ be the components of $D$
containing $z$.
Let $t_1, t_2$ be local parameters for $Z_1, Z_2$ at $z$.
Define the functions $B_1(\calE,r), \dots, B_d(\calE,r)$ as in
Theorem~\ref{T:convex variation}; for $s \in [0,1]$, put
$f_i(s) = B_i(\calE,(1-s,s))$.
We say that $z$ is a \emph{hidden turning point} if $f_i(s)$
is \emph{not} affine in $s$ for some $i \in \{1,\dots,d\}$.
\end{defn}

\begin{prop} \label{P:hidden turning point}
In Definition~\ref{D:hidden turning point}, 
let $f_i'(0)$ denote the right slope of $f_i$ at $s=0$. Then
$f_i'(0) \leq f_i(1) - f_i(0)$,
with equality for all $i$
if and only if $z$ fails to be a hidden turning
point.
\end{prop}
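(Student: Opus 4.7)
The plan is to reduce the proposition to a one-variable statement about convex functions, using the convexity part of Theorem~\ref{T:convex variation}. Parametrizing the line segment $\{(1-s,s) : s \in [0,1]\}$ by $s$, we have $f_i(s) = B_i(\calE,(1-s,s))$, and Theorem~\ref{T:convex variation} asserts that $d!\, B_i(\calE,\cdot)$ (respectively $B_d(\calE,\cdot)$) is continuous, convex, and integral polyhedral on the simplex $T$. Since convexity is preserved by restriction to a line and by positive scaling, each $f_i$ is a continuous convex function on $[0,1]$, and so admits a right derivative $f_i'(0)$ at $s = 0$.

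Next I would invoke the following elementary fact about a continuous convex function $f$ on $[0,1]$: the secant slope
\[
\phi(s) = \frac{f(s) - f(0)}{s}
\]
is nondecreasing in $s \in (0,1]$ and satisfies $\lim_{s \to 0^+} \phi(s) = f'(0)$. Evaluating at $s = 1$ gives $f'(0) \leq \phi(1) = f(1) - f(0)$; applied to $f = f_i$, this is the desired inequality.

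For the equality clause, the same monotonicity shows that $f'(0) = f(1) - f(0)$ forces the nondecreasing function $\phi$ to have equal infimum and supremum on $(0,1]$, hence to be constant. This in turn forces $f(s) = f(0) + s f'(0)$, i.e., $f$ is affine; the converse is trivial. Applied to each $f_i$, this shows that $f_i'(0) = f_i(1) - f_i(0)$ holds for every $i \in \{1,\dots,d\}$ if and only if every $f_i$ is affine on $[0,1]$, which by Definition~\ref{D:hidden turning point} is precisely the condition that $z$ fails to be a hidden turning point. The whole argument is routine convex analysis, so no substantive obstacle arises; the content lies entirely in the invocation of Theorem~\ref{T:convex variation}.
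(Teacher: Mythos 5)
Your proof is correct and follows the same route as the paper, which simply observes that the claim is immediate from the convexity of $f_i$ guaranteed by Theorem~\ref{T:convex variation}. You have merely spelled out the standard convex-analysis argument (monotonicity of secant slopes) that the paper leaves implicit.
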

\begin{proof}
This is evident from the fact that $f_i$ is convex
(Theorem~\ref{T:convex variation}).
\end{proof}

\begin{defn}
Let $z$ be a smooth point of $D$, and let $Z$ be the component of $D$
containing $z$. By Theorem~\ref{T:monoton2},
for each $i \in \{1,\dots,d\}$ such
that either $i=d$ or $b_i(\calE,Z) > b_{i+1}(\calE,Z)$,
we have $b'_1(\calE,z) + \cdots + b'_i(\calE,z) + \ell_i(\calE,Z) \leq 0$.
We say $z$ is an 
\emph{exposed turning point} if this inequality is strict for at least one
$i$.
\end{defn}

It is natural to mention a variant of 
Theorem~\ref{T:div subhar} phrased in terms of intersection theory
rather than valuations.

\begin{defn}
For each component $Z$ of $D$, let $\Swan(\calE,Z)$ denote the differential
Swan conductor of $\calE$ along $Z$,
and define $\ell(\calE, Z)$ as in Theorem~\ref{T:div subhar}.
Define the \emph{Swan divisor} of $\calE$ on $\Xbar$ as the divisor
\[
\Swan(\calE) = \sum_{Z \in D} \Swan(\calE, Z) Z.
\] 
\end{defn}

\begin{lemma} \label{L:convert to intersections}
Under Hypothesis~\ref{H:divisorial4},
for each component $Z$ of $D$,
\[
Z \cdot (\Swan(\calE) + \ell(\calE, Z)(K_{\overline{X}} + D)) \geq 
(2g(Z)-2)\ell(\calE, Z)
+ Z^2 \Swan(\calE,Z) +
\sum_{z \in Z} (\Swan_Z'(\calE,z) + \ell(\calE, Z)).
\]
Moreover, equality holds
if $\calE$ has no turning points on $Z$.
\end{lemma}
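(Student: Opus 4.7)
The plan is to expand the intersection numbers via adjunction to reduce the claim to a bound on the sum $\sum_{z \in Z} (\Swan_Z'(\calE,z) + \ell(\calE,Z))$ in terms of the crossing-point intersection data, then bound each summand using Theorem~\ref{T:monoton2} at smooth points of $D$ and Proposition~\ref{P:hidden turning point} at the crossings.

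First, using the adjunction formula $Z \cdot K_{\overline{X}} = 2g(Z) - 2 - Z^2$ and $Z \cdot D = Z^2 + \sum_{Z' \neq Z} Z \cdot Z'$, the left-hand side of the claim equals
\[
Z^2 \Swan(\calE,Z) + \sum_{Z' \neq Z} \Swan(\calE,Z')(Z \cdot Z') + \ell(\calE,Z)(2g(Z) - 2) + \ell(\calE,Z) \sum_{Z' \neq Z} (Z \cdot Z').
\]
After cancelling the explicit terms $(2g(Z) - 2)\ell(\calE,Z) + Z^2 \Swan(\calE,Z)$ appearing on the right, the inequality reduces to
\[
\sum_{Z' \neq Z} (Z \cdot Z')(\Swan(\calE,Z') + \ell(\calE,Z)) \geq \sum_{z \in Z} (\Swan_Z'(\calE,z) + \ell(\calE,Z)).
\]
Since $D$ has strict normal crossings, each pair $Z \cap Z'$ consists of exactly $Z \cdot Z'$ transverse crossings.

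I would then split the right-hand sum according to whether $z$ lies on another component. For $z \in Z$ a smooth point of $D$ (i.e., $z$ is not on any $Z' \neq Z$), Theorem~\ref{T:monoton2} applied with $i = d$ gives $\Swan_Z'(\calE,z) + \ell(\calE,Z) \leq 0$, with equality exactly when $z$ is not an exposed turning point; hence these contributions are non-positive and vanish under the no-turning-points hypothesis. For $z \in Z \cap Z'$ a crossing point, I would invoke Proposition~\ref{P:hidden turning point} applied to the local parameters $t_1 = t$ along $Z$ and $t_2 = x$ along $Z'$ at $z$. The main computation is to identify $f_d(0)$, $f_d(1)$, and $f_d'(0)$ in terms of the Swan conductors appearing in the lemma: matching the valuation $v_r(z;x)$ of Definition~\ref{D:local variation3} with the valuation $v_{(1-s,s)}$ of Theorem~\ref{T:convex variation} via the change of variables $s = r/(1+r)$ (the two are equivalent as valuations, differing by a scaling factor of $1+r$), and tracking how the resulting value groups interact with the $t$- versus $tx$-normalizations, one finds $f_d(0) = \Swan(\calE,Z)$, $f_d(1) = \Swan(\calE,Z')$, and $f_d(s) = (1-s)\Swan_Z(\calE,z,x,s/(1-s))$; differentiating gives $f_d'(0) = \Swan_Z'(\calE,z) - \Swan(\calE,Z)$. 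Convexity of $f_d$ (Theorem~\ref{T:convex variation}) then yields $f_d'(0) \leq f_d(1) - f_d(0)$, which rearranges to
\[
\Swan_Z'(\calE,z) + \ell(\calE,Z) \leq \Swan(\calE,Z') + \ell(\calE,Z),
\]
with equality exactly when $z$ is not a hidden turning point. Summing over all crossings produces the reduced inequality, and under the no-turning-points hypothesis all the intermediate inequalities become equalities, yielding the equality case.

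The principal obstacle is the normalization bookkeeping in the crossing-point calculation: three distinct valuations (natural $v_E$, the $v_r(z;x)$ of Definition~\ref{D:local variation3}, and the $v_{(1-s,s)}$ of Theorem~\ref{T:convex variation}) each come with their own value groups and their own choice of normalizing function ($t$ versus $tx$), and one must correctly reconcile these to arrive at the identity $f_d'(0) = \Swan_Z'(\calE,z) - \Swan(\calE,Z)$. Everything else is formal manipulation of intersection numbers and an application of results already established in the paper.
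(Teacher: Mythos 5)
Your proof is correct and follows the paper's argument closely: expand the left side via adjunction to isolate the term $\sum_{Z'\neq Z}(Z\cdot Z')(\Swan(\calE,Z')+\ell(\calE,Z))$, then bound each pointwise contribution on $Z$ by Theorem~\ref{T:monoton2} at smooth points of $D$ and by the convexity of $f_d$ from Proposition~\ref{P:hidden turning point} at the crossings, with the normalization calculation $f_d(s)=(1-s)\Swan_Z(\calE,z,x,s/(1-s))$ and $f_d'(0)=\Swan_Z'(\calE,z)-\Swan(\calE,Z)$ exactly as in the paper. One small improvement: you explicitly allow $Z\cap Z'$ to consist of $Z\cdot Z'$ transverse points and sum the crossing-point bound over all of them, whereas the paper implicitly assumes $|Z\cap Z'|\leq 1$ (which does not follow from strict normal crossings on a surface); your accounting handles the general case correctly.
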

\begin{proof}
Rewrite the left side as
\[
Z^2 \Swan(\calE,Z) + \ell(\calE,Z)(Z \cdot K_{\overline{X}} + Z^2)
+ \sum_{Z'} (\Swan(\calE,Z') + \ell(\calE,Z))(Z \cdot Z'),
\]
where $Z'$ runs over the components of $D$ other than $Z$.
By adjunction, $Z \cdot K_{\overline{X}} + Z^2 = 2g(Z) - 2$.

Since we assumed $D$ is a strict normal crossings divisor,
$Z \cap Z'$ never contains more than one point.
For each $z \in Z$ occurring as $Z \cap Z'$ for some $Z'$,
by Proposition~\ref{P:hidden turning point}, 
we have $\Swan(\calE,Z') \geq \Swan_Z'(\calE,z)$
with equality if $z$ fails to be a hidden turning point.
More explicitly, if we identify $Z,Z'$ with the divisors
$Z_1, Z_2$ of Proposition~\ref{P:hidden turning point},
then $f_d(s) = (1-s) \Swan(\calE,x,z, s/(1-s))$, so
$\Swan'_Z(\calE,z) = f_d'(0) + \Swan(\calE,Z) \leq f_d(1) - f_d(0)
+ \Swan(\calE,Z) = \Swan(\calE,Z')$.

For each $z \in Z$ not occurring as $Z \cap Z'$ for any $Z'$,
we have by Theorem~\ref{T:monoton2}
that $\Swan_Z'(\calE,z) + \ell(\calE,Z) \leq 0$, with equality
if $z$ fails to be an exposed turning point.
This yields the claimed results.
\end{proof}

\begin{theorem} \label{T:div subhar2}
Under Hypothesis~\ref{H:divisorial4},
for each component $Z$ of $D$,
\begin{equation} \label{eq:div subhar2}
Z \cdot (\Swan(\calE) + \ell(\calE, Z)(K_{\overline{X}} + D)) \geq 0.
\end{equation}
\end{theorem}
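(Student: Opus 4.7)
The assertion follows by combining Lemma~\ref{L:convert to intersections} with Theorem~\ref{T:div subhar}(c) via transitivity, with no new computation beyond matching definitions. First I would apply Lemma~\ref{L:convert to intersections} to the given component $Z$, producing
\[
Z \cdot (\Swan(\calE) + \ell(\calE, Z)(K_{\overline{X}} + D)) \geq (2g(Z)-2)\,\ell(\calE, Z) + Z^2 \Swan(\calE,Z) + \sum_{z \in Z} (\Swan_Z'(\calE,z) + \ell(\calE, Z)).
\]
It then suffices to show the right-hand side is nonnegative, i.e., after rearranging,
\[
\sum_{z \in Z} (\Swan_Z'(\calE,z) + \ell(\calE, Z)) \geq (2-2g(Z))\,\ell(\calE, Z) - Z^2 \Swan(\calE,Z).
\]

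This is exactly the conclusion of Theorem~\ref{T:div subhar}(c). To invoke that theorem, I would specialize Hypothesis~\ref{H:divisorial3} by taking $W = \emptyset$ (so $Y = \Xbar$) and $X = \Xbar \setminus D$. Under Hypothesis~\ref{H:divisorial4}, $\calE$ is an overconvergent isocrystal on $X$, which means it is an isocrystal on $X$ overconvergent along $Y \setminus X = D$, as required; the extra hypothesis $Z \subseteq Y$ of part (c) is automatic since $Y = \Xbar$. Adding the two inequalities together, the Euler-characteristic contributions $(2g(Z)-2)\ell(\calE, Z)$ and $(2-2g(Z))\ell(\calE, Z)$ cancel, as do $\pm Z^2\Swan(\calE, Z)$, leaving the claimed inequality.

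\textbf{Main obstacle.} The one place needing care is the compatibility of the invariants $\ell(\calE, Z)$ and $\Swan_Z'(\calE,z)$ as defined in the SNC setting of Hypothesis~\ref{H:divisorial4} with their analogues in the simpler Hypothesis~\ref{H:divisorial3} used by Theorem~\ref{T:div subhar}(c). This is not serious: with the choice $W = \emptyset$, every point of $Z$ lies in $Z \cap Y$ and so Definition~\ref{D:local variation3} applies directly, and at a crossing point $z \in Z \cap Z'$ a local equation of $Z'$ at $z$ is a permissible choice of the local coordinate $x$ there, since $Z'$ meets $Z$ transversely by the SNC assumption. Once this is checked, the two inequalities combine to yield the theorem immediately.
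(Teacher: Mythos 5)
Your proposal is correct and takes essentially the same route as the paper, which simply cites Lemma~\ref{L:convert to intersections} together with Theorem~\ref{T:div subhar}. The specialization $W=\emptyset$, $Y=\Xbar$, $X=\Xbar\setminus D$ and the transitivity computation you spell out are exactly what the paper's one-line proof leaves implicit.
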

\begin{proof} 
This holds by combining Lemma~\ref{L:convert to intersections}
with Theorem~\ref{T:div subhar}.
\end{proof}

\begin{defn}
We say $\calE$ is \emph{clean} on $\Xbar$ if it has no turning points,
either hidden or exposed.
\end{defn}

\begin{remark}
It is not immediately obvious that one can always blow up $\Xbar$
in order to make $\calE$ clean. One would like to argue that the total
multiplicity of the turning points never increases and can be forced to
decrease by a certain series of point blowups. However, one may be forced
to temporarily increase the total multiplicity by blowing up an exposed
turning point along a divisor $Z$ with $\ell(\calE,Z) > 0$.
(This does not happen with Kato's notion of cleanness from \cite{kato},
which is preserved by blowing up. Liang Xiao points out that in fact
our definition of cleanness, when restricted to sheaves of rank 1,
is more restrictive than Kato's definition.)
\end{remark}

\begin{question}
If $\calE$ is clean, can one assert an Euler characteristic formula
for $\calE$ analogous to the Grothendieck-Ogg-Shafarevich formula
for curves? (For the $p$-adic version for curves, see for instance
\cite[Theorem~4.3.1]{kedlaya-weilii}.)
Such a formula would involve not only contributions from the components
of $D$, but also from the pairwise intersections of components.
\end{question}

\begin{question}
If $\calE = f_* \calO_Y$ for $f: Y \to X$ a finite \'etale morphism,
and $\calE$ is clean, can one form a finite cover $\overline{f}: \Ybar \to
\Xbar$ extending $f$ such that $\Ybar$ has only
mild singularities? For instance, if $f$ is Galois
and abelian, it should be possible to ensure that $\Ybar$ has only 
quotient singularities; something along these lines has been established
by Kato \cite{kato}, although some work may be needed to compare our 
construction with his.
\end{question}

\section{Results for lisse $\ell$-adic sheaves}
\label{sec:l-adic}

In this section, we describe how to define differential ramification
breaks and Swan conductors
for lisse $\ell$-adic \'etale sheaves, and how some of the variational results
in the $p$-adic case may be carried over.
Throughout this section, retain
Hypotheses~\ref{H:etale1} and~\ref{H:etale2}.

\setcounter{theorem}{0}
\begin{hypothesis}
Throughout this section,
let $\ell$ be a prime different from $p$, and let $E$ be a finite 
extension of $\QQ_\ell$. 
\end{hypothesis}

\subsection{Defining the ramification breaks}

\begin{defn}
Let $v$ be a divisorial valuation $v$ on $k(X)$ over $k$,
and let $I_v$ be an inertia subgroup of $v$.
The \emph{wild inertia subgroup} $W_v$ of $I_v$ is
the absolute Galois group of the maximal tamely
ramified extension of $k(X)_v$. The group $W_v$ is a pro-$p$-group,
whereas the quotient $I_v/W_v$ is congruent to
$\prod_{\ell \neq p} \ZZ_\ell$.
\end{defn}

\begin{defn} \label{D:ell-adic}
Let $\rho: \pi_1(X, \overline{x}) \to \GL(V)$
be a continuous homomorphism for $V = V(\rho)$ a finite-dimensional
$E$-vector space, corresponding to a lisse $E$-sheaf $\calE$ on $X$.
Filter the inertia group $I_v$ as in \cite[Definition~3.5.12]{kedlaya-swan1}.
For $\rho$ irreducible, define the \emph{differential highest break}
$b(\rho, v)$ of $\rho$ along $v$ to be the maximal $r$ such that
$I_v^r \not\subset \ker(\rho)$.
For general $\rho$, let $\rho_1,\dots,\rho_n$ be the irreducible
constituents of $\rho$, and define the 
\emph{differential ramification breaks}
$b_1(\rho,v) \geq \dots \geq b_d(\rho,v)$ 
(or $b_1(\calE,v) \geq \dots \geq b_d(\calE,v)$)
of $\rho$ to be the elements
of the multiset consisting of $b(\rho_i,v)$ with multiplicity $\dim(\rho_i)$.
Define the \emph{differential Swan conductor}
$\Swan(\rho, v)$ 
(or $\Swan(\calE, v)$)
of $\rho$ along $v$ to be the sum 
$\sum_{i=1}^d b_i(\rho,v)$.

As in Definition~\ref{D:normalization}, the previous definition
gives the differential ramification breaks and differential Swan 
conductor in their \emph{natural normalization}.
If desired, we may instead normalize with respect
to any $t \in k(X)$ for which $v(t) \neq 0$.
\end{defn}

Unlike in the $p$-adic case, the differential ramification breaks of
an $\ell$-adic representation of $\pi_1(X, \overline{x})$ are not obtained
by first constructing a corresponding isocrystal. Consequently, it is not
immediate that variational
properties of differential ramification breaks of representations
can be transferred to the $\ell$-adic case. The remainder of this
section is devoted to making such transfers; we start with a few 
useful remarks.

\begin{remark} \label{R:pass to reduction}
With notation as in Definition~\ref{D:ell-adic},
choose a $\rho$-stable $\gotho_E$-lattice $T$ of $V$,
and let $\overline{\rho}: \pi_1(X, \overline{x}) \to \GL(T/\gothm_E T)$
be the resulting residual representation. 
Then the image in $\GL(T)$ of the
pro-$p$-group $W_v$ has trivial intersection with the pro-$\ell$-group
$\ker(\GL(T) \to \GL(T/\gothm_E T))$, and so injects 
into $\GL(T/\gothm_E T)$. 
Consequently, if we use the same procedure as in Definition~\ref{D:ell-adic}
to define the differential ramification breaks and 
Swan conductor of a mod $\ell$ representation
of $\pi_1(X, \overline{x})$, then these quantities are the same
for a $\gotho_E$-representation as for its mod $\ell$ reduction.
\end{remark}

\begin{remark} \label{R:etale lift}
In Remark~\ref{R:pass to reduction}, if the representation
$\overline{\rho}$ lifts to a \emph{discrete} representation
$\pi_1(X, \overline{x}) \to \GL(T)$
(i.e., a representation with open kernel), then we can
generate an overconvergent $F$-isocrystal which computes the 
differential ramification breaks of $\overline{\rho}$,
using Theorem~\ref{T:crew functor2}.
\end{remark}

\subsection{Integral polyhedrality}

In this section, we establish an analogue of Theorem~\ref{T:convex variation}
for $\ell$-adic sheaves.
\begin{theorem} \label{T:convex variation2}
Under Hypothesis~\ref{H:variation}, let 
$\calE$ be a lisse \'etale $E$-sheaf on $X$.
For $i=1, \dots, d$ and $r \in T \cap \QQ^n$, let
$b_i(\calE,r)$ denote the $i$-th largest differential ramification break
of $\calE$ along $v_r$, normalized with respect to $t_1 \cdots t_n$.
Put $B_i(\calE,r) = b_1(\calE,r) + \cdots + b_i(\calE,r)$.
Then the functions $d! B_i(\calE,r)$ and $B_d(\calE,r)$ are
continuous, convex, and integral polyhedral on $T$.
\end{theorem}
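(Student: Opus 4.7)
The plan is to reduce Theorem~\ref{T:convex variation2} to its $p$-adic counterpart Theorem~\ref{T:convex variation} by passing to the mod-$\ell$ reduction of the representation and then realizing the resulting finite-image representation via a discrete $p$-adic representation whose associated overconvergent unit-root $F$-isocrystal has matching differential ramification breaks.

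First I would invoke Remark~\ref{R:pass to reduction} to replace the underlying $\gotho_E$-representation $\rho$ by its residual representation $\bar\rho$, without affecting any of the $b_i(\calE,v_r)$. Then $\bar\rho$ factors through a finite quotient $G$ of $\pi_1(X,\overline{x})$, so it is discrete. The key observation is that for each divisorial valuation $v_r$ appearing in the statement, the ramification subgroup $I_{v_r}^s$ for $s > 0$ lies in the pro-$p$ wild inertia, so its image in $G$ is an $\ell'$-group. By Brauer's theorem on modular characters, the function $s \mapsto \dim (V/\gothm_E V)^{I_{v_r}^s}$ is determined by the Brauer character of $\bar\rho$ on $\ell'$-elements, and this Brauer character is the mod-$\ell$ reduction of the character of some honest characteristic zero representation $\sigma$ of $G$; the local structure of the ramification image as a $p$-group extended by a tame cyclic group allows one to promote Brauer lifting to a non-virtual lift, possibly at the cost of enlarging the dimension by some zero-break summands.

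Realizing $\sigma$ as a discrete $p$-adic representation of $\pi_1(X,\overline{x})$ valued in $\GL_N(\gotho_{K^\phi})$ (as in Remark~\ref{R:etale lift}) and invoking Theorem~\ref{T:crew functor2}, one obtains an overconvergent unit-root $F$-isocrystal $\calF$ on $X$ to which Theorem~\ref{T:convex variation} applies: the functions $d_\calF! B_i(\calF,r)$ and $B_{d_\calF}(\calF,r)$ are continuous, convex, and integral polyhedral on $T$. Because padding the break multiset of $\bar\rho$ with zeros does not change $B_i$ up to the original rank $d$, one concludes $B_i(\calE,r) = B_i(\calF,r)$ for $i=1,\dots,d$, transferring the three properties to $\calE$.

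The principal obstacle is Step~2: producing a genuine (not merely virtual) $p$-adic lift $\sigma$ whose restriction to every $I_{v_r}$ reproduces the Brauer character of $\bar\rho$. Working with a virtual lift $[\sigma_1]-[\sigma_2]$ would yield integrality and continuity of the $B_i$, but differences of convex functions are not generally convex, so the convexity assertion would fail. The argument must therefore exploit the explicit structure of the image of inertia in $G$ (an extension of a $p$-group by a prime-to-$p$ cyclic group, for which Maschke and Clifford theory apply on each factor) to glue local honest lifts into a single global discrete $p$-adic representation of $\pi_1(X,\overline{x})$ with the required fixed-point dimensions at every rational $r \in T$; the relevant convexity for $\calE$ then descends directly from that for the $F$-isocrystal $\calF$.
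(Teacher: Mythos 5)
Your first step — passing to the residual representation $\overline{\rho}$ via Remark~\ref{R:pass to reduction} — matches the paper exactly. The trouble begins at your Step~2, which you yourself flag as the principal obstacle and then leave essentially unresolved. Brauer lifting is genuinely virtual for a general finite group $G$, and a modular representation of $G$ over $\FF$ need not lift to $\gotho_E$; your proposal to ``glue local honest lifts'' using the structure of inertia as a $p$-by-cyclic extension is not an argument but a hope, and it is not clear how local lifts on the various $I_{v_r}$ (which vary with $r$ and whose images need not be conjugate) could be assembled into a single representation of the global group $G$. Convexity is a genuinely pointwise-ordered statement, so there is no way to salvage it from a virtual lift, as you note; but that means your argument, as written, has a real hole precisely where the work has to be done.

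The paper avoids this entirely with a $p$-Sylow restriction/induction trick that you do not mention. Let $G$ be the image of $\pi_1(X,\overline{x})$ in $\GL_d(\FF)$ and let $H \leq G$ be a $p$-Sylow subgroup, corresponding to a finite \'etale cover $f\colon Y \to X$ of degree $m = [G:H]$ prime to $p$. Since $|H|$ is prime to $\ell$, the modular representation $\Res^G_H\overline{\rho}$ lifts to $\gotho_E$ (Maschke), hence so does $\tau = \Ind^G_H\Res^G_H\overline{\rho}$, which corresponds to $\calF = f_*f^*\calE$. Because $f$ has degree prime to $p$ and the differential ramification breaks depend only on the action of wild inertia (whose image is a $p$-group, hence conjugate into $H$), the break multiset of $\calF$ along each $v_r$ is that of $\calE$ with every element repeated $m$ times. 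One then applies Remark~\ref{R:etale lift} and Theorem~\ref{T:convex variation} to the overconvergent unit-root $F$-isocrystal attached to the lift of $\tau$. This sidesteps the lifting problem for $\overline{\rho}$ itself rather than solving it.

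There is also a smaller gap at the end: even granting your $B_i(\calE,r)=B_i(\calF,r)$ for $i\leq d$, Theorem~\ref{T:convex variation} only gives integral polyhedrality of $d_\calF!\,B_i(\calF,r)$, not of $d!\,B_i(\calE,r)$ with the original rank $d$. The paper closes this by observing that $d!\,B_i(\calE,r)$ and $B_d(\calE,r)$ are continuous, convex, and polyhedral from the isocrystal argument, and separately take values in $\ZZ + \ZZ r_1 + \cdots + \ZZ r_n$ at rational $r$ by the Hasse-Arf property \cite[Theorem~2.8.2]{kedlaya-swan1}; Theorem~\ref{T:integral polyhedral} then upgrades polyhedrality to integral polyhedrality. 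You would need this final step regardless of how Step~2 is repaired.
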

\begin{proof}
By Remark~\ref{R:pass to reduction}, we may replace $\calE$ by a
locally constant \'etale $\FF$-sheaf, where $\FF$ is the residue field
of $E$, and prove the same result.
Let $G$ be the image of $\pi_1(X, \overline{x})$ in $\GL_d(\FF)$, and let
$H$ be a $p$-Sylow subgroup of $G$. Let $f:Y \to X$ be a finite \'etale cover 
such that for some geometric point $\overline{y}$ of $Y$ over $\overline{x}$,
$\pi_1(Y, \overline{y}) = \overline{\rho}^{-1}(H)$.
Put $\calF = f_* f^* \calE$, which corresponds to the representation
$\tau = \Ind^G_H \Res^G_H \overline{\rho}$. 
Put $m = [G:H]$. Then for each divisor $Z$ on $X$,
\[
b_{(m-1)i + 1}(\calF, Z) = \cdots = b_{mi}(\calF,Z) = b_i(\calE,Z)
\]
since the differential ramification breaks only depend on the action of $H$.
On the other hand, $\Res^G_H \overline{\rho}$
is a mod $\ell$ representation of the group $H$ whose order is prime to $\ell$.
It is thus liftable to $\gotho_E$, as then is its induction $\tau$.
We may thus apply Remark~\ref{R:etale lift} to deduce from
Theorem~\ref{T:convex variation} that
$m d! B_i(\calE,r) = d! B_{mi}(\calF,r)$ and $m B_d(\calE,r) = B_{md}(\calF,r)$
are continuous, convex, and integral polyhedral.

To conclude, note that on one hand, $d! B_i(\calE,r)$ and $B_d(\calE,r)$
are continuous, convex, and polyhedral by the previous paragraph. On the
other hand, for each $r \in T \cap \QQ^n$, $d! B_i(\calE,r)$ and $B_d(\calE,r)$
take values in $\ZZ + \ZZ r_1 + \cdots + \ZZ r_n$ by the Hasse-Arf
property of differential Swan conductors \cite[Theorem~2.8.2]{kedlaya-swan1}.
Hence by Theorem~\ref{T:integral polyhedral} (or an elementary argument),
$d! B_i(\calE,r)$ and $B_d(\calE,r)$ are integral polyhedral.
\end{proof}

\begin{remark}
Although the above argument suffices for our purposes, it is worth mentioning
another lifting construction that may occasionally be useful.
Let $\calE$ be a locally constant \'etale $\FF$-sheaf on  $X$,
where $\FF$ is the residue field of $E$.
Let $G$ be the image of $\pi_1(X, \overline{x})$ in $\GL_d(\FF)$.
For $S$ any ring, let $R_S(G)$ denote the Grothendieck ring of finite 
$S[G]$-modules. Then the canonical map $R_{\gotho_E}(G) \to R_\FF(G)$
is surjective by \cite[Chapter~16, Theorem~33]{serre-rep}, so the given
$\FF$-representation of $G$ lifts to a virtual $\gotho_E$-representation
of $G$. We may then convert each factor of the virtual representation into
an overconvergent $F$-isocrystal as in Remark~\ref{R:etale lift}.
Unfortunately, since this representation is only virtual, one cannot use this
argument to deduce convexity or polyhedrality.
\end{remark}

\subsection{Subharmonicity and monotonicity}

We may also obtain the subharmonicity and monotonicity results
for surfaces, by using  the same technique as in Theorem~\ref{T:convex variation2}
to reduce to Theorems~\ref{T:div subhar} and~\ref{T:monoton2}, respectively.
(Initially one only proves $\ell(\calE,Z) 
\in \QQ \cap [0,d]$ because of the division
by $m$ in the argument of Theorem~\ref{T:convex variation2}, but the integral
polyhedrality of Theorem~\ref{T:convex variation2} forces $\ell(\calE,Z)
\in \ZZ$,
so there is no problem.)
\begin{theorem} \label{T:div subhar ell}
Assume that $k = k_0$ is algebraically closed.
Let $\Xbar$ be a smooth irreducible projective surface over $k$,
let $Z$ be a smooth divisor on $\Xbar$,
and let $v_0$ be the divisorial valuation on $k(\Xbar)$ measuring
order of vanishing along $Z$.
Let $X$ be an open dense subscheme of $\Xbar$, and let
$\calE$ be a lisse \'etale $E$-sheaf on $X$.
Define $b_j(\calE,z,x,r)$ for $j=1,\dots,d$
and $\Swan_Z(\calE,z,x,r)$ as in Definition~\ref{D:local variation3}.
\begin{enumerate}
\item[(a)]
For each $z \in Z$, the functions
$b_j(\calE,z,x,r)$ for $j=1,\dots,d$
and $\Swan_Z(\calE,z,x,r)$ are affine in a neighborhood
of $r=0$.
\item[(b)]
Let $\Swan_Z'(\calE,z)$ be the right slope of $\Swan_Z(\calE,z,x,r)$ at $r=0$.
Then there exists $\ell(\calE,Z) \in \{0, 1,\dots,d\}$ such that
$\Swan_Z'(\calE,z) = -\ell(\calE,Z)$ 
for all but finitely many $z \in Z$.
\item[(c)]
With notation as in (b), we have
\[
\sum_{z \in Z} (\Swan_Z'(\calE,z) + \ell(\calE,Z)) \geq (2-2g(Z))\ell(\calE,Z)
- Z^2 \Swan(\calE,Z),
\]
where $g(Z)$ denotes the genus of $Z$, and $Z^2$ denotes the
self-intersection of $Z$ on $\Xbar$.
\item[(d)]
If $z$ is a smooth point of $Z \cup (\Xbar \setminus X)$, 
then $\Swan'_Z(\calE,z) + \ell(\calE,Z) \leq 0$.
\end{enumerate}
\end{theorem}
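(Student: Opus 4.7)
The plan is to mimic the reduction used in Theorem~\ref{T:convex variation2}, transferring Theorems~\ref{T:div subhar} and~\ref{T:monoton2} from the $p$-adic side via a Sylow--induction trick. By Remark~\ref{R:pass to reduction}, I would first replace $\calE$ by its mod-$\ell$ reduction, which leaves all quantities in (a)--(d) unchanged. Let $G$ be the image in $\GL_d(\FF)$ of the resulting representation $\overline{\rho}$, let $H$ be a $p$-Sylow subgroup of $G$, and let $f: Y \to X$ be the connected finite \'etale cover with $\pi_1(Y, \overline{y}) = \overline{\rho}^{-1}(H)$. Set $m = [G:H]$ and $\calF = f_* f^* \calE$, corresponding to $\tau = \Ind^G_H \Res^G_H \overline{\rho}$. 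Since $m$ is coprime to $\ell$, the $\FF$-representation $\Res^G_H \overline{\rho}$ of the prime-to-$\ell$ group $H$ lifts to an $\gotho_E$-representation, and hence so does its induction $\tau$; by Remark~\ref{R:etale lift} this lift is realized by an overconvergent $F$-isocrystal $\widetilde{\calF}$ on $X$ whose differential ramification invariants agree with those of $\calF$.

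Because positive differential ramification breaks depend only on the action of wild inertia, which is a $p$-group and hence (up to conjugation absorbed by the choice of basepoint) lies in $H$, for every relevant divisorial valuation $v$ one has
\[
b_{(j-1)m+1}(\calF, v) = \cdots = b_{jm}(\calF, v) = b_j(\calE, v) \qquad (j=1,\dots,d),
\]
and likewise for the functions $b_j(\cdot, z, x, r)$ and $\Swan_Z(\cdot, z, x, r)$; in particular $\Swan(\calF, Z) = m \Swan(\calE, Z)$ and $\Swan'_Z(\calF, z) = m \Swan'_Z(\calE, z)$. Applying Theorem~\ref{T:div subhar}(a) to $\widetilde{\calF}$ and dividing by $m$ yields (a); applying Theorem~\ref{T:div subhar}(c) to $\widetilde{\calF}$ gives (c) after dividing by $m$, since $g(Z)$ and $Z^2$ do not depend on the sheaf and $\ell(\calF, Z) = m \ell(\calE, Z)$; applying Theorem~\ref{T:monoton2} with $i = d$ to $\widetilde{\calF}$ and dividing by $m$ yields (d).

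The one remaining issue, which constitutes the main obstacle, is part (b): the construction initially only shows $\Swan'_Z(\calE, z) = -\ell(\calF, Z)/m$ for all but finitely many $z$, so that $\ell(\calE, Z) := \ell(\calF, Z)/m$ is known only to lie in $\QQ \cap [0, d]$ rather than in $\{0, 1, \dots, d\}$. I would resolve this by invoking the integral polyhedrality half of Theorem~\ref{T:convex variation2}: working locally at $z$ with $Z$ together with a transverse smooth divisor through $z$, the function $B_d(\calE, r)$ is integral polyhedral in two variables, so its right slope at the vertex $r = 0$---which computes $\Swan'_Z(\calE, z)$ after the appropriate normalization---must be an integer. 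This forces $\ell(\calE, Z) \in \ZZ$ and completes the proof of (b).
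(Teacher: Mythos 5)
Your proposal is correct and follows exactly the paper's intended route: reduce mod $\ell$, pass to the $p$-Sylow induced sheaf $\calF$, lift to an $\gotho_E$-representation with open kernel, invoke Theorems~\ref{T:div subhar} and~\ref{T:monoton2} for the associated overconvergent $F$-isocrystal, and divide by $m$, with the integral polyhedrality from Theorem~\ref{T:convex variation2} upgrading $\ell(\calE,Z)$ from $\QQ\cap[0,d]$ to an integer. The only slip is attributing liftability of $\Res^G_H\overline\rho$ to ``$m$ coprime to $\ell$'' rather than to $|H|$ being prime to $\ell$ (as you in fact note a clause later), which does not affect the argument.
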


\begin{theorem} \label{T:div subhar2 ell}
With hypotheses as in Theorem~\ref{T:div subhar ell},
for each component $Z$ of $D$,
\[
Z \cdot (\Swan(\calE) + \ell(\calE, Z)(K_{\overline{X}} + D)) \geq 0.
\]
\end{theorem}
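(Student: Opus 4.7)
The plan is to mimic the proof of Theorem~\ref{T:div subhar2} in the $p$-adic setting, systematically substituting the $\ell$-adic versions of the inputs. Recall that in the $p$-adic case, Theorem~\ref{T:div subhar2} was obtained by combining two ingredients: the subharmonicity/monotonicity result (Theorem~\ref{T:div subhar} together with Theorem~\ref{T:monoton2}) and the purely intersection-theoretic bookkeeping of Lemma~\ref{L:convert to intersections}. Both ingredients transfer to the $\ell$-adic case, so the structure of the argument carries over verbatim.

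First I would write
\[
Z \cdot (\Swan(\calE) + \ell(\calE, Z)(K_{\overline{X}} + D)) = Z^2 \Swan(\calE, Z) + \ell(\calE,Z)(Z \cdot K_{\overline{X}} + Z^2) + \sum_{Z'} (\Swan(\calE, Z') + \ell(\calE,Z))(Z \cdot Z'),
\]
where $Z'$ ranges over components of $D$ other than $Z$. By adjunction, $Z \cdot K_{\overline{X}} + Z^2 = 2g(Z) - 2$, and because $D$ is a strict normal crossings divisor, each $Z \cap Z'$ is either empty or a single point. The next step is to bound each summand $\Swan(\calE,Z')$ below by $\Swan'_Z(\calE,z)$ for $z = Z \cap Z'$. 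This uses convexity: Theorem~\ref{T:convex variation2} provides the $\ell$-adic analogue of Theorem~\ref{T:convex variation}, from which an exact translation of Proposition~\ref{P:hidden turning point} yields $\Swan'_Z(\calE,z) \leq \Swan(\calE,Z') - \Swan(\calE,Z) + \Swan(\calE,Z)$, i.e., the required inequality. For each $z \in Z$ that is a smooth point of $Z \cup (\overline{X} \setminus X)$ (in particular, those $z$ not lying on any other component), Theorem~\ref{T:div subhar ell}(d) provides $\Swan'_Z(\calE, z) + \ell(\calE, Z) \leq 0$.

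Combining these pointwise inequalities as in Lemma~\ref{L:convert to intersections} gives
\[
Z \cdot (\Swan(\calE) + \ell(\calE, Z)(K_{\overline{X}} + D)) \geq (2g(Z) - 2) \ell(\calE, Z) + Z^2 \Swan(\calE, Z) + \sum_{z \in Z}(\Swan'_Z(\calE, z) + \ell(\calE, Z)).
\]
Finally, Theorem~\ref{T:div subhar ell}(c) gives
\[
\sum_{z \in Z}(\Swan'_Z(\calE, z) + \ell(\calE, Z)) \geq (2 - 2g(Z))\ell(\calE, Z) - Z^2 \Swan(\calE, Z),
\]
and substituting this into the previous displayed inequality causes the $(2g(Z)-2)\ell(\calE,Z)$ and $Z^2 \Swan(\calE,Z)$ contributions to cancel exactly, leaving the desired nonnegativity.

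The only real check is that the strict normal crossings hypothesis of the underlying framework is in force (so that Lemma~\ref{L:convert to intersections}'s bookkeeping applies) and that one is entitled to invoke Theorem~\ref{T:div subhar ell}(d) at every $z$ outside the pairwise intersections; this is immediate, since such $z$ is either a smooth point of $Z$ in $X$ or a smooth point of $Z$ on the boundary $\overline{X} \setminus X$. There is no genuine obstacle beyond keeping track of the intersection-theoretic algebra; all the substantive analytic content is already encapsulated in Theorem~\ref{T:convex variation2} and Theorem~\ref{T:div subhar ell}, which were themselves deduced from their $p$-adic counterparts by the induction-and-Brauer-lifting trick of Theorem~\ref{T:convex variation2}.
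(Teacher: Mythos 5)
Your argument is correct and is exactly the proof the paper has in mind: the paper simply says the result "follows from Theorem~\ref{T:div subhar ell} by the same argument as in Theorem~\ref{T:div subhar2}," i.e., by combining the intersection-theoretic bookkeeping of Lemma~\ref{L:convert to intersections} (which transfers to the $\ell$-adic setting via Theorem~\ref{T:convex variation2}) with Theorem~\ref{T:div subhar ell}. Your unfolding of the cancellation between the $(2g(Z)-2)\ell(\calE,Z)$ and $Z^2\Swan(\calE,Z)$ terms matches the intended computation.
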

\begin{proof} 
This follows from Theorem~\ref{T:div subhar ell} by the same
argument as in Theorem~\ref{T:div subhar2}.
\end{proof}

\begin{remark}
It should be possible to use Theorem~\ref{T:div subhar ell}
to give an independent
derivation of the semicontinuity theorem in \'etale cohomology
\cite{laumon}. We leave this as an exercise for the interested reader.
\end{remark}

\end{document}